\DeclareSymbolFont{bbold}{U}{bbold}{m}{n}
\DeclareSymbolFontAlphabet{\mathbbold}{bbold}
\newcommand{\N}{\mathbb{N}}
\newcommand{\R}{\mathbb{R}}
\newcommand{\1}{\mathbbold{1}}
\renewcommand{\c}{{\rm c}}
\renewcommand{\epsilon}{\varepsilon}
\DeclareMathAccent{\Circ}{\mathalpha}{operators}{"17}
\newcommand{\interior}[1]{\operatorname{\Circ{#1}}}
\DeclareMathOperator{\spt}{spt}
\DeclareMathOperator{\sym}{sym}
\DeclareMathOperator{\trf}{trf}
\DeclareMathOperator{\dive}{div}
\DeclareMathOperator{\curl}{curl}
\DeclareMathOperator{\grad}{grad}
\DeclareMathOperator{\Grad}{Grad}
\DeclareMathOperator{\Dive}{Div}
\DeclareMathOperator{\Curl}{Curl}
\DeclareMathOperator{\He}{He}
\DeclareMathOperator{\kar}{ker}
\DeclareMathOperator{\rge}{ran}
\DeclareMathOperator{\dom}{dom}
\renewcommand{\Re}{\operatorname{Re}}
\DeclareMathOperator{\lin}{lin}
\let\eps\varepsilon
\let\phi\varphi
\let\leq\leqslant
\let\ge\geqslant
\let\geq\geqslant
\def\@row#1,{#1\@ifnextchar;{\@gobble}{&\@row}}
\def\@matrix{%
    \expandafter\@row\my@arg,;%
    \@ifnextchar({\\ \get@in@paren{\@matrix}}{\after@matrix}%
    }
\def\matrixtype#1#2#3{%
    \ifmmode\def\after@matrix{\end{#2}\right#3}%
    \else\def\after@matrix{\end{#2}\right#3$}$\fi
    \left#1\begin{#2}\get@in@paren{\@matrix}%
    }
\def\@column#1,{#1\@ifnextchar;{\@gobble}{\\ \@column}}
\newcommand\vect{}
\def\svect(#1){\left(\begin{smallmatrix}\@column#1,;\end{smallmatrix}\right)}
\def\vect{\get@in@paren{\@vect}}
\def\@vect{\left(\begin{matrix}\expandafter\@column\my@arg,;\end{matrix}\right)}
\def\get@in@paren#1({\def\my@arg{}\def\my@rest{}\def\after@get{#1}\get@arg}
\let\e@a\expandafter
\def\get@arg#1){\e@a\kl@test\my@rest#1(;}
\def\kl@test#1(#2;{\e@a\def\e@a\my@arg\e@a{\my@arg#1}%
                   \ifx:#2:\let\my@exec\after@get
                   \else\let\my@exec\get@arg
                        \e@a\def\e@a\my@arg\e@a{\my@arg(}%
                        \def@rest#2;%
                   \fi\my@exec}
\def\def@rest#1(;{\def\my@rest{#1\kl@zu}}
\def\kl@zu{)}
\newcommand\MyPairedDelimiter{%
  \@ifstar{\My@Paired@Delimiter{{}}}
          {\My@Paired@Delimiter{}}%
}
\newcommand\My@Paired@Delimiter[4]{%
  \newcommand#2{%
    \@ifstar{\start@PD{#1}{\delimitershortfall=-1sp}{#3}{#4}}
            {\start@PD{#1}{}{#3}{#4}}%
  }%
}
\newcommand\start@PD[5]{%
  #1\mathopen{\mathpalette\put@delim@helper{\put@delim{#2}{#3}{.}{#5}}}%
  #5%
  \mathclose{\mathpalette\put@delim@helper{\put@delim{#2}{.}{#4}{#5}}}%
}
\newcommand\put@delim@helper[2]{%
  \hbox{$\m@th\nulldelimiterspace=0pt #2#1$}%
}
\newcommand\put@delim[5]{%
  \setbox\z@\hbox{$\m@th#5{#4}$}%
  \setbox\tw@\null
  \ht\tw@\ht\z@ \dp\tw@\dp\z@
  #1#5%
  \left#2\box\tw@\right#3%
}
\MyPairedDelimiter*{\abs}{\lvert}{\rvert}
\MyPairedDelimiter*{\norm}{\lVert}{\rVert}
\MyPairedDelimiter{\set}{\{}{\}}
\theoremstyle{plain} 
\newtheorem{theorem}{Theorem}[section]
\newtheorem{corollary}[theorem]{Corollary}
\newtheorem{lemma}[theorem]{Lemma}
\newtheorem{proposition}[theorem]{Proposition}
\theoremstyle{definition}
\newtheorem{example}[theorem]{Example}
\newtheorem*{definition}{Definition}
\newtheorem{remark}[theorem]{Remark}
\begin{document}

\title{Nonlocal $H$-convergence}

\author{Marcus Waurick}

\date{}

\maketitle

\begin{abstract} We introduce the concept of nonlocal $H$-convergence. For this we employ the theory of abstract closed complexes of operators in Hilbert spaces. We show uniqueness of the nonlocal $H$-limit as well as a corresponding compactness result. Moreover, we provide a characterisation of the introduced concept, which implies that local and nonlocal $H$-convergence coincide for multiplication operators. We provide applications to both nonlocal and nonperiodic fully time-dependent 3D Maxwell's equations on rough domains. The material law for Maxwell's equations may also rapidly oscillate between eddy current type approximations and their hyperbolic non-approximated counter parts. Applications to models in nonlocal response theory used in quantum theory and the description of meta-materials, to fourth order elliptic problems as well as to homogenisation problems on Riemannian manifolds are provided.
\end{abstract}

Keywords: homogenisation, $H$-convergence, nonlocal coefficients, complexes of operators, evolutionary equations, equations of mixed type, Maxwell's equations, plate equation, partial differential equations on manifolds

MSC 2010: Primary: 35B27, 74Q05  Secondary: 74Q10, 35J58, 35L04, 35M33, 35Q61

\medmuskip=4mu plus 2mu minus 3mu
\thickmuskip=5mu plus 3mu minus 1mu
\belowdisplayshortskip=9pt plus 3pt minus 5pt

\section{Introduction}\label{s:int}

The theory of homogenisation studies the asymptotic properties of heterogeneous materials with a macroscopic and a microscopic scale for the fictitious limit of the ratio of microscopic over macroscopic scale tending to $0$. When one is to model this problem mathematically, the mentioned ratio is introduced with a parameter say $\epsilon=1/n$, $n\in\mathbb{N}$. For any $n\in\mathbb{N}$ one is then given a partial differential equation, e.g.,
\begin{equation}\label{eq:stex}
   -\dive a_n \grad u_n = f
\end{equation}
for fixed $f\in H^{-1}(\Omega)$, $\Omega\subseteq \mathbb{R}^d$ open and bounded, $u_n\in H_0^1(\Omega)$ and $a_n \in L(L^2(\Omega)^d)$ (i.e., a bounded linear operator from $L^2(\Omega)^d$ into $L^2(\Omega)^d$) satisfying $\Re \langle a_n \phi,\phi\rangle\geq \alpha\langle \phi,\phi\rangle$ for all $n\in\mathbb{N}$ and $\phi\in L^2(\Omega)^d$ and some $\alpha>0$. Then one addresses the question, whether the (uniquely) determined sequence of solutions $(u_n)_n$ has a (weak) limit. Assuming that $u_n\rightharpoonup u$ weakly in $H_0^1(\Omega)$, one furthermore asks, whether there exists $a\in L(L^2(\Omega)^d)$ (independent of $f$) such that
\begin{equation}\label{eq:stexl}
    -\dive a \grad u = f.
\end{equation}

There is a vast amount of literature concerning this or related subjects. We shall only refer to the standard references \cite{Bensoussan1978,Jikov1994,Cioranescu1999,Tartar2009} for some introductory material. In almost all discussions of the subject, the attention is restricted to local coefficient sequences $(a_n)_n$ (in this sense the approach in \cite{Fish2002} is still considered to be \emph{local}), that is, one focusses on multiplication operators being elements of the set 
\begin{multline}\label{eq:Mab}
   M(\alpha,\beta,\Omega)\coloneqq \{ a \in L^\infty(\Omega)^{d\times d}; \forall \xi \in \mathbb{C}^d: \alpha \|\xi\|^2\leq \Re \langle a(x)\xi,\xi\rangle,\\  \Re \langle a(x)^{-1}\xi,\xi\rangle \geq \tfrac{1}{\beta}\|\xi\|^2\text{ for a.e.~}x\in\Omega\}
\end{multline}
for some $0<\alpha<\beta$.

Particularly focussing on the model problem \eqref{eq:stex}, Tartar and Murat have introduced and studied the notion of $H$-convergence (see also \cite{Murat1997}), which we call local $H$-convergence in order to avoid possible misunderstandings later on. The notion reads as follows.

\begin{definition}[local $H$-convergence, {{\cite[Section 5]{Murat1997}, \cite[Definition 6.4]{Tartar2009}}}]
  A sequence $(a_n)_n$ in $M(\alpha,\beta,\Omega)$ is said to be \emph{locally $H$-convergent} to $a\in M(\alpha,\beta,\Omega)$, if the following conditions hold: For all $f\in H^{-1}(\Omega)=H_0^1(\Omega)^*$ and $(u_n)_n$ in $H_0^1(\Omega)$ given by \eqref{eq:stex}, we obtain
  \begin{itemize}
   \item $(u_n)_n$ weakly converges in $H_0^1(\Omega)$ to some $u\in H_0^1(\Omega)$,
   \item $ a_n\grad u_n \rightharpoonup a\grad u$,
   \item $-\dive a \grad u =f$.
  \end{itemize}
  $a$ is called \emph{local $H$-limit} of $(a_n)_n$. 
\end{definition}

Some by now standard properties of local $H$-convergence have been shown by their inventors. For instance, it is possible to associate a topology $\tau_{\textnormal{loc}H}$ with the above notion of local $H$-convergence (see \cite[p 82]{Tartar2009}). We shall state a remarkable property of this topology:
\begin{theorem}[see e.g.~{{\cite[Theorem 6.5]{Tartar2009}}}]\label{thm:TM} $(M(\alpha,\beta,\Omega),\tau_{\textnormal{loc}H})$ is a metrisable and (sequentially) compact Hausdorff space.
\end{theorem}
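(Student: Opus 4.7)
The plan is to verify the three required properties separately.

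\textbf{Hausdorff.} Suppose $(a_n)_n$ locally $H$-converges to both $a$ and $a' \in M(\alpha,\beta,\Omega)$. For any $f \in H^{-1}(\Omega)$, let $u \in H_0^1(\Omega)$ denote the common weak limit of the solutions $u_n$; uniqueness of weak limits applied to the flux sequence $(a_n \grad u_n)_n$ forces $a \grad u = a' \grad u$ in $L^2(\Omega)^d$. Since $f \mapsto u$ ranges over all of $H_0^1(\Omega)$, the identity $(a-a')\grad u = 0$ holds for every $u \in H_0^1(\Omega)$. Plugging in smooth cut-offs of affine functions $x \mapsto \xi \cdot x$ then yields $(a(x)-a'(x))\xi = 0$ for almost every $x$ and every $\xi \in \mathbb{C}^d$, hence $a = a'$.

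\textbf{Sequential compactness.} For $(a_n)_n \subseteq M(\alpha,\beta,\Omega)$ and $f \in H^{-1}(\Omega)$ the coercivity/boundedness hypotheses yield $\|u_n\|_{H_0^1(\Omega)} \leq \alpha^{-1}\|f\|_{H^{-1}(\Omega)}$ and $\|a_n \grad u_n\|_{L^2(\Omega)^d} \leq \beta\alpha^{-1}\|f\|_{H^{-1}(\Omega)}$. Using separability of $H^{-1}(\Omega)$, a Cantor diagonal argument extracts a subsequence along which $u_n \rightharpoonup u^f$ in $H_0^1(\Omega)$ and $a_n \grad u_n \rightharpoonup \sigma^f$ in $L^2(\Omega)^d$ for every $f$ in a prescribed countable dense set; the uniform bounds extend the construction to all $f$ by density, producing bounded linear maps $f \mapsto u^f$ and $f \mapsto \sigma^f$. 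The core step is to produce $a \in M(\alpha,\beta,\Omega)$ with $\sigma^f = a \grad u^f$ almost everywhere, and for this I would follow Tartar's method of oscillating test functions: after a further refinement, construct correctors $w_n^\xi$ for $\xi$ in a countable dense subset of $\mathbb{C}^d$ satisfying $w_n^\xi \rightharpoonup \xi \cdot x$ in $H_\loc^1$ and $a_n^\ast \grad w_n^\xi \rightharpoonup a^\ast \xi$ for some $a^\ast \in L^\infty(\Omega)^{d\times d}$, then apply the div-curl (compensated compactness) lemma to pass to the limit in
\[
  \int_\Omega \phi\,(a_n\grad u_n)\cdot \grad w_n^\xi\,\rmd x \;=\; \int_\Omega \phi\,\grad u_n\cdot (a_n^\ast\grad w_n^\xi)\,\rmd x, \qquad \phi \in C_c^\infty(\Omega).
\]
Equating the two limits gives $\sigma^f\cdot\xi = a^\ast\xi\cdot \grad u^f$ a.e., hence $\sigma^f = a\grad u^f$ with $a$ the pointwise adjoint of $a^\ast$; the inclusion $a \in M(\alpha,\beta,\Omega)$ follows from weak lower semicontinuity applied to the quadratic forms $\langle a_n\phi,\phi\rangle$ and $\langle a_n^{-1}\psi,\psi\rangle$ on suitable test sequences built from $u_n$ and $w_n^\xi$.

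\textbf{Metrisability and main obstacle.} Separability of $H^{-1}(\Omega)$ makes the weak topologies on bounded subsets of $H_0^1(\Omega)$ and $L^2(\Omega)^d$ metrisable. The local $H$-convergence topology coincides with the initial topology on $M(\alpha,\beta,\Omega)$ induced by the countable family of maps $a \mapsto (u^f_a,\,a\grad u^f_a)$ indexed by $f$ in a countable dense subset of $H^{-1}(\Omega)$ and valued in such metrisable bounded sets; hence it is metrisable. The main obstacle throughout is the corrector construction: the $w_n^\xi$ solve adjoint auxiliary Dirichlet problems, so the diagonal extraction must be organised to simultaneously absorb a countable family of constant data $\xi$ in addition to the data $f$, and one must check that the resulting $a^\ast$ is independent of the countable families chosen before the div-curl identification and the quantitative bounds for $a$ can be invoked.
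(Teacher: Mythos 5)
The paper does not prove Theorem~\ref{thm:TM}: it is stated as a citation to \cite[Theorem~6.5]{Tartar2009} and used as a black box, so there is no in-paper argument for you to match. Your sketch reproduces the classical Murat--Tartar argument from that reference --- uniqueness via cut-offs of affine functions, diagonal extraction over a countable dense set of $f$, oscillating test functions $w_n^\xi$ for the adjoint problems, the div-curl lemma to identify $\sigma^f = a\grad u^f$, and lower semicontinuity of the quadratic forms to place $a$ back in $M(\alpha,\beta,\Omega)$ --- and this is correct at the level of detail one would expect of a sketch.

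It is worth contrasting this with the route the paper's own framework gives to the compactness and metrisability parts. Theorems~\ref{thm:Hcom} and~\ref{thm:met} establish compactness and metrisability of $\mathcal{M}_1(\alpha,\beta,(A_0,A_1))$ under the nonlocal topology $\tau_H$ using only Tychonov/weak-operator-topology compactness of norm balls in $L(H_1)$, applied to the four Schur-complement block entries $a_{00}^{-1}$, $a_{10}a_{00}^{-1}$, $a_{00}^{-1}a_{01}$, $a_{11}-a_{10}a_{00}^{-1}a_{01}$ in the Helmholtz-type decomposition $H_1=\rge(A_0)\oplus\rge(A_1^*)$ (Lemma~\ref{lem:boinv}); there are no oscillating test functions, no correctors, and no div-curl lemma. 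Theorem~\ref{thm:lHnlH} then identifies $\tau_H$ (for $(A_0,A_1)=(\interior{\grad},\interior{\curl})$) with $\tau_{\textnormal{loc}H}$ on $M(\alpha,\beta,\Omega)$, which transports the compactness back. The trade-off: your approach is self-contained but leans on locality (multiplication operators) both for the correctors and for the cut-off argument in the Hausdorff step, whereas the paper's approach is cleaner and works for genuinely nonlocal $a$ --- but the identification step in Theorem~\ref{thm:lHnlH} still quotes the uniqueness of the local $H$-limit from Tartar, which is exactly what your cut-off argument supplies, so the operator-theoretic route is not an independent replacement for the Hausdorff part.
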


As a consequence of the latter theorem, the local $H$-limit is unique and any sequence $(a_n)_n$ in $M(\alpha,\beta,\Omega)$ has a locally $H$-convergent subsequence. The arguments used to show the latter result are based on localisation techniques. Further characterising properties for instance as the one in \cite[p 10]{Tartar1997} and concrete formulas for the limit $a$ in case of periodic coefficients use Tartar's method of oscillating test functions as well as the celebrated div-curl lemma (see \cite{Murat1978}). We shall also refer to the techniques in \cite{Jikov1994} or \cite{Cioranescu1999}, which are in turn local in nature.

In recent years the interest in so-called meta-materials has emerged. Although it is generally rather difficult to find a precise definition for meta-materials physicists have been dealing with this notion for quite a while for coining materials with properties that are not known for so-called `classical' materials. In fact, meta-materials do not occur in nature and have to be manufactured artificially. A subclass of these meta-materials are best described by non-local constitutive relations, where integral operators rather than multiplication operators are used as coefficients, see e.g.~\cite{Gorlach2016,Ciattoni2015,Mendez2017}. 

Other nonlocal constitutive relations can be found in nonlocal response theory related to quantum theory, see \cite[Chapter 10]{Keller2011}. Furthermore, we shall refer to the so-called McKean--Vlasov equations, see \cite{Carillo18} and Example \ref{ex:dp1nl} below. For an account on nonlocal elasticity we refer to the recent preprint \cite{Evgrafov2018}.

Also, if the oscillations of the coefficients are `perpendicular' to the differential operators occurring in the differential equation nonlocal effects result after a homogenisation process. For this we refer to \cite{Tartar1989,Waurick2014,W12_HO} as paradigmatic examples where ordinary differential equations with infinite-dimensional state space have been considered. We also refer to \cite{Wellander2001,W14_FE,W16_HPDE} where memory effects have been derived due to a homogenisation process. 

Nonlocal material models also occur, when homogenising materials with `soft' and `stiff' components, which in turn is modelled by non-uniform coercivity estimates in the coefficients with respect to $n$. A prominent example are equations with high-contrast or singular coefficients, see e.g.~\cite[eq. (4.3)]{Cherednichenko2006}. 

In certain cases nonlocal homogenisation procedures have been carried out, see e.g.~\cite{Gorlach2016,Ciattoni2015,Yvonnet2014,Tsukerman2017}. We shall also refer to \cite{Piatnitski2017,Du2016} for non-pde type homogenisation problems.

A general theory, however, describing highly oscillatory \emph{nonlocal} material models has been missing so far. Thus, the aim of the present article is to introduce the notion of \emph{nonlocal $H$-convergence}.  As mentioned above, the notion of nonlocal $H$-convergence will become important, when one analyses \emph{iterated} homogenisation schemes of local models that result in nonlocal limit models or, if one discusses homogenisation problems for certain meta-materials so that nonlocal partial differential equations occur right from the start. 

We shall argue that local $H$-convergence cannot capture nonlocal coefficients. Indeed, assume in \eqref{eq:stex} we allow for general $a_n\in L(L^2(\Omega)^d)$ satisfying (suitable) uniform coercivity and boundedness conditions. In order to be consistent with local $H$-convergence, the nonlocal $H$-convergence needs to coincide, when applied to sequences in $M(\alpha,\beta,\Omega)$. So, assume that $(a_n)_n$ in $L(L^2(\Omega)^d)$ locally $H$-converges to $a$, that is, apply the above definition to general operators in $L^2(\Omega)^d$. Let $b\in L(L^2(\Omega)^d)$ with $a=b$ on $\rge(\interior{\grad})= \{ q\in L^2(\Omega)^d; \exists u\in H_0^1(\Omega): \grad u = q\}$. Then $(a_n)_n$ locally $H$-converges to $b$, as well. Since $\rge(\interior{\grad})^{\bot}=\kar(\dive)=\{q\in L^2(\Omega)^d; \dive q=0\}$ is \emph{infinite-dimensional} as long as $d\geq 2$, we infer that local $H$-convergence is clearly not sufficient to uniquely identify nonlocal limit operators.

When introducing any notion of nonlocal $H$-convergence, we cannot expect properties of local $H$-convergence like independence of the attached boundary conditions (\cite[Lemma 10.3]{Tartar2009}) to carry over to nonlocal $H$-convergence. On the contrary, for the proper functional analytic setting the attached boundary conditions are of prime importance. We refer to Example \ref{ex:dpbc} below for the precise argument showing that nonlocal $H$-convergence \emph{depends} on the boundary conditions attached.

However, we shall obtain a result analogous to Theorem~\ref{thm:TM} for the newly introduced notion (see Proposition \ref{prop:T2} and Theorem \ref{thm:Hcom}), which is one of the main results of the present exposition. Furthermore, we shall show that on $M(\alpha,\beta,\Omega)$ nonlocal $H$-convergence and local $H$-convergence coincide (see Theorem \ref{thm:lHnlH}).

We provide an overview of the contents of this article, next.

In Section \ref{sec:nonH}, we will introduce nonlocal $H$-convergence. For the definition of nonlocal $H$-convergence, one observes that a certain elliptic problem with $\dive$ and $\grad$ both replaced by $\curl$ with appropriate boundary conditions leads to the same homogenised limit as for the original divergence form type equation (see \eqref{eq:stex}). Thus, quite naturally, for nonlocal $H$-convergence, we shall use the theory of closed complexes of operators in Hilbert spaces, which is a generalisation of the operators $\grad$, $\curl$, and $\dive$ and will be specified in Section \ref{sec:com}. In this section, we will also recall a more detailed version of the Lax--Milgram lemma (see Theorem \ref{thm:varwp2} and \cite{TW14_FE}), which is crucial for our later analysis. Note that the core observation that it is possible to formulate kernels of differential operators via the application of other differential operators has been employed already in the context of Picard's extended Maxwell system in order to discuss low-frequency asymptotics for the time-harmonic Maxwell's equations, see \cite{P84}.

The emergence of nonlocal or memory effects during the homogenisation process is rooted in the lack of continuity of the inversion mapping for linear operators in the weak operator topology, see \cite{W12_HO} and Proposition \ref{prop:invdis}. It is easy to see that also multiplication is not jointly continuous in the weak operator topology either. However, a suitable combination of projection, multiplication and inversion of the operator sequence $(a_n)_n$ does \emph{characterise} nonlocal $H$-convergence. This is the subject of Section \ref{sec:block} with its main result Theorem \ref{thm:chH}. 

The results of Section \ref{sec:block} will be used in order to obtain the announced variant of Theorem~\ref{thm:TM} in the context of nonlocal $H$-convergence. From the compactness statement for nonlocal $H$-convergence, we may then deduce Theorem~\ref{thm:lHnlH} -- the relationship of local and nonlocal $H$-convergence. This in turn yields a homogenisation result for static Maxwell type equations under the hypothesis of $H$-convergence for local coefficients, see Corollary~\ref{cor:lHnlH}, which is interesting on its own.

Using the global div-curl lemma obtained in \cite{W17_DCL}, we provide a characterisation of nonlocal $H$-convergence in terms of (abstract) `div-curl quantities' in Section \ref{sec:dct}. This characterisation is an abstract variant of \cite[Lemma 4.5]{Jikov1994} and should be remindful of \cite[p. 10]{Tartar1997}. Note that the main result of Section \ref{sec:dct}, Theorem \ref{thm:chH3}, provides a nice way of practically computing the nonlocal $H$-limit in applications. We will use Theorem \ref{thm:chH3} for the computation of the nonlocal $H$-limit for a linear variant of the McKean--Vlasov equation, see Example \ref{ex:mcv3}.

The range of applicability of the main theoretical results is further touched upon in the two concluding Sections \ref{sec:max} and \ref{sec:mex}. In Section \ref{sec:max} we shall revisit some ideas from \cite{W16_HPDE} and discuss a homogenisation problem for the fully time-dependent, 3D Maxwell's equations. In fact, the main result of Section \ref{sec:max} generalises the main results in \cite{Barbatis2003,Wellander2001} to both non-periodic and nonlocal (in both space and time) settings. We note that non-uniformly dielectric media as occurring for eddy current type approximations are admitted in the general homogenisation scheme. In fact, the underlying media may even rapidly oscillate between strictly positive and vanishing dielectricity on different spatial domains. This oscillatory behaviour between hyperbolic and parabolic type problems has only recently been accessible for $1+1$-dimensional periodic model problems, see \cite{CW17_1D,FW17_1D,W16_SH}.

In Section \ref{sec:mex} we will provide applications to a homogenisation problem of fourth order and an adapted perspective to nonlocal homogenisation on Riemannian manifolds. The latter provides the nonlocal counterpart of \cite{Hoppe2017}.
\section{On closed operator complexes and abstract elliptic pdes}\label{sec:com}

Throughout this section, we let $H_0$, $H_1$, $H_2$ be Hilbert spaces. Furthermore, we let \begin{align*}
A_0&\colon \dom(A_0)\subseteq H_0\to H_1, \\
A_1&\colon \dom(A_1)\subseteq H_1\to H_2\end{align*} be densely defined and closed linear operators.

\begin{definition}
  We say that $(A_0,A_1)$ is a \emph{complex} or \emph{sequence}, if $\rge(A_0)\subseteq \kar(A_1)$. We call a complex $(A_0,A_1)$ \emph{closed}, if both $\rge(A_0)\subseteq H_1$ and $\rge(A_1)\subseteq H_2$ are closed. A complex $(A_0,A_1)$ is \emph{exact}, if $\rge(A_0)=\kar(A_1)$. A complex $(A_0,A_1)$ is called \emph{compact}, if $\dom(A_0^*)\cap \dom(A_1)\hookrightarrow H_1$ compactly.
\end{definition}

For short reference, we shall often address `exact' for complexes, just by saying `$(A_0,A_1)$ is exact' and imply the meaning `$(A_0,A_1)$ is an exact complex' (similarly for `compact' and `closed').

We recall some elementary properties of the theory of complexes of operators in Hilbert spaces, which we state without proof. We refer to \cite[Section 2]{Pauly2016} for the proofs. The assertions, however, follow from the closed range theorem (see e.g.~\cite[Corollary 2.5]{TW14_FE}) and the orthogonal decomposition $H_0=\kar(C)\oplus \overline{\rge}(C^*)$ for $C\colon \dom(C)\subseteq H_0\to H_1$ densely defined, closed. The assertion relating compactness follows from the fact that compact operators are compact if and only if their adjoints are. Moreover, the last statement follows from a contradiction argument and the fact that compact unit balls characterise finite-dimensionality.   

\begin{proposition}\label{prop:comelm} \begin{enumerate}
                                        \item $(A_0,A_1)$ is a complex if and only if $(A_1^*,A_0^*)$ is a complex;
                                        \item $(A_0,A_1)$ is closed if and only if $(A_1^*,A_0^*)$ is closed;
                                        \item Assume $(A_0,A_1)$ is closed. Then $(A_0,A_1)$ is exact if and only if $(A_1^*,A_0^*)$ is exact;
                                        \item $(A_0,A_1)$ is compact if and only if $(A_1^*,A_0^*)$ is compact;
                                        \item Let $(A_0,A_1)$ be compact. Then $(A_0,A_1)$ is closed and $\kar(A_0^*)\cap \kar(A_1)$ is finite-dimensional.
                                       \end{enumerate}
\end{proposition}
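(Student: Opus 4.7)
The plan is to obtain (a)--(d) by routine duality manipulations and to concentrate the real work on (e).

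First I would handle (a) by taking orthogonal complements: since $A_1$ is densely defined and closed, $\kar(A_1)=\rge(A_1^*)^\perp$, so the inclusion $\rge(A_0)\subseteq\kar(A_1)$ is equivalent, on passing to orthogonal complements, to $\overline{\rge(A_1^*)}\subseteq\rge(A_0)^\perp=\kar(A_0^*)$, which says exactly that $(A_1^*,A_0^*)$ is a complex. The converse uses $A_i^{**}=A_i$. For (b), the closed range theorem gives $\rge(A_i)$ closed iff $\rge(A_i^*)$ closed, so the two closedness conditions transpose simultaneously. Given (b), part (c) follows from the chain
\begin{equation*}
\kar(A_0^*)=\rge(A_0)^\perp=\kar(A_1)^\perp=\overline{\rge(A_1^*)}=\rge(A_1^*),
\end{equation*}
where the second equality uses exactness of $(A_0,A_1)$ and the last one uses closedness. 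Part (d) is immediate: since $A_1$ is densely defined and closed, $A_1^{**}=A_1$, so the middle space and its graph norm for $(A_1^*,A_0^*)$ coincide with those for $(A_0,A_1)$.

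For (e) I would first dispose of the finite-dimensionality claim. On the subspace $N\coloneqq\kar(A_0^*)\cap\kar(A_1)$ the graph norm of $\dom(A_0^*)\cap\dom(A_1)$ reduces to the $H_1$-norm, so the compact-embedding hypothesis forces the $H_1$-unit ball of $N$ to be relatively compact in $H_1$; Riesz's lemma then yields $\dim N<\infty$. For closedness, I would show by contradiction that the Poincaré-type estimate $\|A_0^*v\|\geq c\|v\|$ holds on $\dom(A_0^*)\cap\kar(A_0^*)^\perp$, and similarly $\|A_1u\|\geq c\|u\|$ on $\dom(A_1)\cap\kar(A_1)^\perp$; the closed range theorem then upgrades these to closedness of $\rge(A_0^*)$ and $\rge(A_1)$, hence of $\rge(A_0)$ and $\rge(A_1)$. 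For the first estimate, given a normalised counterexample $v_n\in\dom(A_0^*)\cap\overline{\rge(A_0)}$ with $A_0^*v_n\to 0$, the crucial observation is $\overline{\rge(A_0)}\subseteq\kar(A_1)$ (using that $\kar(A_1)$ is closed and contains $\rge(A_0)$), so each $v_n$ automatically lies in $\dom(A_1)$ with $A_1v_n=0$; the sequence is therefore uniformly bounded in the graph norm of $\dom(A_0^*)\cap\dom(A_1)$. Compactness extracts $v_{n_k}\to v$ in $H_1$ with $\|v\|=1$; closedness of $A_0^*$ and $A_1$ forces $v\in\kar(A_0^*)\cap\overline{\rge(A_0)}=\{0\}$, a contradiction. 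The symmetric argument for $A_1$ uses the inclusion $\overline{\rge(A_1^*)}\subseteq\kar(A_0^*)$, which follows from (a) together with closedness of $\kar(A_0^*)$, to place the putative counterexample once more inside $\dom(A_0^*)\cap\dom(A_1)$.

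The main obstacle I expect is exactly this bookkeeping in the two contradiction arguments: one must invoke the complex property, plus closedness of $\kar(A_1)$ and $\kar(A_0^*)$, to guarantee that the Poincaré sequences actually live in the domain on which the compact-embedding hypothesis acts. Everything else is a direct application of the closed range theorem and the orthogonal decomposition $H_1=\kar(A_0^*)\oplus\overline{\rge(A_0)}=\kar(A_1)\oplus\overline{\rge(A_1^*)}$.
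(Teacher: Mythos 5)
Your proof is correct and, for parts (a)--(c) and (e), it fleshes out exactly the sketch the paper gives (closed range theorem, the decomposition $H_1=\kar(C)\oplus\overline{\rge}(C^*)$, the unit-ball/Riesz argument, and a contradiction for closedness). The one place where you depart from the paper's hint is (d): the paper suggests invoking the fact that an operator is compact iff its adjoint is, whereas you simply observe that, because $A_1^{**}=A_1$ and $(A_0^*)^*=A_0$, the space $\dom(A_0^*)\cap\dom(A_1)$ appearing in the definition of compactness of $(A_0,A_1)$ is \emph{literally the same} (with the same graph norm) as the space $\dom((A_1^*)^*)\cap\dom(A_0^*)$ that one needs for $(A_1^*,A_0^*)$. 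Your argument is cleaner and avoids an unnecessary detour; the paper's hint presumably aims at a formulation where the compactness condition is phrased through a compact factorisation rather than a common embedded subspace. The rest of (e) --- in particular the care taken to show that the Poincar\'e sequences land in $\dom(A_0^*)\cap\dom(A_1)$ via the inclusions $\overline{\rge(A_0)}\subseteq\kar(A_1)$ and $\overline{\rge(A_1^*)}\subseteq\kar(A_0^*)$ --- is exactly the bookkeeping the paper's terse ``contradiction argument'' hint leaves to the reader, and you have done it correctly.
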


Before we treat differential and, thus, particularly, unbounded operators, we shall state a rather trivial example of an exact complex.

\begin{example}\label{ex:tr} Denote by $H$ a Hilbert space and let $\{0\}=\lin\emptyset$ be the trivial Hilbert space consisting of $0$, only. Let $\iota_{0} \colon \lin\emptyset \to H, 0\mapsto 0$ and $1 \colon H \to H, \phi\mapsto \phi$. With the setting $H_0=\{0\}$, $H_1=H$, $H_2=H$ together with $A_0=\iota_{0}$ and $A_1=1$, we are in the situation of the beginning of this section. Indeed, since $A_0$ and $A_1$ are bounded linear operators, they are densely defined and closed. Moreover, their ranges are closed and $\rge(A_0)=\{0\}=\kar(A_1)$, so that $(A_0,A_1)$ is exact. $A_1$ is obviously self-adjoint and $A_0^*=\iota_{0}^*$ is the (orthogonal) projection onto $\{0\}$. By Proposition \ref{prop:comelm} (or direct verification), $(A_1^*,A_0^*)$ is closed and exact, as well. 
\end{example}

For the time being, we focus on the 3-dimensional model case. Note that, however, the theory carries over to the higher-dimensional setting. For this, we refer for instance to \cite[Theorem 3.5]{W17_DCL} for an account on higher-dimensional situations. Other examples are treated in Section \ref{sec:mex}. Note that exactness of the considered complexes is an incarnation of Poincar\'e's lemma (see also Section \ref{sec:mex} below).
\begin{example}\label{ex:diffo} Let $\Omega\subseteq\mathbb{R}^3$ open. We define
\begin{align*}
   \grad_\text{c} & \colon C_c^\infty(\Omega) \subseteq L^2(\Omega) \to L^2(\Omega)^3, \phi\mapsto (\partial_j\phi_j)_{j\in\{1,2,3\}},
   \\ \dive_\text{c} & \colon C_c^\infty(\Omega)^3 \subseteq L^2(\Omega)^3 \to L^2(\Omega), (\phi_j)_{j\in\{1,2,3\}}\mapsto \sum_{j=1}^d\partial_j\phi_j,
   \\ \curl_{\text{c}} & \colon C_c^\infty(\Omega)^{3} \subseteq L^2(\Omega)^{3} \to L^2(\Omega)^{3}, (\phi_{j})_{j\in\{1,2,3\}}\mapsto \left(\begin{smallmatrix} \partial_2\phi_3 - \partial_3\phi_2 \\ \partial_3\phi_1-\partial_1\phi_3 \\ \partial_1\phi_2-\partial_2\phi_1   
                                                                                                                     \end{smallmatrix}\right).
\end{align*}
We set $\interior{\grad}\coloneqq \overline{\grad}_\text{c}$ and, similarly, $\interior{\dive},\interior{\curl}$. Furthermore, we put $\dive\coloneqq -\interior{\grad}^*$, $\grad\coloneqq -\interior{\dive}^*$, and $\curl\coloneqq \interior{\curl}^*$.

Before we state several examples of complexes, we shall highlight the domains of the operators introduced and the differences between them. It is almost immediate from the definition of the adjoint and the distributional gradient that we have
\[
   \dom(\grad) = H^1(\Omega).
\] Since the domain of $\interior{\grad}$ is the closure of $C_c^\infty(\Omega)$ with respect to the $H^1(\Omega)$-scalar product, we obtain that
\[
   \dom(\interior{\grad}) = H_0^1(\Omega),
\]
which, in turn, for $\Omega$ with Lipschitz continuous boundary (so that the boundary trace $\gamma \colon H^1(\Omega) \to H^{1/2}(\partial \Omega), u\mapsto u|_{\partial\Omega}$ is a well-defined, continuous operator) reads
\[
  \dom(\interior{\grad}) = \{ u\in H^1(\Omega); \gamma(u)= 0\}.
\]
Similarly, we obtain
\[
   \dom(\curl) = \{ u\in L^2(\Omega)^3; \curl u \in L^2(\Omega)^3\} \eqqcolon H(\curl,\Omega).
\]
Again, if we restrict ourselves to the setting of $\Omega$ with Lipschitz boundary, we may define the tangential trace by (continuous extension of) $\gamma_{\times} : H^1(\Omega)\subseteq H(\curl,\Omega) \to H^{-1/2}(\partial\Omega), u\mapsto \gamma(u)\times \mathrm{n}$, where $\mathrm{n}$ denotes the unit outward normal of $\partial\Omega$, which exists almost everywhere, see also \cite{Buffa2002}. In particular, we obtain
\[
   \dom(\interior{\curl}) = \{ u \in H(\curl,\Omega); \gamma_{\times}(u)=0\} \eqqcolon H_0(\curl,\Omega).
\]
Finally, by definition, we obtain
\[
   \dom(\dive) = \{ u\in L^2(\Omega)^3; \dive u \in L^2(\Omega)\} \eqqcolon H(\dive,\Omega).
\]
Similarly, we obtain for $\Omega$ admitting a strong Lipschitz boundary $\partial\Omega$ with unit outward normal $\mathrm{n}$ that using the normal trace operator $\gamma_\textrm{n} \colon H^1(\Omega)\subseteq H(\dive,\Omega)\to H^{-1/2}(\partial\Omega), q\mapsto \textrm{n}\cdot q$ again obtained by continuous extension. With this we may also write
\[
   \dom (\interior{\dive})= \{ u\in H(\dive,\Omega); \gamma_\textrm{n}(u)=0\}\eqqcolon H_0(\dive,\Omega).
\]
\begin{enumerate}
                \item[(a1)] If $\Omega$ is bounded in one direction, then, by Poincar\`e's inequality, $(\iota_0,\interior{\grad})$, where $\iota_0 \colon \{0\}\hookrightarrow L^2(\Omega)$, is closed and exact (here $H_0=\{0\}$, $H_1=L^2(\Omega)$ and $H_2=L^2(\Omega)^3$). Consequently, by Proposition \ref{prop:comelm}, so is $(\dive,\iota_0^*)$. In particular, this implies that $\dive$ maps onto $L^2(\Omega)$.
                \item[(b1)] If $\Omega$ is bounded with continuous boundary, by the Rellich--Kondrachov theorem, the complex $(\iota_0,\grad )$ is compact (here $H_0=\{0\}$, $H_1=L^2(\Omega)$ and $H_2=L^2(\Omega)^3$). The same applies to $(\interior{\dive},\iota_0^*)$, by Proposition \ref{prop:comelm}.
\end{enumerate}

For the next examples we refer to \cite{Bauer2016} for the asserted compactness properties as a general reference.  We shall also refer to the references therein for a guide to the literature.
\begin{enumerate} 
                \item[(a2)] If $\Omega$ is a bounded weak Lipschitz domain, that is, if $\Omega$ is a Lipschitz manifold, then $(\interior{\grad},\interior{\curl})$ is compact (here $H_0=L^2(\Omega)$, $H_1=H_2=L^2(\Omega)^3$). In particular, so is $({\curl},\dive)$ (Weck's selection theorem, see also \cite{Weck1974} or \cite{Picard1984}).
                \item[(b2)] If $\Omega$ is a bounded weak Lipschitz domain, then $({\grad},{\curl})$ is compact (here $H_0=L^2(\Omega)$, $H_1=H_2=L^2(\Omega)^3$). In particular, so is $(\interior{\curl},\interior\dive)$.
\end{enumerate}
We refer to \cite{Bauer2016} also for mixed boundary conditions and the respective complex and/or compactness properties.
\end{example}

\begin{example}\label{ex:dfnf} In the situation of the previous example, let $\Omega$ be a bounded weak Lipschitz domain. The exactness of the considered complexes $(\grad,\curl)$ and $(\interior{\grad},\interior{\curl})$ can be guaranteed by topological properties of the domain and its complement. In fact, using the complex property ($\rge(\interior{\curl})\subseteq \kar(\interior{\dive})$ and $\rge(\curl)\subseteq \kar(\dive)$) we can decompose $L^2(\Omega)^3$ as follows
\[
   L^2(\Omega)^3 = \rge(\grad)\oplus \kar(\interior{\dive}) = \rge(\grad)\oplus \big( \kar(\interior{\dive}) \cap \kar(\curl)\big)\oplus \rge(\interior{\curl})
\]
and 
\[
   L^2(\Omega)^3 =  \rge(\interior{\grad})\oplus \kar({\dive}) = \rge(\interior{\grad})\oplus \big(\kar({\dive}) \cap \kar(\interior{\curl})\big)\oplus \rge({\curl}).
\]
Next, by Proposition \ref{prop:comelm}, $(\grad,\curl)$ is exact, if and only if $(\interior{\curl},\interior{\dive})$ is exact, if and only if $\kar(\interior{\dive})=\rge(\interior{\curl})$, if and only if $\dim(\kar(\interior{\dive})\cap \kar(\curl))=\{0\}$. Thus,
\[
    (\grad, \curl)\text{ exact }\iff \mathcal{H}_N\coloneqq  \{ q\in H(\interior{\dive},\Omega)\cap H(\curl,\Omega); \dive q = 0, \curl q =0\}=\{0\}.
\]
Similarly,
\[
   (\interior{\grad}, \interior{\curl})\text{ exact }\iff \mathcal{H}_D \coloneqq \{ q\in H(\dive,\Omega)\cap H(\interior{\curl},\Omega); \dive q = 0, \curl q =0\}=\{0\}.
\]
The space $\mathcal{H}_N$ describes the space of harmonic Neumann fields and $\mathcal{H}_D$ are the harmonic Dirichlet fields. Next, \cite[Remark 3(a)]{Picard1984} in conjunction with \cite[Theorem 1]{Picard82} leads to the following characterisations:
\[
   (\grad,\curl)\text{ exact }\iff \Omega \text{ simply connected}
\]
and
\[
   (\interior{\grad},\interior{\curl})\text{ exact }\iff \mathbb{R}^3\setminus {\Omega} \text{ connected}.
\]
\end{example}

Next, we recall a result on the well-posedness of abstract divergence form equations. This result is the Lax--Milgram lemma with a slight twist. We shall, however, emphasise this twist in the argument and the result. Due to the particular variational form of the considered problem class, one can identify elliptic problems in divergence form as the \emph{composition of three continuously invertible mappings}. This observation is the key for the derivations to come. For this reason we present the full proof.

For the statement of the next result, we introduce for a densely defined, closed linear operator $C\colon \dom(C)\subseteq H_0\to H_1$  the canonical embedding
\[\  \iota_{\textnormal{r},C} \colon \overline{\rge}(C)\hookrightarrow H_1.
\]
We note that $\iota_{\textnormal{r},C}^*$ is the orthogonal projection onto $\overline{\rge}(C)$, see \cite[Lemma 3.2]{PTW15_FI} for the elementary argument. 

\begin{theorem}[{{\cite[Theorem 3.1]{TW14_FE}}}]\label{thm:varwp} Let $B\colon \dom(B)\subseteq H_0\to H_1$ be densely defined and closed. Assume that 
\[
   B\text{ is one-to-one, } \rge(B)\subseteq H_1 \text{ closed}.
\]
Let $a\in L(H_1)$ be such that 
\[\Re \big(\iota_{\textnormal{r},B}^*a\iota_{\textnormal{r},B}\big)=(1/2)\iota_{\textnormal{r},B}^*(a+a^*)\iota_{\textnormal{r},B}\geq \alpha\iota_{\textnormal{r},B}^*\iota_{\textnormal{r},B}\]
for some $\alpha>0$.
Then for all $f\in \dom(B)^*$ there exists a unique $u\in \dom(B)$ such that
\[
    \langle a Bu,Bv\rangle = f(v)\quad(v\in \dom(B)).
\]
More precisely, we have 
\[
   u =\mathcal{B}^{-1} (\iota_{\textnormal{r},B}^*a\iota_{\textnormal{r},B})^{-1} (\mathcal{B}^\diamond)^{-1} f,
\]
where $\mathcal{B}\colon \dom(B)\to \rge(B), \phi\mapsto B\phi$ and $\mathcal{B}^\diamond \colon \rge(B)\to \dom(B)^*$ is given by
\[
   \phi \mapsto \big(\dom(B)\ni  v\mapsto \langle \phi,Bv\rangle_{\rge(B)}\big).
\]
\end{theorem}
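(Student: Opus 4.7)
The plan is to view the bilinear variational problem as the composition of three continuously invertible maps, exactly as suggested by the explicit formula in the statement. Setting $T \coloneqq \iota_{\textnormal{r},B}^{*}a\iota_{\textnormal{r},B}$, I want to read the equation $\langle aBu,Bv\rangle = f(v)$ for all $v\in\dom(B)$ as $\mathcal{B}^{\diamond}\circ T\circ \mathcal{B}$ applied to $u$ equalling $f$, and then invert each of the three factors on its appropriate Hilbert space.

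First I would check that $\mathcal{B}\colon \dom(B)\to \rge(B)$ is a bounded bijection, where $\dom(B)$ carries the graph norm of $B$ (so it is a Hilbert space) and $\rge(B)$ is a Hilbert space because it is closed in $H_{1}$. Boundedness is immediate from the graph norm, injectivity is the hypothesis that $B$ is one-to-one, surjectivity holds by restriction of the codomain, and the open mapping theorem then yields that $\mathcal{B}^{-1}$ is bounded. Second, the middle factor $T$ is a bounded coercive operator on the Hilbert space $\overline{\rge}(B)=\rge(B)$, the coercivity being precisely the assumption on $\Re\bigl(\iota_{\textnormal{r},B}^{*}a\iota_{\textnormal{r},B}\bigr)$; classical Lax--Milgram then gives that $T$ is bijective with bounded inverse. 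Third, $\mathcal{B}^{\diamond}$ is, after identifying $\rge(B)$ with its own dual via the Riesz isomorphism, exactly the Banach-space adjoint of $\mathcal{B}$, and since $\mathcal{B}$ is a Banach-space isomorphism the same holds for $\mathcal{B}^{\diamond}$.

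With these three facts in hand, I would unfold the variational equation. Using $Bu = \iota_{\textnormal{r},B}\mathcal{B}u$ and $Bv = \iota_{\textnormal{r},B}\mathcal{B}v$, and noting that $\iota_{\textnormal{r},B}^{*}$ is the orthogonal projection onto $\rge(B)$, one computes
\[
\langle aBu,Bv\rangle_{H_{1}}
 = \langle \iota_{\textnormal{r},B}^{*}a\iota_{\textnormal{r},B}\mathcal{B}u,\mathcal{B}v\rangle_{\rge(B)}
 = \bigl(\mathcal{B}^{\diamond}T\mathcal{B}u\bigr)(v).
\]
The variational problem is therefore equivalent to the single identity $\mathcal{B}^{\diamond}T\mathcal{B}\,u = f$ in $\dom(B)^{*}$, and existence, uniqueness, and the stated formula $u = \mathcal{B}^{-1}T^{-1}(\mathcal{B}^{\diamond})^{-1}f$ follow by successive inversion.

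The main subtlety is keeping track of the several identifications: the Hilbert structure on $\dom(B)$ via the graph norm, the Riesz self-duality of $\rge(B)$, and the interpretation of $\iota_{\textnormal{r},B}$ as inclusion (with $\iota_{\textnormal{r},B}^{*}$ as orthogonal projection). In particular, correctly recognising $\mathcal{B}^{\diamond}$ as the true Hilbert adjoint of $\mathcal{B}$, rather than as an ad hoc pairing, is what lets the open mapping theorem transfer bijectivity from $\mathcal{B}$ to $\mathcal{B}^{\diamond}$ and so close the argument without any hypothesis beyond the coercivity on $\rge(B)$ and the closed-range one-to-one assumption on $B$.
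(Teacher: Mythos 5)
Your proposal is correct and follows essentially the same approach as the paper: both view the variational problem as the composed isomorphism $\mathcal{B}^{\diamond}\,(\iota_{\textnormal{r},B}^{*}a\iota_{\textnormal{r},B})\,\mathcal{B}$ and invert the three factors, with the paper packaging the $\mathcal{B}$, $\mathcal{B}^{\diamond}$ facts in Proposition~\ref{prop:varwp} and the middle operator's invertibility in Lemma~\ref{lem:conpd}(a) where you invoke the open mapping theorem and Lax--Milgram directly. The only cosmetic difference is that the paper inserts $\pi_{\textnormal{r},B}=\iota_{\textnormal{r},B}\iota_{\textnormal{r},B}^{*}$ explicitly before factoring through $\rge(B)$, while you factor $Bu=\iota_{\textnormal{r},B}\mathcal{B}u$ from the start; the content is identical.
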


\begin{example}\label{ex:dp1} Let $\Omega\subseteq \mathbb{R}^3$ be open and bounded. Then, by Example \ref{ex:diffo}(a1), the operator $\interior{\grad}$ has closed range (Poincar\'e`s inequality). Moreover, since $\dom(\interior{\grad})=H_0^1(\Omega)$, we have that $\interior{\grad}$ is one-to-one. Moreover, let $b\in L(L^2(\Omega)^3)$ satisfy
\[
   \Re \langle b q, q\rangle_{L^2(\Omega)^3}\geq \alpha\langle q,q\rangle_{L^2(\Omega)^3}
\]
for all $q\in \rge(\interior{\grad})$ and some $\alpha>0$. In this setting we may apply Theorem \ref{thm:varwp} to $H_0=L^2(\Omega)$, $H_1=L^2(\Omega)^3$, $B=\interior{\grad}$ and $a=b$. Then, by Theorem \ref{thm:varwp}, for all $f\in \dom(\interior{\grad})^*=H^{-1}(\Omega)$ there exists a unique $u\in H_0^1(\Omega)$ such that
\[
     \langle a \interior{\grad} u,\interior{\grad} v\rangle = f(v)\quad(v\in H_0^1(\Omega)).
\]
Using the notation from Theorem \ref{thm:varwp}, we realise that $\mathcal{B}\colon H_0^1(\Omega)\to \rge(\interior{\grad}),u\mapsto \grad u$. It is furthermore easy to see that $\mathcal{B}^{\diamond} = -\dive \colon \rge(\interior{\grad})\to H^{-1}(\Omega)$.  
\end{example}

We shortly elaborate on a nonlocal differential equation of the form of the previous example. It is a linear, static variant of the so-called McKean--Vlasov equation, see e.g.~\cite{Carillo18}
\begin{example}\label{ex:dp1nl} Let $\Omega\subseteq \mathbb{R}^3$ be open and bounded. Let $k \colon \Omega \times \Omega \to \mathbb{C}$ measurable and bounded, $\alpha>0$. Furthermore let $c\in L(L^2(\Omega)^3)$ satisfy 
\[
    \Re \langle c q, q\rangle_{L^2(\Omega)^3}\geq 2\alpha\langle q,q\rangle_{L^2(\Omega)^3}
\]
for all $q\in \rge(\interior{\grad})$. For $q\in L^2(\Omega)^3$ define
\[
    k*q \coloneqq \big(x\mapsto \int_{\Omega} k(x,y)q(y) dy\big).
\]
Assume that $\|k*\|_{L(L^2(\Omega)^3)}\leq \alpha$. Then $b\coloneqq c+k*$ satisfies the conditions imposed on $b$ in Example \ref{ex:dp1}. Indeed, for all $q\in \rge(\interior{\grad})$ we have
\begin{align*}
  \Re \langle b q,q\rangle & = \Re \langle cq , q\rangle + \Re \langle k*q,q\rangle 
  \\ & \ge 2\alpha \langle q,q\rangle - \alpha \|q\|^2 = \alpha \langle q,q\rangle.
\end{align*}
It is worth noting that using the expressions for $\mathcal{B}$ and $\mathcal{B}^\diamond$, we obtain as a resulting differential equation for any $f\in H^{-1}(\Omega)$, where we assume that $k$ is such that $k*$ commutes with the distributional gradient (we refer the reader also to Remark \ref{rem:varpde}):
\[
  f= - \dive b \interior{\grad} u = -\dive c\interior{\grad} u -\dive k* \interior{\grad} u = -\dive c\interior{\grad} u - \dive \grad k*u,
\]
which is of a form similar to \cite{Carillo18} in a static, linear case. 
\end{example}

Before we turn to the proof of Theorem \ref{thm:varwp}, we provide some particular insight for the case $a=1$.

\begin{proposition}\label{prop:varwp} Let $B\colon \dom(B)\subseteq H_0\to H_1$ be densely defined and closed. Assume that $B$ is one-to-one and has closed range. Then $\mathcal{B}$ and $\mathcal{B}^\diamond$ are Banach space isomorphisms. More precisely, we have \[\mathcal{B}^\diamond = \mathcal{B}'R_{\rge(B)},\] where $\mathcal{B}'\colon \rge(B)^*\to \dom(B)^*$ is the dual operator of $\mathcal{B}$ and $R_{\rge(B)}$ is given by
\[
    R_{\rge(B)} \colon \rge(B)\to \rge(B)^*, \phi\mapsto ( v\mapsto \langle \phi,v\rangle_{\rge(B)}).
\]
\end{proposition}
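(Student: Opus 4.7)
My plan is first to establish that $\mathcal{B}$ itself is a Banach space isomorphism, and then to decompose $\mathcal{B}^\diamond$ through the Riesz map to get the claimed factorisation, from which the isomorphism property of $\mathcal{B}^\diamond$ is automatic.

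For the first step, I note that, since $B$ is closed, $\dom(B)$ equipped with the graph norm is a Hilbert space, and since $\rge(B)$ is closed in $H_1$ it is a Hilbert space as well. The map $\mathcal{B}$ is bounded because $\|Bu\|_{H_1}\le \|u\|_{\dom(B)}$, it is surjective onto $\rge(B)$ by the very definition of the range, and injective by the assumption that $B$ is one-to-one. The bounded inverse theorem then gives that $\mathcal{B}^{-1}$ is bounded, so $\mathcal{B}$ is a Banach space isomorphism. Taking duals preserves this, so $\mathcal{B}'\colon \rge(B)^*\to \dom(B)^*$ is an isomorphism too.

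For the second step, I would unfold the definitions. By definition of the dual operator, for any $\psi\in \rge(B)^*$ and $v\in\dom(B)$ one has $(\mathcal{B}'\psi)(v)=\psi(\mathcal{B}v)=\psi(Bv)$. Applying this to $\psi=R_{\rge(B)}\phi$ for a given $\phi\in\rge(B)$ yields
\[
(\mathcal{B}' R_{\rge(B)}\phi)(v)=(R_{\rge(B)}\phi)(Bv)=\langle \phi, Bv\rangle_{\rge(B)},
\]
which matches the defining formula of $\mathcal{B}^\diamond \phi$ evaluated at $v$. Hence $\mathcal{B}^\diamond=\mathcal{B}' R_{\rge(B)}$.

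To conclude, I observe that $R_{\rge(B)}$ is the Riesz isomorphism of the Hilbert space $\rge(B)$, therefore a (conjugate-)linear Banach space isomorphism, and $\mathcal{B}'$ was shown to be an isomorphism above. The composition $\mathcal{B}^\diamond$ is therefore an isomorphism as well. I do not anticipate any genuine obstacle here; the only minor care required is to be explicit about the function spaces involved, in particular that $\dom(B)$ carries the graph norm (so that the bounded inverse theorem applies) and that $\rge(B)$ is complete (so that the Riesz map is available), both of which are furnished by the hypotheses that $B$ is closed and $\rge(B)\subseteq H_1$ is closed.
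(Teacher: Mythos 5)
Your proposal is correct and follows essentially the same route as the paper: establish that $\mathcal{B}$, and hence $\mathcal{B}'$, are isomorphisms, note that $R_{\rge(B)}$ is the Riesz isomorphism, and verify the factorisation $\mathcal{B}^\diamond=\mathcal{B}'R_{\rge(B)}$ by unwinding the definitions at a general $\phi\in\rge(B)$ and $v\in\dom(B)$. The only cosmetic difference is that the paper passes through the Hilbert-space adjoint $\mathcal{B}^*$ before invoking the Riesz map to conclude $\mathcal{B}'$ is an isomorphism, whereas you argue directly with the Banach-space dual; both are standard and equivalent.
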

\begin{proof}
  $\mathcal{B}$ is one-to-one and onto. Hence, an isomorphism. Thus, so is $\mathcal{B}^*$. By unitary equivalence using the Riesz map, we obtain that $\mathcal{B}'$ is an isomorphism, as well. Finally, $R_{\rge(B)}$ is the inverse of the Riesz isomorphism. Thus, we are left showing that $\mathcal{B}^\diamond = \mathcal{B}'R_{\rge(B)}$ holds. For this, let $\phi\in \rge(B)$. Then we have for all $v\in\dom(B)$
  \begin{align*}
  \big(   \mathcal{B}^\diamond (\phi)\big)(v) & =  \langle \phi,Bv\rangle_{\rge(B)} \\
    & = \big(R_{\rge(B)} (\phi)\big)(Bv)\\
    & = \big(\mathcal{B}'R_{\rge(B)} (\phi)\big)(v). \qedhere 
  \end{align*}
\end{proof}
\begin{proof}[Proof of Theorem~\ref{thm:varwp}] We shall reformulate the left-hand side of the equation to be solved, first. For this, let $\pi_{\textnormal{r},B}$ be the orthogonal projection on $\rge(B)$. Note that $\pi_{\textnormal{r},B}=\iota_{\textnormal{r},B}\iota_{\textnormal{r},B}^*$.  Let $u,v\in\dom(B)$. Then we have 
\begin{align*}
   \langle aBu,Bv\rangle &= \langle a \pi_{\textnormal{r},B} B u, \pi_{\textnormal{r},B} B v\rangle \\
    & = \langle a \iota_{\textnormal{r},B}\iota_{\textnormal{r},B}^* B u, \iota_{\textnormal{r},B}\iota_{\textnormal{r},B}^* B v\rangle \\  & = \langle  \iota_{\textnormal{r},B}^* a \iota_{\textnormal{r},B}\mathcal{B} u,\mathcal{B} v\rangle \\
    & = \big(\mathcal{B}^\diamond \iota_{\textnormal{r},B}^* a \iota_{\textnormal{r},B}\mathcal{B} u\big)( v),
\end{align*}
where we used that $\iota_{\textnormal{r},B}^* B(u)=\mathcal{B}(u)$ for all $u\in \dom(B)$. Thus, the equation to be solved reads
\[
    \mathcal{B}^\diamond  \iota_{\textnormal{r},B}^* a \iota_{\textnormal{r},B} \mathcal{B} u =f.
\]
Under the hypotheses on $a$ using Proposition \ref{prop:varwp} and Lemma \ref{lem:conpd}(a) below, we infer both the uniqueness and the existence result as well as the solution formula.
\end{proof}

The next result deals with the case when $B$ is not one-to-one.

\begin{theorem}[{{\cite[Theorem 3.1]{TW14_FE}}}]\label{thm:varwp2} Let $C\colon \dom(C)\subseteq H_0\to H_1$ be densely defined and closed. Assume that
\[
    \rge(C)\subseteq H_1\text{ closed}.
\]
Let $a\in L(H_1)$ be such that for some $\alpha>0$
\[\Re \big(\iota_{\textnormal{r},C}^*a\iota_{\textnormal{r},C}\big)\geq \alpha\iota_{\textnormal{r},C}^*\iota_{\textnormal{r},C}.
\]
Then for all $f\in \dom(C\iota_{\textnormal{r},C^*})^*$ there exists a unique $u\in \dom(C\iota_{\textnormal{r},C^*})$ with the property
\[
   \langle a Cu,Cv\rangle = f(v) \quad(v\in \dom(C\iota_{\textnormal{r},C^*})).
\]
More precisely, we have
\[
     u = \mathcal{C}^{-1} (\iota_{\textnormal{r},C}^*a\iota_{\textnormal{r},C})^{-1} (\mathcal{C}^\diamond)^{-1} f,
\]
where $\mathcal{C} \colon \dom(C\iota_{\textnormal{r},C^*})\to \rge(C),\phi\mapsto C\phi$.
\end{theorem}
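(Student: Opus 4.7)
The plan is to reduce Theorem~\ref{thm:varwp2} to the already-proved Theorem~\ref{thm:varwp} by restricting $C$ to the orthogonal complement of its kernel, where it becomes injective.

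First I would set $B \coloneqq C\iota_{\textnormal{r},C^*}$ and check that $B \colon \dom(B) \subseteq \overline{\rge}(C^*) \to H_1$ satisfies the hypotheses of Theorem~\ref{thm:varwp}. Using the orthogonal decomposition $H_0 = \kar(C) \oplus \overline{\rge}(C^*)$, one sees that $B$ is one-to-one: if $\phi \in \overline{\rge}(C^*)$ and $C\iota_{\textnormal{r},C^*}\phi = 0$, then $\iota_{\textnormal{r},C^*}\phi \in \kar(C) \cap \overline{\rge}(C^*) = \{0\}$, so $\phi = 0$. Since $\iota_{\textnormal{r},C^*}$ is a bounded isometric embedding and $C$ is closed, $B$ is closed; density of $\dom(B)$ in $\overline{\rge}(C^*)$ follows from the density of $\dom(C) \cap \overline{\rge}(C^*)$ in $\overline{\rge}(C^*)$, which is a consequence of the core property obtained from the decomposition $\dom(C) = (\dom(C)\cap\kar(C)) \oplus (\dom(C)\cap\overline{\rge}(C^*))$. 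Moreover, $\rge(B) = \rge(C)$, because every $C\phi$ with $\phi \in \dom(C)$ equals $C$ applied to the projection of $\phi$ onto $\overline{\rge}(C^*)$; hence $\rge(B)$ is closed by hypothesis.

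Next I would observe that $\iota_{\textnormal{r},B} = \iota_{\textnormal{r},C}$ (as both are the embedding of $\overline{\rge}(C) = \overline{\rge}(B)$ into $H_1$), so the coercivity assumption on $a$ is exactly the hypothesis of Theorem~\ref{thm:varwp} applied to $B$. Then Theorem~\ref{thm:varwp} yields, for every $g \in \dom(B)^*$, a unique $u \in \dom(B) = \dom(C\iota_{\textnormal{r},C^*})$ with $\langle aBu, Bv\rangle = g(v)$ for all $v \in \dom(B)$, together with the representation $u = \mathcal{B}^{-1}(\iota_{\textnormal{r},B}^* a \iota_{\textnormal{r},B})^{-1}(\mathcal{B}^\diamond)^{-1} g$, where $\mathcal{B}$ coincides with the $\mathcal{C}$ of the statement.

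Finally, identifying $v \in \dom(C\iota_{\textnormal{r},C^*})$ with $\iota_{\textnormal{r},C^*}v \in \dom(C)$, we have $Bv = Cv$, so the equation $\langle aBu, Bv\rangle = g(v)$ is exactly $\langle aCu, Cv\rangle = g(v)$. Taking $g = f$ gives the result. The main subtlety I anticipate is the density and closed-range bookkeeping for $B$: one has to check that $\dom(C\iota_{\textnormal{r},C^*})$ really is dense in $\overline{\rge}(C^*)$ and that testing against $v \in \dom(C\iota_{\textnormal{r},C^*})$ rather than the whole $\dom(C)$ loses no information, which is ensured precisely because $\rge(B) = \rge(C)$ so that the orthogonal projection of $aCu$ onto $\overline{\rge}(C)$ is already captured by the restricted class of test functions.
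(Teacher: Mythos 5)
Your proposal is correct and follows exactly the same route as the paper, which simply states that the result follows from Theorem~\ref{thm:varwp} applied to $B = C\iota_{\textnormal{r},C^*}$. You have supplied the verifications (injectivity, closedness, density, $\rge(B)=\rge(C)$, $\iota_{\textnormal{r},B}=\iota_{\textnormal{r},C}$) that the paper leaves implicit, and they are all accurate.
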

\begin{proof}
 The assertion follows from Theorem \ref{thm:varwp} applied to $B=C\iota_{\textnormal{r},C^*}$. 
\end{proof}

\begin{example}\label{ex:np1} Let $\Omega\subseteq \mathbb{R}^3$ be open, bounded with continuous boundary. Recall the differential operators from Example \ref{ex:diffo}.

(a) A standard application of Theorem \ref{thm:varwp2} would be the homogeneous Neumann boundary value problem, that is,
\[
    \langle a \grad u,\grad \phi\rangle = f(\phi)\quad (\phi\in H^1_{\bot}(\Omega)),
\]
for $a\in L(L^2(\Omega)^3)$ with $\Re a \geq \alpha$ for some $\alpha>0$ in the sense of positive definiteness and $H^1_{\bot}(\Omega)\coloneqq \{ u\in H^1(\Omega); \int_\Omega u = 0\}$ and $f\in \big(H^1_{\bot}(\Omega)\big)^*$. In fact, in order to solve this equation for $u\in H^1_\bot(\Omega)$ one applies Theorem \ref{thm:varwp2} to $H_0=L^2(\Omega)$, $H_1=L^2(\Omega)^3$ and $C=\grad$ with $\dom(C)=H^1(\Omega)$. Note that the positive definiteness condition for $a$ is trivially satisfied. Moreover, since $\Omega$ has continuous boundary, by Example \ref{ex:diffo}(b1) in conjunction with Proposition \ref{prop:comelm}(e), we obtain that $\rge(\grad)\subseteq L^2(\Omega)^3$ is closed. It remains to show that $\dom(C\iota_{r,C^*})=H^1_\bot(\Omega)$. For this we observe that \begin{align*}\rge(C^*) & =\rge({\grad}^*)=\rge(-\interior{\dive})\\ &= \kar(\grad)^\bot = \{ u \in L^2(\Omega); \langle u, \1\rangle = 0 \} = \{ u\in L^2(\Omega); \int_\Omega u = 0\}\eqqcolon L^2_\bot(\Omega).\end{align*} Hence, $\dom(C\iota_{r,C^*})= H^1(\Omega)\cap L^2_\bot(\Omega)= H^1_\bot(\Omega)$. We emphasise, using the notation from Theorem \ref{thm:varwp2}, that $\mathcal{C}\colon H^1_\bot(\Omega) \to \rge(\grad), u\mapsto \grad u$ is a topological isomorphism.

(b) Assume in addition that $\Omega$ is simply connected, and that $\Omega$ is a bounded, weak Lipschitz domain. Let again $a\in L(L^2(\Omega)^3)$ be strictly positive definite. By Example \ref{ex:diffo}(b1), we may apply Theorem~\ref{thm:varwp2} to $H_0=L^2(\Omega)^3$, $H_1=L^2(\Omega)^3$ and $C=\curl$ with $\dom(C)=H(\curl,\Omega)$. We define \[H_{\textrm{sol}}(\curl,\Omega)\coloneqq \{ q \in H(\curl,\Omega); \interior{\dive} q=0\}\] endowed with the norm from $H(\curl,\Omega)$. 
Now, by Theorem~\ref{thm:varwp2},  for all $g\in H_{\textrm{sol}}(\curl,\Omega)^*$ there exists a unique $v\in H_{\textrm{sol}}(\curl,\Omega)$ such that
\[
   \langle a \curl v,\curl \psi \rangle = g(\psi)\quad(\psi \in H_{\textrm{sol}}(\curl,\Omega)). 
\]
Indeed, the only thing to prove is that $\dom(C\iota_{r,C^*})=H_{\textrm{sol}}(\curl,\Omega)$. For this, we compute
\[
   \rge(C^*)=\rge(\interior{\curl}) = \kar(\interior{\dive}),
\]where in the last equality we have used that $\Omega$ is simply connected in order that $(\grad,\curl)$ and, hence, $(\interior{\curl},\interior{\dive})$ is exact, see also Examples~\ref{ex:diffo} and \ref{ex:dfnf}.
So, $\dom(C\iota_{r,C^*})=H(\curl,\Omega)\cap \kar(\interior{\dive})=H_{\textrm{sol}}(\curl,\Omega)$. In the situation discussed here, we have with the notation from Theorem~\ref{thm:varwp2}, $\mathcal{C}\colon H_{\textrm{sol}}(\curl,\Omega) \to \rge(\curl), q\mapsto \curl q$, which again yields a topological isomorphism. By construction, it follows that $\mathcal{C}^\diamond \colon \rge(\curl) \to H_{\textrm{sol}}(\curl,\Omega)^*$ is an extension of $\interior{\curl}|_{\dom(\interior{\curl})\cap \rge(\curl)}$ to the whole of $\rge(\curl)$. Moreover, recall that also $\mathcal{C}^\diamond$ is a topological isomorphism. 

(c) The example in (b) applies verbatim also to $C=\interior{\curl}$. In this case, however, one has to assume that $\mathbb{R}^3\setminus\Omega$ is connected in order to obtain \[
\dom(C\iota_{r,C^*})=H_{0,\textrm{sol}}(\curl,\Omega)\coloneqq \{q \in \dom(\interior{\curl}); \dive q =0\}.\]

(d) The connectedness assumptions in (b) and (c) can be dispensed with to the effect that the respective expressions of $\dom(C\iota_{r,C^*})$ are less explicit. In fact, in case of (b), one has
\[
    \dom(C\iota_{r,C^*}) = \{ q \in H(\curl,\Omega); q \in \rge(\interior{\curl})\}
\]
and in case of (c), one has
\[
     \dom(C\iota_{r,C^*}) = \{ q \in H_0(\curl,\Omega); q \in \rge({\curl})\}.
\]
\end{example}

\begin{remark}\label{rem:varpde}
 We note that the variational formulation in Theorem~\ref{thm:varwp2} is (trivially) equivalent to 
 \[
    \langle a Cu,Cv\rangle = f(v) \quad(v\in \dom(\mathcal{C})).
 \]
 Moreover, we see that due to the solution formula and Proposition \ref{prop:varwp}, we obtain a third formulation of the latter variational equation:
 \[
    \mathcal{C}^\diamond \iota_{\textnormal{r},C}^*a\iota_{\textnormal{r},C}\mathcal{C} u =   f.
 \]
\end{remark}

We conclude this section with some additional elementary results needed for the analysis to come.

\begin{lemma}\label{lem:conpd} (a) Let $a\in L(H_1)$ with $\Re a\geq c$ for some $c>0$ in the sense of positive definiteness. Then $a^{-1}\in L(H_1)$, $\|a^{-1}\|\leq 1/c$ and $\Re \big(a^{-1}\big)\geq c/{\|a\|^2}$.

(b) Let $(a_n)_n$ in $L(H_1)$ with $\Re a_n\geq c$ for all $n\in\mathbb{N}$ and some $c>0$. Assume that $a_n\to a$ converges in the weak operator topology to some $a\in L(H_1)$. Then $\Re a\geq c$.

(c) Let $(a_n)_n$ in $L(H_1)$ bounded with $\Re a_n\geq c$ for all $n\in\mathbb{N}$ and some $c>0$. Assume that $a_n^{-1}\to b$ converges in the weak operator topology to some $b\in L(H_1)$. Then $b^{-1}\in L(H_1)$ with $\|b^{-1}\|\leq {\sup_{n\in\mathbb{N}}\|a_n\|^2}/c$, and $\Re \big(b^{-1}\big)\geq c$.

(d) Let $(a_n)_n$ in $L(H_1)$ with $\Re a_n\geq \alpha$ and $\Re \big(a_n^{-1}\big)\geq 1/ \beta$ for all $n\in\mathbb{N}$ and some $\alpha,\beta>0$. Assume that $a_n^{-1}\to b$ in the weak operator topology. Then we have for $a\coloneqq b^{-1}$ that $\Re a\geq \alpha$ and $\Re \big(a^{-1}\big)\geq 1/ \beta$. 
\end{lemma}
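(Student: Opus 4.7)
The plan is to prove the four assertions in order, bootstrapping each one from the previous.

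For (a), I start from $\Re\langle ax,x\rangle\ge c\|x\|^2$. Cauchy--Schwarz gives $\|ax\|\ge c\|x\|$, and the same bound holds for $a^*$ (since $\Re a^*=\Re a\ge c$), so $a$ is bounded below with dense range, hence bijective with $\|a^{-1}\|\le 1/c$. For the lower bound on $\Re(a^{-1})$, substitute $y=ax$:
\[
   \Re\langle a^{-1}y,y\rangle=\Re\langle x,ax\rangle\ge c\|x\|^2=c\|a^{-1}y\|^2\ge \frac{c}{\|a\|^2}\|y\|^2.
\]

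Part (b) is just pointwise passage to the limit: for every $x\in H_1$, $\langle a_nx,x\rangle\to\langle ax,x\rangle$ by the definition of the weak operator topology, so the inequality $\Re\langle a_nx,x\rangle\ge c\|x\|^2$ survives in the limit.

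Part (c) is the main step. Set $M\coloneqq \sup_n\|a_n\|<\infty$. By (a), each $a_n^{-1}$ satisfies $\Re(a_n^{-1})\ge c/\|a_n\|^2\ge c/M^2$, so (b) applied to the sequence $(a_n^{-1})$ yields $\Re b\ge c/M^2>0$. A second application of (a), this time to $b$, gives the existence of $b^{-1}\in L(H_1)$ together with $\|b^{-1}\|\le M^2/c$. The delicate point is to obtain the \emph{sharp} estimate $\Re(b^{-1})\ge c$ and not merely the weaker bound $c^3/M^2$ that a direct use of (a) produces. For this, I fix $y\in H_1$ and rewrite
\[
  \Re\langle y,a_n^{-1}y\rangle=\Re\langle a_n(a_n^{-1}y),a_n^{-1}y\rangle\ge c\|a_n^{-1}y\|^2.
\]
Now $a_n^{-1}y\rightharpoonup by$ by WOT convergence, so $\liminf\|a_n^{-1}y\|^2\ge\|by\|^2$ by weak lower semicontinuity of the norm, while the left-hand side converges to $\Re\langle y,by\rangle$. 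Hence
\[
   \Re\langle by,y\rangle\ge c\|by\|^2 \quad(y\in H_1).
\]
Substituting $y=b^{-1}z$ for arbitrary $z\in H_1$ gives $\Re\langle z,b^{-1}z\rangle\ge c\|z\|^2$, as required. This substitution step is what I expect to be the main obstacle, because it crucially relies on first having established invertibility of $b$ via the intermediate, weaker coercivity bound.

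Part (d) follows by combining the previous three. From $\Re(a_n^{-1})\ge 1/\beta$ and (a) applied to $a_n^{-1}$, we deduce $\|a_n\|=\|(a_n^{-1})^{-1}\|\le\beta$, so $(a_n)$ is bounded and (c) applies with $c=\alpha$: $a=b^{-1}$ exists in $L(H_1)$ and $\Re a=\Re(b^{-1})\ge\alpha$. Finally, (b) applied to the weakly operator convergent sequence $a_n^{-1}\to b=a^{-1}$ with uniform coercivity $1/\beta$ yields $\Re(a^{-1})\ge 1/\beta$.
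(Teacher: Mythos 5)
Your proof is correct and follows essentially the same route as the paper: (a) and (b) are direct consequences of Cauchy--Schwarz and pointwise passage to the limit, (d) is deduced from (c), and the sharp coercivity bound in (c) is obtained by rewriting $\Re\langle y,a_n^{-1}y\rangle=\Re\langle a_n(a_n^{-1}y),a_n^{-1}y\rangle$ and invoking weak lower semicontinuity of the norm along $a_n^{-1}y\rightharpoonup by$. The only cosmetic difference is the order of substitution: you first derive $\Re\langle by,y\rangle\ge c\|by\|^2$ for all $y$ and then set $y=b^{-1}z$, while the paper fixes $\phi$ at the outset and sets $\psi=b^{-1}\phi$ before running the same chain of inequalities.
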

\begin{proof}
 (a) is a straightforward consequence of the Cauchy--Schwarz inequality. (b) is easy. The assertion in (d) is a straightforward consequence of (c). Thus, we are left with showing (c). By part (a), we deduce that $\Re a_n^{-1}\geq c/{\sup_{n\in\mathbb{N}}\|a_n\|^2}$. By (b), we deduce that $\Re b\geq c/ {\sup_{n\in\mathbb{N}}\|a_n\|^2}$. This ensures $b^{-1}\in L(H_1)$ and that $\|b^{-1}\|\leq {\sup_{n\in\mathbb{N}}\|a_n\|^2}/c$. Finally, let $\phi\in H_1$ and put $\psi\coloneqq b^{-1}\phi$ as well as $\phi_n\coloneqq a_n^{-1}\psi$. Then we compute
\begin{align*}
  \Re \langle b\phi,\phi\rangle & = \Re \langle \psi, b^{-1}\psi\rangle \\ 
  & = \lim_{n\to\infty} \Re \langle \psi,a_n^{-1}\psi\rangle \\
  & = \liminf_{n\to\infty} \Re \langle a_n\phi_n,\phi_n\rangle \\
  & \geq \liminf_{n\to\infty}  c\langle \phi_n,\phi_n\rangle \\
    & \geq  c\langle \phi,\phi\rangle,
\end{align*}
where in the last step we used that $\phi_n\rightharpoonup \phi$ and so $\|\phi\|\leq \liminf_{n\to\infty} \|\phi_n\|$.
\end{proof}

The following result will be of importance later on, when we compare local $H$-convergence to nonlocal $H$-convergence.

\begin{proposition}\label{prop:invdis} Let $H$ be a Hilbert space. Then the following conditions are equivalent:
\begin{enumerate}
  \item[(i)] $H$ is finite-dimensional.
  \item[(ii)] For any bounded sequence $(a_n)_n$ in $L(H)$ such that $\Re a_n\geq \alpha$ for all $n\in\mathbb{N}$ and some $\alpha>0$, $a\in L(H)$ invertible, we have the following equivalence:
  \begin{multline*}
     (a_n)_n \to a \text{ in the weak operator topology} \\ \iff (a_n^{-1})_n \to a^{-1} \text{ in the weak operator topology}.
  \end{multline*}
\end{enumerate}
\end{proposition}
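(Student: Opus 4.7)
The plan is to handle the two implications separately.

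For $(i)\Rightarrow(ii)$, when $\dim H<\infty$ the algebra $L(H)$ is itself finite-dimensional, so weak operator, strong operator and norm topologies all coincide on $L(H)$. Inversion on the open subset of invertible operators is rational and hence norm-continuous (Cramer's rule), while the hypothesis $\Re a_n\geq\alpha$ together with Lemma~\ref{lem:conpd}(a) yields a uniform bound on $\|a_n^{-1}\|$, ruling out any pathology. Consequently $a_n\to a$ in the weak operator topology is simply norm convergence, which propagates under inversion; the reverse direction is symmetric.

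For $(ii)\Rightarrow(i)$, I would argue by contraposition and exhibit an explicit counterexample when $\dim H=\infty$. Fix a unit vector $e_1\in H$ together with an orthonormal sequence $(e_n)_{n\geq 2}$ in $\{e_1\}^{\perp}$, and set
\[
  a_n \coloneqq 2\,\id_H + e_1\otimes e_n + e_n\otimes e_1,\qquad n\geq 2,
\]
with the convention $(e\otimes f)x\coloneqq \langle x,f\rangle\,e$. By Cauchy--Schwarz, $\bigl|2\Re(\langle x,e_n\rangle\overline{\langle x,e_1\rangle})\bigr|\leq \|x\|^2$, so $a_n\geq \id_H$ in the sense of positive definiteness, while $\|a_n\|\leq 4$. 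Since $(e_n)$ tends weakly to $0$, a direct pairing gives $a_n\to a\coloneqq 2\,\id_H$ in the weak operator topology, and $a$ is invertible with $a^{-1}=\tfrac12\,\id_H$, so the backward half of the biconditional will be vacuous to inspect.

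The decisive step is computing the weak-operator limit of $a_n^{-1}$. Each $a_n$ preserves the two-dimensional subspace $\lin\{e_1,e_n\}$, on which it is represented by $\bigl(\begin{smallmatrix}2&1\\1&2\end{smallmatrix}\bigr)$ with inverse $\tfrac13\bigl(\begin{smallmatrix}2&-1\\-1&2\end{smallmatrix}\bigr)$, while on the orthogonal complement it acts as $2\,\id$. Decomposing arbitrary $x,y\in H$ along these components and pairing through $a_n^{-1}$, every contribution carrying a factor $\langle x,e_n\rangle$ or $\langle e_n,y\rangle$ vanishes as $n\to\infty$, whereas an $n$-independent rank-one remainder survives, giving
\[
  a_n^{-1}\ \longrightarrow\ \tfrac12\,\id_H + \tfrac16\,(e_1\otimes e_1) \quad\text{in the weak operator topology}.
\]
This limit differs from $a^{-1}=\tfrac12\,\id_H$, which falsifies the forward half of the biconditional in~(ii) and thereby establishes the contrapositive.

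The main obstacle to watch for is designing a perturbation that is simultaneously uniformly positive (which forces the symmetric coupling of $e_1$ with $e_n$), uniformly bounded, and whose inverse retains a persistent rank-one correction anchored at~$e_1$ rather than dissipating in the weak limit. The $2\times 2$ inversion on the invariant subspace handles all three demands at once, and the rest of the argument reduces to bookkeeping of inner products.
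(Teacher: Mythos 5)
Your proof is correct, and for the substantial direction $(ii)\Rightarrow(i)$ it takes a genuinely different route from the paper. The paper reduces to $H=L^2(0,1)$ and takes the classical periodic-homogenization counterexample: multiplication operators $a_n(x)=b(nx)$ with $b=\1_{(0,1/2)}+\tfrac12\1_{(1/2,1)}$, where \cite[Theorem 2.6]{Cioranescu1999} gives $a_n\rightharpoonup\int b=\tfrac34$ in $\sigma(L^\infty,L^1)$ while $a_n^{-1}\rightharpoonup\int b^{-1}=\tfrac32\neq\tfrac43$. That choice is thematically natural here (it is \emph{the} motivating phenomenon of the whole paper) but rests on a nontrivial imported fact about weak* limits of rapidly oscillating periodic functions, and it implicitly requires the reduction to the separable subspace. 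Your construction $a_n=2\,\id+e_1\otimes e_n+e_n\otimes e_1$ works directly in any infinite-dimensional $H$, uses only that an orthonormal sequence is weakly null, and makes the loss of continuity of inversion completely transparent via a $2\times 2$ block computation on the invariant plane $\lin\{e_1,e_n\}$; your arithmetic ($a_n\geq\id$, $\|a_n\|\leq 4$, $a_n\to 2\,\id$ in WOT, $a_n^{-1}\to\tfrac12\,\id+\tfrac16\,e_1\otimes e_1\neq\tfrac12\,\id$) checks out. So your argument is more elementary and self-contained, at the cost of losing the explicit connection to the homogenisation example that motivates the surrounding discussion. The direction $(i)\Rightarrow(ii)$ is handled essentially identically in both proofs (WOT $=$ norm topology in finite dimensions).
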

\begin{proof}
 The statement (i) implies (ii) since then the weak operator topology on $L(H)$ coincides with the norm topology.
 
 For the statement (ii) implies (i), it suffices to provide a counterexample for $H=L^2(0,1)$. Furthermore, it is elementary to see that if we identify any $a\in L^\infty(0,1)$ with the corresponding multiplication operator on $L^2(0,1)$, that the weak operator topology on $L^\infty(0,1)$ coincides with $\sigma(L^\infty,L^1)$, that is, the weak*-topology on $L^\infty(0,1)$. With these preparations let $b\coloneqq \1_{(0,1/2)}+\frac{1}{2}\1_{(1/2,1)}$, where $\1_K$ denotes the characteristic function of a set $K$. Periodically extending $b$ to the whole of $\R$, we put $a_n\coloneqq (x\mapsto b(n\cdot x))$. By \cite[Theorem 2.6]{Cioranescu1999}, we deduce that $a_n \to (\int b)\1_{(0,1)}=\frac{3}{4}\1_{(0,1)}\eqqcolon a$ in $\sigma(L^\infty,L^1)$ as $n\to \infty$. On the other hand, $a_n^{-1} \to \big(\int_{(0,1)} \frac{1}b\big)\1_{(0,1)}=\frac{3}{2}\1_{(0,1)}\neq a^{-1}$, which shows that (ii) is false.
\end{proof}
\section{Nonlocal $H$-convergence for exact sequences}\label{sec:nonH}

As in the previous section, we assume that $A_0$ and $A_1$ are densely defined, closed linear operators from $H_0$ to $H_1$ and $H_1$ to $H_2$, respectively. We shall assume that $(A_0,A_1)$ is closed and exact. Note that then by Proposition \ref{prop:comelm} $(A_1^*,A_0^*)$ is closed and exact, as well. Furthermore, we have the  following orthogonal decompositions for $H_1$:
\begin{equation}\label{eq:od}
   H_1 = \rge(A_0)\oplus\kar(A_0^*) = \rge(A_0)\oplus\rge(A_1^*)= \kar(A_1)\oplus\rge(A_1^*)= \kar(A_1)\oplus\kar(A_0^*).
\end{equation}
For the example cases treated in Example \ref{ex:diffo}, the decompositions expressed in \eqref{eq:od} are abstract variants of Helmholtz decompositions, as it will be exemplified next.
\begin{example}\label{ex:Helm}
 Let $\Omega\subseteq \mathbb{R}^3$ be a bounded weak Lipschitz domain.
 \begin{enumerate}
 \item[(a)] Assume in addition that ${\Omega}$ is simply connected. Then by Example \ref{ex:diffo}, we obtain that $(A_0,A_1)=(\grad,\curl)$ is exact and closed. Thus, we obtain from \eqref{eq:od}
 \[
     L^2(\Omega)^3 =\rge(\grad)\oplus \kar(\interior{\dive}) = \rge(\grad)\oplus \rge(\curl^*) = \rge(\grad)\oplus \rge(\interior{\curl}).
 \]
 As a consequence, any $q\in L^2(\Omega)^3$ decomposes into $q = \grad \phi+\interior{\curl}\psi$ for some $\phi\in H_\bot^1(\Omega)$ and $\psi \in H_{0,\textrm{sol}}(\curl,\Omega)$. Note that $\phi$ and $\psi$ are uniquely determined and depend continuously on $q$.
 \item[(b)] If the complement of $\Omega$ is connected, we infer by Example \ref{ex:diffo} that $(A_0,A_1)=(\interior{\grad},\interior{\curl})$ is closed and exact. As a consequence of equation \eqref{eq:od}, we obtain
 \[
    L^2(\Omega)^3 = \rge(\interior{\grad}) \oplus \kar({\dive}) = \rge(\interior{\grad})\oplus \rge(\curl).
 \] 
 Thus, similar to (a), for any $q\in L^2(\Omega)^3$, we find uniquely determined $\phi\in H_0^1(\Omega)$ and $\psi\in H_{\textrm{sol}}(\curl,\Omega)$ such that $q= \interior{\grad}\phi +\curl\psi$. The `potentials' $\phi$ and $\psi$ depend continuously on $q$.
 \item[(c)] Let $\Omega$ be simply connected with $C^1$-boundary and $\mathbb{R}^3\setminus {\Omega}$ connected. Then, by (a) and (b), we have
 \[
    L^2(\Omega)^3 = \rge(\interior{\grad}) \oplus \rge(\curl) = \rge(\grad)\oplus \rge(\interior{\curl}).
 \]
 Denoting $V\coloneqq \rge(\interior{\grad})^\bot\cap \rge(\grad)$, we obtain
 \[
    L^2(\Omega)^3 = \rge(\grad)\oplus \rge(\interior{\curl}) = \rge(\interior{\grad})\oplus V \oplus \rge(\interior{\curl})=\rge(\interior{\grad}) \oplus \rge(\curl),
 \]
 which implies that $V\oplus \rge(\interior{\curl}) = \rge(\curl)$. For later use, we shall describe $V$ in more detail as follows. Our main aim is to show that $V$ is infinite-dimensional. For this, let $q \in \rge(\grad)$. Then we find a uniquely determined $\psi_q\in H_\bot^1(\Omega)$ such that $q=\grad \psi_q$. In fact, this follows from Example \ref{ex:np1}(a) since $H_\bot^1(\Omega)\to \rge(\grad), u\mapsto \grad u$ is a topological isomorphism. Hence, $q\in V$ if and only if for all $\phi\in H_0^1(\Omega)$
 \[
    \langle \interior{\grad} \phi, \grad \psi_q \rangle = 0,
 \]
 which in turn is equivalent to
 \[
     \dive \grad \psi_q = 0. 
 \]
 Thus,
 \[
     V = \{ \grad \psi; \psi \in H_\bot^1(\Omega), \Delta \psi = 0\}.
 \]
 Recall $\gamma_{\textrm{n}}\colon H(\dive,\Omega)\to H^{-1/2}(\Omega)$ to be the normal trace. Since $\Omega$ has $C^1$-boundary, we find an infinite set $W\subseteq C^2(\overline{\Omega})$ with the property that for any $v,w\in W$ with $v\neq w$ we have $\spt_{\partial\Omega} (\textrm{n}\cdot \grad v),\spt_{\partial\Omega} (\textrm{n}\cdot \grad w)\neq \emptyset$ and $\spt_{\partial\Omega} (\textrm{n}\cdot \grad v) \cap \spt_{\partial\Omega} (\textrm{n}\cdot \grad w)=\emptyset$, where $\spt_{\partial\Omega} f$ denotes the support of a (continuous) function $f$ on $\partial\Omega$. Next, using Example \ref{ex:np1}(a) for any $v \in W$, we may solve for $w\in H^1_\bot(\Omega)$ such that 
 \begin{equation}\label{eq:np2}
    \langle \grad w, \grad \phi\rangle = \langle \grad v, \grad \phi\rangle\quad(\phi\in H_\bot^1(\Omega)).
 \end{equation}
 Then $\grad w \in \dom(\interior{\dive})$ and so $\gamma_{\textnormal{n}}(\grad w)=0$. 
 We put $u_v\coloneqq v-w$. Then $\gamma_\textnormal{n}(\grad u_v) = \gamma_{\textnormal{n}}(\grad v) = \textrm{n}\cdot\grad v|_{\partial\Omega}$. Moreover, \eqref{eq:np2} (trivially) extends to all $\phi \in H^1(\Omega)\supseteq H_0^1(\Omega)$. Hence, for all $\phi\in H_0^1(\Omega)$
 \[
    \langle \grad u_v,\grad \phi\rangle = \langle \grad (v-w),\grad \phi\rangle = \langle \grad v,\grad \phi\rangle -\langle \grad v,\grad \phi\rangle =0.
 \]Thus, $ \grad u_v \in V$. Since $\lin\{ \textnormal{n}\cdot \grad v; v\in W\}$ is infinite-dimensional, we infer the same for $\lin\{\grad u_v; v\in W\}\subseteq V$. Hence, $V$ is infinite-dimensional, which concludes this example.
 \end{enumerate}
\end{example}

Using the notation $\iota_{\textnormal{r},C}$ for densely defined closed linear operators $C\colon \dom(C)\subseteq H_0\to H_1$ from the previous section, we may define $a_{00}\coloneqq \iota_{\textnormal{r},A_0}^*a\iota_{\textnormal{r},A_0} \in L({\rge}(A_0))$ and $a_{11}\coloneqq \iota_{\textnormal{r},A_1^*}^*a\iota_{\textnormal{r},A_1^*} \in L({\rge}(A_1^*))$. The set of admissible (nonlocal) coefficients for which we discuss the notion of nonlocal $H$-convergence is described next. For $\alpha,\beta>0$, we define
\begin{align*}
   \mathcal{M}(\alpha,\beta,(A_0,A_1)) \coloneqq \{ a\in L(H_1);&  \Re a_{00} \geq \alpha 1_{A_0}, \Re a_{00}^{-1} \geq (1/\beta) 1_{A_0},\\ 
    & a \text{ continuously invertible},\\
    & \Re (a^{-1})_{11} \geq (1/\beta) 1_{A_1^*}, \Re (a^{-1})^{-1}_{11} \geq \alpha 1_{A_1^*} \},
\end{align*}
where $1_{A_0}$ and $1_{A_1^*}$ are the identity operators in $\rge(A_0)$ and $\rge(A_1^*)$, respectively.

We shall present a possibly nonlocal coefficient in the following example.

\begin{example}\label{ex:nlc} Let $\Omega\subseteq \mathbb{R}^3$ be a bounded, simply connected, weak Lipschitz domain with connected complement, $0<\alpha \leq 1 \leq \beta $. Then using Example \ref{ex:diffo} and \ref{ex:dfnf} both $(\interior{\grad},\interior{\curl})$ and $(\grad,\curl)$ are exact. Moreover, according to Example \ref{ex:Helm}(c), we have for some $V$ the decomposition
\[
   L^2(\Omega)^3 = \rge(\interior{\grad})\oplus V \oplus \rge(\interior{\curl}).
\]
Denote by $b \in L(V)$ an operator with $\Re b\geq \alpha$ and $\Re b^{-1}\geq 1/\beta$. Then
$a\coloneqq \pi_{V^\bot} + \iota_V b \iota_V^* \in \mathcal{M}(\alpha,\beta,(\interior{\grad},\interior{\curl})) \cap \mathcal{M}(\alpha,\beta,({\grad},{\curl}))$, where $\pi_{V^\bot}$ denotes the orthogonal projection onto $V^\bot$ and $\iota_V \colon V \hookrightarrow L^2(\Omega)^3$

Indeed, for $(A_0,A_1)=(\interior{\grad},\interior{\curl})$ we have $a_{00}=a_{00}^{-1}=1_{\interior{\grad}}$,\[(a^{-1})_{11} = \pi_{\rge(\interior{\curl})} + \iota_{\textnormal{r},\curl}^*\iota_V b^{-1} \iota_V^*\iota_{\textnormal{r},\curl} \geq (1/\beta)1_{\curl},\] and $(a^{-1})_{11}^{-1}\geq \alpha 1_{\curl}$.

For $(A_0,A_1)=(\grad,\curl)$ we have $(a^{-1})_{11} = (a^{-1})_{11}^{-1}= 1_{\interior{\curl}}$, \[a_{00} = \pi_{\rge(\interior{\grad})} + \iota_{\textnormal{r},\grad}^*\iota_V b \iota_V^*\iota_{\textnormal{r},\grad}\geq \alpha 1_{\grad},\] and, similarly, $a_{00}^{-1} \geq  (1/\beta)1_{\grad}$. We shall revisit this example in Example \ref{ex:dpbc} below.
\end{example}

Note that since $(A_0,A_1)$ is closed and exact, both $A_0$ and $A_1$ satisfy the conditions imposed on $C$ in Theorem~\ref{thm:varwp2}. Thus, the equations in the following definitions are uniquely solvable by Theorem~\ref{thm:varwp2}. We will use \[\mathcal{A}_0 \colon \dom(A_0 \iota_{\textnormal{r},A_0^*}) \to \rge(A_0),u\mapsto A_0 u\]and, similarly, \[\mathcal{A}_1^*\colon \dom(A_1^* \iota_{\textnormal{r},A_1})\to \rge(A_1^*),v\mapsto A_1^*v.\]

\begin{example}\label{ex:cala} We shall specify the operators $\mathcal{A}_0$ and $\mathcal{A}_1^*$ in two particular cases. Recall that $\iota_{\textnormal{r},A_0^*}$ is the canonical embedding from $\rge(A_0^*)=\kar(A_0)^\bot$ into $H_0$.

(a) Let $\Omega\subseteq \mathbb{R}^3$ be a bounded, weak Lipschitz domain with connected complement. Then, we recall that $(A_0,A_1)=(\interior{\grad},\interior{\curl})$ with $H_0= L^2(\Omega)$, $H_1=H_2=L^2(\Omega)^3$ is closed and exact; see Examples \ref{ex:diffo} and \ref{ex:dfnf}. In particular, using the results from Example \ref{ex:dp1}, we obtain that
\[
    \mathcal{A}_0 \colon H_0^1(\Omega)\to \rge(\interior{\grad}), u\mapsto \grad u
\]
and with Example \ref{ex:np1}(b) we get
\[
   \mathcal{A}_1^* \colon H_{\textrm{sol}}(\curl,\Omega) \to \rge(\curl), q\mapsto \curl q.
\]
Recall that we also have $\rge(\interior{\curl})=\kar(\interior{\dive})\supseteq H_{\textrm{sol}}(\curl,\Omega)$. Thus, $\mathcal{A}_0$ and $\mathcal{A}_1^*$ act the same way as $A_0$ and $A_1^*$, they are however arranged in the way that the domain and co-domain is restricted in order to make the calligraphic variants of the operators $A_0$ and $A_1^*$ be both topological isomorphisms.

(b) Let $\Omega\subseteq \mathbb{R}^3$ be a bounded, simply connected, weak Lipschitz domain. Then recall $(A_0,A_1)=(\grad,\curl)$ is closed and exact. In this case, we have
\[
     \mathcal{A}_0 \colon H_\bot^1(\Omega)\to \rge({\grad}), u\mapsto \grad u
\]
and
\[
     \mathcal{A}_1^* \colon H_{0,\textrm{sol}}(\curl,\Omega) \to \rge({\curl}), q\mapsto \curl q.
\]
\end{example}

\begin{definition} Let $(A_0,A_1)$ be exact and closed. Let $(a_n)_n$ in $M(\alpha,\beta,(A_0,A_1))$, $a\in L(H_1)$ continuously invertible. Then $(a_n)_n$ is called \emph{nonlocally $H$-convergent to $a$ with respect to $(A_0,A_1)$}, if the following statement holds: For all $f\in \dom(\mathcal{A}_0)^*$ and $g\in \dom(\mathcal{A}^*_1)^*$ let $(u_n)_n$ in $\dom(\mathcal{A}_0)$ and $(v_n)_n$ in $\dom(\mathcal{A}_1^*)$ satisfy
\[
 \langle a_n A_0 u_n, A_0 \phi\rangle  = f(\phi), \quad \langle a_n^{-1} A_1^* v_n,A_1^*\psi\rangle =g(\psi)\quad(n\in \mathbb{N}).
\]
for all $\phi\in\dom(\mathcal{A}_0)$, $\psi\in \dom(\mathcal{A}_1^*)$.
Then
\begin{itemize}
 \item $u_n\rightharpoonup u$ in $\dom(A_0)$ for some $u\in\dom(A_0)$;  
 \item $v_n\rightharpoonup v$ in $\dom(A_1^*)$ for some $v\in\dom(A_1^*)$;
 \item $a_n A_0 u_n \rightharpoonup a A_0 u$; $a_n^{-1} A_1^* v_n \rightharpoonup a^{-1} A_1^* v$;
 \item $u$ and $v$  satisfy
 \[
     \langle a A_0 u, A_0 \phi\rangle  = f(\phi) \text{ and } \langle a^{-1} A_1^* v,A_1^*\psi\rangle =g(\psi)
 \]for all $\phi\in\dom(\mathcal{A}_0)$, $\psi\in \dom(\mathcal{A}_1^*)$.
\end{itemize}
$a$ is called \emph{nonlocal $H$-limit} of $(a_n)_n$. 
If the choice of the exact complex $(A_0,A_1)$ is clear from the context, we shall also say that $(a_n)_n$ nonlocally $H$-converges to $a$, for short.
\end{definition}

Using the Examples \ref{ex:diffo} and \ref{ex:dfnf} together with the descriptions of the Hilbert spaces in Example \ref{ex:np1}, we shall formulate the notion of nonlocal $H$-convergence in the special case of $(\interior{\grad},\interior{\curl})$:

\begin{example} Let $\Omega\subseteq \mathbb{R}^3$ be an open, bounded, weak Lipschitz domain with connected complement. Then with $H_0=L^2(\Omega)$, $H_1=H_2=L^2(\Omega)^3$, we have that $(A_0,A_1)=(\interior{\grad},\interior{\curl})$ is closed and exact. Let $\alpha,\beta>0$ and $(a_n)_n$ in $\mathcal{M}(\alpha,\beta,(\interior{\grad},\interior{\curl}))$. Then $(a_n)_n$ nonlocally $H$-converges to $a\in \mathcal{M}(\alpha,\beta,(\interior{\grad},\interior{\curl}))$ with respect to $(\interior{\grad},\interior{\curl})$, if the following holds:

For all $f\in H^{-1}(\Omega)$ and $g\in H_{\textnormal{sol}}(\curl,\Omega)^*$ let $(u_n)_n$ in $H_0^1(\Omega)$ and $(v_n)_n$ in $H_{\textnormal{sol}}(\curl,\Omega)$ satisfy
\[
 \langle a_n \grad u_n,\grad \phi\rangle  = f(\phi) \quad \langle a_n^{-1} \curl v_n,\curl\psi\rangle =g(\psi)\quad(n\in \mathbb{N}).
\]
for all $\phi\in H_0^1(\Omega)$, $\psi\in H_{\textnormal{sol}}(\curl,\Omega)$.
Then
\begin{itemize}
 \item $u_n\rightharpoonup u$ in $H_0^1(\Omega)$ for some $u\in H_0^1(\Omega)$;  
 \item $v_n\rightharpoonup v$ in $H(\curl,\Omega)$ for some $v\in H(\curl,\Omega)$;
 \item $a_n \grad u_n \rightharpoonup a \grad u$; $a_n^{-1} \curl v_n \rightharpoonup a^{-1} \curl v$ both convergences hold weakly in $L^2(\Omega)^3$;
 \item $u$ and $v$  satisfy
 \[
     \langle a \grad u, \grad \phi\rangle  = f(\phi) \text{ and } \langle a^{-1} \curl v,\curl\psi\rangle =g(\psi)
 \]for all $\phi\in H_0^1(\Omega)$, $\psi\in H_{\textnormal{sol}}(\curl,\Omega)$. 
 \end{itemize}
\end{example}

\begin{remark}
  We have formulated the notion of nonlocal $H$-convergence for exact and closed complexes only. There are two desirable steps of generalisation. A first one is to consider finite-dimensional `cohomology groups' $\kar(A_0^*)\cap \kar(A_1)$. A prime application of this are compact complexes. Thus, the definition of nonlocal $H$-convergence needs to take into account coefficient sequences $(a_n)_n$ acting on or mapping into the finite-dimensional space $\kar(A_0^*)\cap \kar(A_1)$. In applications to concrete complexes, this setting allows for compact complexes and in particular for more general topologies of the underlying domain $\Omega$ in Example \ref{ex:diffo} and Example \ref{ex:dfnf}. A second step is to consider non-closed complexes. In the light of Example \ref{ex:diffo}, this would pave the way to unbounded $\Omega$.
\end{remark}

We shall analyse the relationship to local $H$-convergence of multiplication operators in Section \ref{sec:metco}. This requires further theoretical insight. However, before we discuss more abstract theory for the notion just introduced, we explicitly consider the particular case of (periodic) multiplication operators, which perfectly fits into the scheme above. In the following, we will identify $a\in M(\alpha,\beta,Y)$ (see \eqref{eq:Mab} for the definition) with the corresponding multiplication operator acting on $L^2(Y)^3$. 

\begin{example}\label{ex:1st} Let $Y=[0,1)^3$ and let $a\in L^\infty (\mathbb{R}^3)^{3\times 3}$ be $Y$-periodic. Assume there is $\alpha,\beta>0$ such that $a\in {M}(\alpha,\beta,Y)$. For $n\in\mathbb{N}$ we put $a_n\coloneqq (y
\mapsto a(n\cdot y))$. 

 Let $A_0=\iota_{0}$ and $A_1=1$ as in Example \ref{ex:tr} with $H=L^2(Y)^3$. Then it is easy to see that 
\[
    a_n\in \mathcal{M}(\alpha,\beta,(\iota_0,1))\quad(n\in\mathbb{N}).
\]
Next, let $f\in \dom(\mathcal{A}_0)^*=\{0\}^*=\{0\}$ and $g\in \dom(\mathcal{A}^*_1)^*=L^2(Y)^3$  and let $u_n\in \dom(A_0)\cap \kar(A_0)^\bot=\{0\}$ and $v_n\in \dom(A_1^*)\cap \kar(A_1^*)^\bot = L^2(Y)^3$ satisfy
\[
   A_0^\diamond a_n A_0 u_n = f, \quad (A_1^*)^\diamond a_n^{-1} A_1^* v_n = g.
\]
Note that the first equation is trivially satisfied, as $u_n=0$ and $f=0$. Moreover, note that $a_nA_0 u_n = 0$ for all $n\in\mathbb{N}$. The second equation implies $a_n^{-1}v_n =g$ and so
\[
   v_n = a_n g.
\]
By \cite[Theorem 2.6]{Cioranescu1999}, we deduce that $v_n = a_n g \rightharpoonup (\int_Y a)g\eqqcolon v$. Moreover, 
\[
a_n^{-1}v_n=g\to g=\Big(\int_Y a\Big)^{-1}v \quad(n\to \infty).
\]
Hence, $(a_n)_n$ nonlocally $H$-converges with respect to $(\iota_{0},1)$ to $\int_Y a$. 

\end{example}

A simple modification of Example \ref{ex:1st} shows that nonlocal $H$-convergence with respect to $(\iota_0,1)$ is precisely convergence of $(a_n)_n$ in the weak operator topology.

\begin{proposition}\label{prop:wot} Let $H$ be a Hilbert space. Let $\iota_0\colon \{0\}\hookrightarrow H, 0\mapsto 0$, $1_{H}\colon H \to H, x\mapsto x$ and let $\alpha,\beta>0$. Let $(a_n)_n$ in $\mathcal{M}(\alpha,\beta,(\iota_0,1_{H}))$. Let $a\in L(H)$ invertible. Then the following conditions are equivalent:
\begin{enumerate}
  \item[(i)] $a_n \to a$ in the weak operator topology as $n\to\infty$,
  \item[(ii)] $a_n \to a$ $H$-nonlocally with respect to $(\iota_0,1_{H})$ as $n\to\infty$.
\end{enumerate}
\end{proposition}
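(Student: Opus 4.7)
The plan is to unfold both notions in the special situation $A_0=\iota_0$, $A_1=1_H$ and observe that the variational equations reduce to trivialities that identify $v_n$ directly with $a_n g$.

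First I would note that $\dom(A_0)=\{0\}$, hence $\dom(\mathcal{A}_0)=\{0\}$ and $\dom(\mathcal{A}_0)^*=\{0\}$. Consequently, for any admissible $f$ and $(u_n)_n$ appearing in the definition of nonlocal $H$-convergence we must have $f=0$ and $u_n=0$, so the first equation and all conclusions involving $u_n$ and $A_0 u_n$ are automatic. Next, since $A_1=1_H$ is bounded with $\rge(A_1)=H$, one has $A_1^*=1_H$, $\iota_{\textnormal{r},A_1}=1_H$, so $\dom(\mathcal{A}_1^*)=H$, $\mathcal{A}_1^*=1_H$, and $\dom(\mathcal{A}_1^*)^*=H^*$. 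Thus, identifying $H^*$ with $H$ via Riesz, the second equation $\langle a_n^{-1}A_1^*v_n,A_1^*\psi\rangle=g(\psi)$ for all $\psi\in H$ is equivalent to $a_n^{-1}v_n=g$, i.e.
\[
  v_n = a_n g.
\]
Analogously, the limit relation $\langle a^{-1}A_1^*v,A_1^*\psi\rangle=g(\psi)$ reads $v=ag$.

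For the implication (i)$\Rightarrow$(ii), assume $a_n\to a$ in the weak operator topology. Fix $g\in H$ and set $v_n\coloneqq a_ng$. Then $v_n\rightharpoonup ag\eqqcolon v$ by the assumption, and $a_n^{-1}v_n=g=a^{-1}v$ for every $n$, so in particular $a_n^{-1}v_n\rightharpoonup a^{-1}v$. Together with the trivialities for $u_n$, this verifies all requirements of nonlocal $H$-convergence with respect to $(\iota_0,1_H)$.

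For the converse (ii)$\Rightarrow$(i), fix an arbitrary $g\in H$ and read it as an element of $\dom(\mathcal{A}_1^*)^*$ via Riesz. By hypothesis the sequence $v_n\coloneqq a_ng$ converges weakly to some $v\in H$ with $a^{-1}v=g$, i.e. $v=ag$. Hence $a_ng\rightharpoonup ag$, and since $g\in H$ was arbitrary this is precisely $a_n\to a$ in the weak operator topology. The only point requiring a moment of attention is to make sure that in (ii) the weak limit $v$ is guaranteed to exist and that $a^{-1}v=g$; but both are explicitly part of the conclusion of nonlocal $H$-convergence applied to this $g$. I expect no serious obstacle: the whole argument amounts to matching up the abstract definition with the degenerate structure of the trivial complex from Example~\ref{ex:tr}.
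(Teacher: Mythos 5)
Your proof is correct and follows essentially the same route as the paper: the paper's proof simply defers to the computation in Example~\ref{ex:1st}, which likewise reduces the definition to $v_n=a_ng\rightharpoonup ag$, whereas you spell out that unfolding explicitly and handle both implications directly. No gaps.
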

\begin{proof}
 The proof of (i)$\Rightarrow$(ii) follows almost literally the arguments outlined in Example \ref{ex:1st}. If (ii) holds, the conditions on nonlocal $H$-convergence imply that $v_n= a_n g$ converges weakly to $v= a g$ for all $g\in H$. This, however, implies (i). 
\end{proof}

The next example is a standard result in homogenisation, see e.g.~\cite[Lemma 4.5]{Jikov1994} and \cite[Theorem 6.1]{Cioranescu1999}.

\begin{example}\label{ex:2nd} Let $Y=[0,1)^3$ and let $a\in L^\infty (\mathbb{R}^3)^{3\times 3}$ be $Y$-periodic. Assume there is $\alpha,\beta>0$ such that $a\in {M}(\alpha,\beta,Y)$ with $a=a^*$. For $n\in\mathbb{N}$ we put $a_n\coloneqq \big(y
\mapsto a(n\cdot y)\big)$. Note that this particularly implies that both $(a_n)_n$ and $(a_n^{-1})_n$ are bounded. Let $H_0 = L^2(Y)$, $H_1=H_2=L^2(Y)^3$ and let $A_0=\interior{\grad}$, $A_1=\interior{\curl}$, and $\dive$ as well as $\curl$ as in Example \ref{ex:diffo} with $\Omega=\interior{Y}$. We may also use the results of the Examples \ref{ex:dp1} and \ref{ex:np1}. Then $(A_0,A_1)$ is exact and closed. (Note that exactness also follows directly with a Fourier series argument.) Moreover, it is plain that $a_n\in \mathcal{M}(\alpha',\beta',(\interior{\grad},\interior{\curl}))$ for some $0<\alpha'\leq \alpha \leq \beta \leq \beta'$. Let $f\in H^{-1}(Y)$, $g\in H_{\textrm{sol}}(\curl,Y)^*$. Let $(u_n)_n$ in $H^1_0(Y)$ and $(v_n)_n$ in $H_{\textrm{sol}}(\curl,Y)$  satisfy the following equations
\[
   -\dive a_n \interior{\grad} u_n = f,\quad \interior{\curl}a_n^{-1} \curl v_n =g,
\]
where in the latter equation, we slightly abused notation, also cf. Remark \ref{rem:varpde} and the concluding comments in Example \ref{ex:np1}(b). By \cite[Theorem 6.1]{Cioranescu1999}, we have that $(u_n)_n$ weakly converges to some $u \in H_0^1(\Omega)$ and there exists a constant coefficient matrix $a_{\textnormal{hom}}$ with the property that
\[
  -\dive a_{\textnormal{hom}} \interior{\grad} u = f. 
\] Moreover, we have $a_n\interior{\grad}u_n\rightharpoonup a_{\textnormal{hom}}\interior{\grad}u$ in $L^2(Y)^3$.  

Next, we set $w_n\coloneqq a_n^{-1} \curl v_n$. Note that by the solution formula in Theorem~\ref{thm:varwp2} applied to $C=\curl$ (see also Example~\ref{ex:np1}(b)), the sequence $(v_n)_n$ is bounded in $H_{\textrm{sol}}(\curl,Y)$ and by the boundedness of $(a_n^{-1})_n$, the sequence $(w_n)_n$ is bounded in $L^2(Y)^3$. Take a weakly convergent subsequence of $(w_n)_n$ (in $L^2(Y)^3$) and $(v_n)_n$ (in $H_{\textrm{sol}}(\curl,Y)$). Denote the corresponding limits by $w$ and $v$. We do not relabel the sequences. From $\interior{\curl}w_n = g$ and $\dive a_nw_n = 0$, it follows with  \cite[Lemma 4.5]{Jikov1994} that $a_nw_n\rightharpoonup a_{\textnormal{hom}}w=\curl v$. Hence, 
\[
   g = \text{w-}\lim_{n\to\infty} \interior{\curl} w_n = \interior{\curl} w=  \interior{\curl} a_{\textnormal{hom}}^{-1}\curl v.
\] 
Uniqueness of $v$ follows from Theorem~\ref{thm:varwp2} and the coercivity of $a_{\textnormal{hom}}$, see \cite[Section 6.3]{Cioranescu1999}.

All in all, we have shown that $a_n \to a_{\textnormal{hom}}$ $H$-nonlocally with respect to $(\interior{\grad},\interior{\curl})$.
\end{example}

We will show in Section \ref{sec:dct} that a result analogous to \cite[Lemma 4.5]{Jikov1994} characterises nonlocal $H$-convergence.

\begin{example}\label{ex:mcv} Let $\Omega\subseteq \mathbb{R}^3$ be an open, bounded weak Lipschitz domain with connected complement. Let $(k_n)_n$ be bounded in $L^\infty(\Omega\times \Omega)$ and assume that there is $0\leq \theta<1$ such that $\|k_n*\|_{L(L^2(\Omega)^3)}\leq\theta$ for all $n\in \mathbb{N}$, where we refer to Example \ref{ex:dp1nl} for a definition. With an argument similar to that in Example \ref{ex:dp1nl} it follows that
\[
   \Re (1 -k_n*)\geq (1-\theta)1_{L^2(\Omega)^3}\quad (n\in\mathbb{N}).
\]
In particular, one has $\Re \big((1 -k_n*)^{-1}\big)\geq (1/\beta') 1_{L^2(\Omega)^3}$ for some $\beta'>0$. Thus $(a_n)_n = (1-k_n*)_n$ is an eligible sequence in $\mathcal{M}(\alpha,\beta,(\interior{\grad},\interior{\curl}))$ for some $\alpha,\beta>0$. So the question is, under which circumstances can we show that
\[
   1-k_n*\to a\text{ $H$-nonlocally with respect to }(\interior{\grad},\interior{\curl}) \text{ as }n\to \infty
\]
for some $a\in \mathcal{M}(\alpha,\beta,(\interior{\grad},\interior{\curl}))$?
 Note that this question particularly deals with the operator $(1-k_n*)^{-1}=\sum_{\ell=0}^\infty (k_n*)^\ell$. For now, however, this operators is too complicated an object to deal with. In Section \ref{sec:dct}, we shall revisit such kind of coefficients using a criterion for nonlocal $H$-convergence detouring the computation of the inverse.
\end{example}

\begin{remark}
A quick comparison of the Examples \ref{ex:1st} and \ref{ex:2nd} shows that the nonlocal $H$-limit is \emph{not} independent of the underlying exact complex. In this line of reasoning it might not be expected that nonlocal $H$-convergence is independent of the considered boundary conditions either. In fact, we will show that nonlocal $H$-convergence indeed depends on the choice of boundary condition. This will be discussed in Example~\ref{ex:dpbc}. We refer also to Remark \ref{rem:neu} below on this matter.
\end{remark}

\section{Block matrix representation of nonlocal \\ $H$-convergence}\label{sec:block}

As in the previous section, we shall assume throughout that $A_0$ and $A_1$ are densely defined, closed linear operators from $H_0$ to $H_1$ and $H_1$ to $H_2$, respectively, with $(A_0,A_1)$ closed and exact.

Our first aim of this section is to characterise nonlocal $H$-convergence by means of convergence of operators in a block matrix representation. For this, we employ the orthogonal decomposition mentioned in \eqref{eq:od}. For $a\in L(H_1)$ we obtain
\begin{multline*}
   a= \begin{pmatrix} \iota_{\textnormal{r},A_0} & \iota_{\textnormal{r},A_1^*} \end{pmatrix}\begin{pmatrix}  \iota_{\textnormal{r},A_0}^* \\ \iota_{\textnormal{r},A_1^*}^* \end{pmatrix} a \begin{pmatrix} \iota_{\textnormal{r},A_0} & \iota_{\textnormal{r},A_1^*} \end{pmatrix}\begin{pmatrix}  \iota_{\textnormal{r},A_0}^* \\\iota_{\textnormal{r},A_1^*}^* \end{pmatrix}\\ =\begin{pmatrix} \iota_{\textnormal{r},A_0} & \iota_{\textnormal{r},A_1^*} \end{pmatrix}
   \begin{pmatrix} \iota_{\textnormal{r},A_0}^*a \iota_{\textnormal{r},A_0} & \iota_{\textnormal{r},A_0}^*a  \iota_{\textnormal{r},A_1^*} \\ \iota_{\textnormal{r},A_1^*}^*a \iota_{\textnormal{r},A_0} & \iota_{\textnormal{r},A_1^*}^*a  \iota_{\textnormal{r},A_1^*} \end{pmatrix}  \begin{pmatrix} \iota_{\textnormal{r},A_0}^* \\ \iota_{\textnormal{r},A_1^*}^* \end{pmatrix}\\ \eqqcolon \begin{pmatrix} \iota_{\textnormal{r},A_0} & \iota_{\textnormal{r},A_1^*} \end{pmatrix}
   \begin{pmatrix} a_{00}  & a_{01} \\ a_{10} & a_{11}  \end{pmatrix} \begin{pmatrix}  \iota_{\textnormal{r},A_0}^* \\\iota_{\textnormal{r},A_1^*}^* \end{pmatrix}.
\end{multline*}
We shall also define the unitary operator
\begin{equation}\label{eq:U}
   U \coloneqq\begin{pmatrix} \iota_{\textnormal{r},A_0} & \iota_{\textnormal{r},A_1^*} \end{pmatrix}\in L(\rge(A_1^*)\oplus \rge(A_0),H_1)
\end{equation}
In particluar, we then obtain
\[
  a = U \begin{pmatrix} a_{00} & a_{01} \\ a_{10} & a_{11}
\end{pmatrix} U^*
\]
With this notation at hand, we can state the first major result of this article. Keep in mind that the block operator matrix representation rests on the generalised Helmholtz decomposition in equation \eqref{eq:od}. 

\begin{theorem}[Characterisation of nonlocal $H$-convergence]\label{thm:chH} Let $\alpha,\beta>0$, $a \in L(H_1)$, and $(a_n)_n$ in $\mathcal{M}(\alpha,\beta,(A_0,A_1))$. Then the following conditions are equivalent:
\begin{enumerate}
  \item[(i)] $a$ is continuously invertible and $(a_n)_n$ nonlocally $H$-converges to $a$;
  \item[(ii)] $(a_{n,00}^{-1})_n$, $\big(a_{n,10}a_{n,00}^{-1}\big)_n$, $\big(a_{n,00}^{-1}a_{n,01}\big)_n$, and $\big(a_{n,11}-a_{n,10}a_{n,00}^{-1}a_{n,01}\big)_n$ converge in the respective weak operator topologies to $a_{00}^{-1}$, $a_{10}a_{00}^{-1}$, $a_{00}^{-1}a_{01}$, and $a_{11}-a_{10}a_{00}^{-1}a_{01}$. Moreover, we have $a\in \mathcal{M}(\alpha,\beta,(A_0,A_1))$.
\end{enumerate}
\end{theorem}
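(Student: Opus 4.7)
The approach is to translate both sides of the equivalence into statements about the block matrix representation of $a_n$ with respect to the orthogonal decomposition $H_1=\rge(A_0)\oplus\rge(A_1^*)$. The key observation is that by Theorem~\ref{thm:varwp2} and Proposition~\ref{prop:varwp}, the solutions featuring in the definition of nonlocal $H$-convergence take the explicit form $\mathcal{A}_0 u_n=a_{n,00}^{-1}(\mathcal{A}_0^\diamond)^{-1}f$ and $\mathcal{A}_1^* v_n=(a_n^{-1})_{11}^{-1}(\mathcal{A}_1^{*,\diamond})^{-1}g$, and consequently
\[
    a_n A_0 u_n = \iota_{\textnormal{r},A_0}(\mathcal{A}_0^\diamond)^{-1}f + \iota_{\textnormal{r},A_1^*}\, a_{n,10}a_{n,00}^{-1}(\mathcal{A}_0^\diamond)^{-1}f,
\]
with an analogous decomposition for $a_n^{-1}A_1^* v_n$. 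A standard block inversion computation in $L(\rge(A_0)\oplus\rge(A_1^*))$ identifies $(a^{-1})_{11}^{-1}$ with the Schur complement $a_{11}-a_{10}a_{00}^{-1}a_{01}$ and $(a^{-1})_{01}(a^{-1})_{11}^{-1}$ with $-a_{00}^{-1}a_{01}$, so the four quantities in (ii) are precisely those appearing (up to sign) in the two solution formulas and their Helmholtz projections.

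For (i)$\Rightarrow$(ii), the weak convergence $u_n\rightharpoonup u$ in $\dom(A_0)$ yields $\mathcal{A}_0 u_n\rightharpoonup \mathcal{A}_0 u$, and since $f\in\dom(\mathcal{A}_0)^*$ is arbitrary and $(\mathcal{A}_0^\diamond)^{-1}$ is an isomorphism onto $\rge(A_0)$, this is equivalent to $a_{n,00}^{-1}\to a_{00}^{-1}$ in the weak operator topology. Projecting $a_nA_0u_n\rightharpoonup aA_0u$ onto $\rge(A_1^*)$ by means of $\iota_{\textnormal{r},A_1^*}^*$ yields $a_{n,10}a_{n,00}^{-1}\to a_{10}a_{00}^{-1}$ in wot; the $\rge(A_0)$-projection only reproduces the tautology $(\mathcal{A}_0^\diamond)^{-1}f=(\mathcal{A}_0^\diamond)^{-1}f$. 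The same reasoning applied to the $v$-equation, with $a_n^{-1}$ replacing $a_n$ and $A_1^*$ replacing $A_0$, delivers $(a_n^{-1})_{11}^{-1}\to (a^{-1})_{11}^{-1}$ and $(a_n^{-1})_{01}(a_n^{-1})_{11}^{-1}\to (a^{-1})_{01}(a^{-1})_{11}^{-1}$ in wot, which by the Schur identities recorded above become the remaining two statements of (ii). Finally, $a\in\mathcal{M}(\alpha,\beta,(A_0,A_1))$ is obtained by applying Lemma~\ref{lem:conpd}(b) and~(d) to $(a_{n,00})_n$ and to the sequence of Schur complements.

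For (ii)$\Rightarrow$(i), Theorem~\ref{thm:varwp2} together with $a\in\mathcal{M}(\alpha,\beta,(A_0,A_1))$ guarantees well-posedness of the limit equations, so $u$ and $v$ are well-defined via the solution formulas. The convergence $u_n\rightharpoonup u$ follows at once from $a_{n,00}^{-1}\to a_{00}^{-1}$ evaluated at the fixed element $(\mathcal{A}_0^\diamond)^{-1}f$ together with continuity of $\mathcal{A}_0^{-1}$, and $a_nA_0u_n\rightharpoonup aA_0u$ is read off from the displayed decomposition above, whose first summand is constant in $n$ and whose second converges weakly by hypothesis. The analogous argument on the $v$-side uses the Schur identities in the reverse direction to convert the hypotheses on $a_{n,00}^{-1}a_{n,01}$ and $a_{n,11}-a_{n,10}a_{n,00}^{-1}a_{n,01}$ into wot convergence of $(a_n^{-1})_{01}(a_n^{-1})_{11}^{-1}$ and $(a_n^{-1})_{11}^{-1}$, which then plug into the Helmholtz decomposition of $a_n^{-1}A_1^* v_n$ as before.

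The main obstacle I anticipate is the bookkeeping of the block-matrix Schur identities and keeping careful track of which operators act on which summand of $\rge(A_0)\oplus\rge(A_1^*)$. A secondary subtlety is ensuring that the wot convergences obtained in the forward direction are genuine convergences on all of $\rge(A_0)$ respectively $\rge(A_1^*)$, which rests on $f$ and $g$ ranging over the full dual spaces together with the surjectivity of $(\mathcal{A}_0^\diamond)^{-1}$ and $(\mathcal{A}_1^{*,\diamond})^{-1}$ onto those summands.
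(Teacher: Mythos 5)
Your proposal is correct and follows essentially the same route as the paper's own proof: exploit the solution formulas from Theorem~\ref{thm:varwp2}, decompose $a_nA_0u_n$ and $a_n^{-1}A_1^*v_n$ along $\rge(A_0)\oplus\rge(A_1^*)$, invoke the Schur identities from Lemma~\ref{lem:boinv}(b), and use that $\mathcal{A}_0,\mathcal{A}_1^*,\mathcal{A}_0^\diamond,(\mathcal{A}_1^*)^\diamond$ are isomorphisms so that letting $f,g$ range over the full dual spaces upgrades the pointwise statements to weak operator convergence. (As a minor remark, you correctly record the Schur identity $(a^{-1})_{01}(a^{-1})_{11}^{-1}=-a_{00}^{-1}a_{01}$; the paper's proof drops the minus sign at this step, which is a harmless typo since the sign is immaterial for weak operator convergence.)
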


\begin{remark}
We emphasise that due to the lack of continuity of inversion (see also Proposition \ref{prop:invdis}) and the lack of joint continuity of multiplication under the weak operator topology the second item in Theorem \ref{thm:chH} does \emph{neither} imply \emph{nor} is implied by convergence of any of the sequences $(a_{n,00})_n$, $(a_{n,01})_n$, $(a_{n,10})_n$, or $(a_{n,11})_n$ under the weak operator topology.
\end{remark}

\begin{example}\label{ex:dpbc} Let $\Omega\subseteq \mathbb{R}^3$ be a bounded, open, simply connected $C^1$-domain with connected $\mathbb{R}^3\setminus{{\Omega}}$. We shall revisit Example \ref{ex:nlc} and recall that for $b\in L(V)$ with $\Re b\geq \alpha$ and $\Re \big(b^{-1}\big)\geq 1/\beta$ we have that $a(b)=\pi_{V^\bot}+\iota_V b\iota_V^*$ belongs to both $\mathcal{M}(\alpha,\beta,(\grad,\curl))$ and $\mathcal{M}(\alpha,\beta,(\interior{\grad},\interior{\curl}))$. Written as a 3-by-3 block operator matrix according to the decomposition 
\[
    L^2(\Omega)^3 = \rge(\interior{\grad})\oplus V \oplus \rge(\interior{\curl}),
\]
$a(b)$ maybe written as
\[
     \begin{pmatrix} 1 & 0 & 0 \\ 0 & b & 0 \\ 0 & 0 & 1 \end{pmatrix}.
\]
Hence, written as a 2-by-2 block operator matrix in the way introduced at the beginning of this section, we have for $(A_0,A_1)=(\interior{\grad},\interior{\curl})$
\[
    U^*a(b)U = \begin{pmatrix} 1 & \begin{pmatrix} 0 & 0 \end{pmatrix}  \\ \begin{pmatrix} 0 \\ 0 \end{pmatrix} & \begin{pmatrix} b & 0\\ 0 & 1 \end{pmatrix} \end{pmatrix}
\]
and for $(A_0,A_1)=({\grad},{\curl})$
\[
   U^*a(b)U = \begin{pmatrix} \begin{pmatrix} 1 & 0\\ 0 & b \end{pmatrix} & \begin{pmatrix} 0 \\ 0 \end{pmatrix}  \\  \begin{pmatrix} 0 & 0 \end{pmatrix} & 1 \end{pmatrix}.
\] In both cases, we have $a(b)_{01}=0$ and $a(b)_{10}=0$. 

Let now  $(b_n)_n$ be a sequence in $L(V)$ with $\Re b_n\geq \alpha, \Re \big(b_n^{-1}\big)\geq 1/\beta$, and $b\in L(V)$ invertible. Then, by Theorem~\ref{thm:chH}, we obtain that 
\[    a(b_n) \to a(b)\text{ $H$-nonlocally w.r.t.~$(\interior{\grad},\interior{\curl})$}  \iff b_n \to b\text{ in the weak operator topology.}
\]
Again by Theorem~\ref{thm:chH}, we see that
\begin{multline*}
    a(b_n) \to a(b)\text{ $H$-nonlocally w.r.t.~$({\grad},{\curl})$} \\ \iff b_n^{-1} \to b^{-1}\text{ in the weak operator topology.}
\end{multline*}
By Example \ref{ex:Helm}(c), $V$ is infinite-dimensional. Hence, by Proposition \ref{prop:invdis}, the nonlocal $H$-convergence with respect to $(\interior{\grad},\interior{\curl})$ and with respect to $({\grad},{\curl})$ are \emph{not comparable}. In particular, the nonlocal $H$-limit \emph{depends on the attached boundary conditions}.
\end{example}

\begin{corollary}\label{cor:Hu} The nonlocal $H$-limit is unique.
\end{corollary}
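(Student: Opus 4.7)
The plan is to deduce uniqueness directly from the characterisation given in Theorem~\ref{thm:chH}. Suppose $(a_n)_n$ in $\mathcal{M}(\alpha,\beta,(A_0,A_1))$ nonlocally $H$-converges to two candidates $a$ and $\tilde a$ (both of which must be continuously invertible and lie in $\mathcal{M}(\alpha,\beta,(A_0,A_1))$ by Theorem~\ref{thm:chH}). Then by the equivalence of (i) and (ii) in Theorem~\ref{thm:chH}, the four sequences
\[
(a_{n,00}^{-1})_n,\quad (a_{n,10}a_{n,00}^{-1})_n,\quad (a_{n,00}^{-1}a_{n,01})_n,\quad (a_{n,11}-a_{n,10}a_{n,00}^{-1}a_{n,01})_n
\]
converge in the weak operator topology simultaneously to the corresponding expressions formed from the blocks of $a$ and the corresponding expressions formed from the blocks of $\tilde a$.

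Since the weak operator topology on $L(\rge(A_0))$ and on $L(\rge(A_1^*))$ and on the two mixed spaces is Hausdorff, limits of weakly operator convergent sequences are unique. Hence
\[
a_{00}^{-1}=\tilde a_{00}^{-1},\ \ a_{10}a_{00}^{-1}=\tilde a_{10}\tilde a_{00}^{-1},\ \ a_{00}^{-1}a_{01}=\tilde a_{00}^{-1}\tilde a_{01},
\]
and
\[
a_{11}-a_{10}a_{00}^{-1}a_{01}=\tilde a_{11}-\tilde a_{10}\tilde a_{00}^{-1}\tilde a_{01}.
\]
The first equality inverts to $a_{00}=\tilde a_{00}$; composing the second from the right with $a_{00}=\tilde a_{00}$ yields $a_{10}=\tilde a_{10}$; composing the third from the left with $a_{00}=\tilde a_{00}$ yields $a_{01}=\tilde a_{01}$; and substituting these into the fourth yields $a_{11}=\tilde a_{11}$. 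All four blocks of $a$ and $\tilde a$ with respect to the Helmholtz-type decomposition \eqref{eq:od} coincide, so via the representation $a=U\begin{pmatrix}a_{00}&a_{01}\\a_{10}&a_{11}\end{pmatrix}U^*$ we conclude $a=\tilde a$.

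There is no real obstacle here: the entire content has been packaged into Theorem~\ref{thm:chH}, which delivers uniqueness for free because it recasts nonlocal $H$-convergence as weak operator convergence of a family of algebraic combinations from which the blocks of the limit can be recovered invertibly. The only thing to be careful about is the order of the algebraic recovery (first $a_{00}$, then the two off-diagonal blocks, then $a_{11}$), which is straightforward.
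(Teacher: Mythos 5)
Your proof is correct and follows the same route as the paper: both deduce from Theorem~\ref{thm:chH} that the four algebraic combinations of blocks have unique weak-operator limits, and then recover $a_{00}$, $a_{10}$, $a_{01}$, $a_{11}$ in that order by inversion and composition. The only cosmetic difference is that you invoke the Hausdorff property of the weak operator topology explicitly, whereas the paper uses uniqueness of limits tacitly.
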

\begin{proof}
  Let $(a_n)_n$ nonlocally $H$-converge to invertible $a$ and $b$. By Theorem~\ref{thm:chH}, we obtain $a,b\in \mathcal{M}(\alpha,\beta,(A_0,A_1))$. Moreover, again by Theorem~\ref{thm:chH}, we deduce that $a_{00}^{-1}=b_{00}^{-1}$; thus $a_{00}=b_{00}$. Hence,
  \begin{align*}
     a_{10}a_{00}^{-1}&=\lim_{n\to\infty}a_{n,10}a_{n,00}^{-1} = b_{10}b_{00}^{-1}= b_{10}a_{00}^{-1}\text{ and }\\ a_{00}^{-1}a_{01}& =\lim_{n\to\infty}a_{n,00}^{-1}a_{n,01}= b_{00}^{-1}b_{01}=a_{00}^{-1}b_{01}.
  \end{align*}This implies
  \[
      a_{10}=b_{10}\text{ and }a_{01}=b_{01}.
  \]
  Finally,
  \begin{align*}
     a_{11} & = a_{11}-a_{10}a_{00}^{-1}a_{01} + a_{10}a_{00}^{-1}a_{01} 
     \\ &  = \lim_{n\to\infty} \big(a_{n,11}-a_{n,10}a_{n,00}^{-1}a_{n,01}\big) + a_{10}a_{00}^{-1}a_{01}
     \\ & =b_{11}-b_{10}b_{00}^{-1}b_{01} + a_{10}a_{00}^{-1}a_{01} = b_{11}.\qedhere
  \end{align*}
\end{proof}

By Theorem \ref{thm:chH} and the continuity of computing the adjoint in the weak operator topology as well as the fact that computing the inverse and computing the adjoint are commutative operations, we obtain that computing the adjoint is continuous under nonlocal $H$-convergence. We will provide more details of this line of reasoning in the following. First of all, we shall observe that the set $\mathcal{M}(\alpha,\beta,(A_0,A_1))$ is invariant under computing the adjoint.

\begin{proposition}\label{prop:adj} Let $a\in L(H_1)$, $\alpha,\beta>0$. Then
\[
    a\in \mathcal{M}(\alpha,\beta,(A_0,A_1)) \iff a^* \in \mathcal{M}(\alpha,\beta,(A_0,A_1)).
\]
\end{proposition}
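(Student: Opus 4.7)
The proof will be essentially a symmetry check: we will verify that each of the five conditions defining $\mathcal{M}(\alpha,\beta,(A_0,A_1))$ is invariant under the map $a\mapsto a^*$, using three elementary identities. Since $(a^*)^*=a$, it suffices to prove one implication.

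The first key observation is the commutation of compression with taking adjoints:
\[
  (a^*)_{00} = \iota_{\textnormal{r},A_0}^* a^* \iota_{\textnormal{r},A_0} = (\iota_{\textnormal{r},A_0}^* a \iota_{\textnormal{r},A_0})^* = (a_{00})^*,
\]
and analogously $(a^*)_{11} = (a_{11})^*$. The second observation is that $a$ is continuously invertible if and only if $a^*$ is, in which case $(a^*)^{-1} = (a^{-1})^*$. The third observation is the trivial identity $\Re(b^*) = (b^*+b)/2 = \Re(b)$ for any $b \in L(H_1)$.

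From these, the plan is to chain equivalences for each defining condition. For the first: $\Re a_{00} \geq \alpha 1_{A_0}$ is equivalent to $\Re (a_{00})^* \geq \alpha 1_{A_0}$, i.e., to $\Re (a^*)_{00} \geq \alpha 1_{A_0}$. For the second: using that $((a_{00})^*)^{-1} = ((a_{00})^{-1})^*$, $\Re a_{00}^{-1} \geq (1/\beta)1_{A_0}$ is equivalent to $\Re ((a^*)_{00})^{-1} \geq (1/\beta)1_{A_0}$. Continuous invertibility of $a$ is equivalent to that of $a^*$. For the conditions involving $a^{-1}$, we use $(a^{-1})^* = (a^*)^{-1}$, so $((a^{-1})_{11})^* = ((a^*)^{-1})_{11}$; then $\Re (a^{-1})_{11} \geq (1/\beta)1_{A_1^*}$ and $\Re (a^{-1})^{-1}_{11} \geq \alpha 1_{A_1^*}$ translate verbatim to the corresponding statements for $a^*$.

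There is no real obstacle here; the only point requiring attention is making sure that compressing onto $\rge(A_0)$ or $\rge(A_1^*)$ commutes with adjoints. This is immediate because $\iota_{\textnormal{r},A_0}$ and $\iota_{\textnormal{r},A_1^*}$ are bounded embeddings into $H_1$, so $(\iota^* a \iota)^* = \iota^* a^* \iota$ by basic operator theory. Given the brevity of the argument, I would write the proof as a single line noting that the defining conditions are manifestly symmetric under $a\leftrightarrow a^*$ in view of the three identities above.
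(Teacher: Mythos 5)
Your proof is correct and follows essentially the same route as the paper: both hinge on the commutation of adjoint with compression, $(\iota^* a \iota)^* = \iota^* a^* \iota$, together with $(a^*)^{-1} = (a^{-1})^*$ and $\Re b^* = \Re b$. The paper phrases this via the unitary $U$ and the full block-matrix $U^*a^*U = (U^*aU)^*$, while you work directly with the individual compressions, but this is only a cosmetic difference in bookkeeping.
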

\begin{proof} By $a^{**}=a$, it suffices to just prove one implication. Assume that $a\in \mathcal{M}(\alpha,\beta,(A_0,A_1))$. Since $a$ is continuously invertible, so is $a^*$. We have $U^*aU=\begin{pmatrix} a_{00} & a_{01} \\ a_{10} & a_{11} \end{pmatrix}$ with this we observe 
\[
   U^*a^*U = (U^*a U)^* = \begin{pmatrix} a_{00}^* & a_{10}^* \\ a_{01}^*  & a_{11}^* \end{pmatrix}
\]
and, using $U^*=U^{-1}$ as $U$ is unitary, we obtain
\[
   U^*(a^*)^{-1}U = (U^*a^{-1} U)^* = \begin{pmatrix} (a^{-1})_{00}^* & (a^{-1})_{10}^*   \\ (a^{-1})_{01}^* & (a^{-1})_{11}^* \end{pmatrix}.
\]
Thus, 
\begin{align*}
&   \Re (a^*)_{00}=\Re a_{00}^*=\Re a_{00}\geq \alpha, \\ & \Re \big((a^*)_{00}\big)^{-1}=\Re (a_{00}^*)^{-1}= \Re ((a_{00})^{-1})^*=\Re (a_{00})^{-1}\geq 1/\beta,
\end{align*}
and, similarly,
\begin{align*}
 & \Re \big((a^*)^{-1}\big)_{11}=\Re \big(a^{-1}\big)^*_{11}\geq 1/\beta, \\ & \Re \big((a^*)^{-1}\big)_{11}^{-1} = \Re \big(\big(a^{-1}\big)^*_{11}\big)^{-1}\geq \alpha,
\end{align*}
completing the proof.
\end{proof}

\begin{corollary}\label{cor:chH1} Let $\alpha,\beta>0$, $a \in L(H_1)$,  $(a_n)_n$ in $\mathcal{M}(\alpha,\beta,(A_0,A_1))$. Then the following conditions are equivalent:
\begin{enumerate}
 \item[(i)] $a$ is continuously invertible and $(a_n)_n$ nonlocally $H$-converges to $a$;
 \item[(ii)] $a^*$ is continuously invertible and $(a_n^*)_n$ nonlocally $H$-converges to $a^*$;
\end{enumerate}
\end{corollary}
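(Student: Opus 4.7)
The plan is to reduce the equivalence to Theorem~\ref{thm:chH} combined with two elementary facts: first, that $\mathcal{M}(\alpha,\beta,(A_0,A_1))$ is stable under taking adjoints (Proposition~\ref{prop:adj}), and second, that taking adjoints is continuous in the weak operator topology and commutes with inversion on continuously invertible operators.

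First I would note that by Proposition~\ref{prop:adj}, the sequence $(a_n^*)_n$ lies in $\mathcal{M}(\alpha,\beta,(A_0,A_1))$ precisely when $(a_n)_n$ does, and $a^*$ is continuously invertible exactly when $a$ is. So the hypotheses of Theorem~\ref{thm:chH} are symmetric under the adjoint operation.

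Next, I would translate condition (ii) of Theorem~\ref{thm:chH} for $a_n^*$ back into conditions for $a_n$. Using the block structure $U^*a^*U = (U^*aU)^*$ identified in the proof of Proposition~\ref{prop:adj}, we have $(a^*)_{00}=a_{00}^*$, $(a^*)_{01}=a_{10}^*$, $(a^*)_{10}=a_{01}^*$ and $(a^*)_{11}=a_{11}^*$, and the identity $(c^*)^{-1}=(c^{-1})^*$ gives, for each $n$,
\[
(a_{n,00}^*)^{-1}=(a_{n,00}^{-1})^*,\quad a_{n,01}^*(a_{n,00}^*)^{-1}=(a_{n,00}^{-1}a_{n,01})^*,
\]
\[
(a_{n,00}^*)^{-1}a_{n,10}^*=(a_{n,10}a_{n,00}^{-1})^*,\quad a_{n,11}^*-a_{n,01}^*(a_{n,00}^*)^{-1}a_{n,10}^*=(a_{n,11}-a_{n,10}a_{n,00}^{-1}a_{n,01})^*,
\]
and analogous identities for the limit $a$. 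Since $c\mapsto c^*$ is continuous on $L(H)$ in the weak operator topology, weak operator convergence of the four sequences in (ii) of Theorem~\ref{thm:chH} applied to $(a_n)_n$ is equivalent to weak operator convergence of the four adjoint sequences appearing in (ii) of Theorem~\ref{thm:chH} applied to $(a_n^*)_n$.

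Combining these two observations, the characterisation in Theorem~\ref{thm:chH} applied once to $(a_n)_n$ and once to $(a_n^*)_n$ yields the equivalence of (i) and (ii) in the corollary. There is no real obstacle here: the entire argument is a symmetry check that uses only Theorem~\ref{thm:chH}, Proposition~\ref{prop:adj}, the continuity of $c\mapsto c^*$ in the weak operator topology, and the commutation of adjoint with inverse. The only point requiring minor care is keeping track of which block goes to which under adjoining (off-diagonal blocks swap), but this has already been recorded in the proof of Proposition~\ref{prop:adj}.
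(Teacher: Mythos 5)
Your proposal is correct and follows essentially the same route as the paper's proof: both invoke Proposition~\ref{prop:adj} for stability of $\mathcal{M}(\alpha,\beta,(A_0,A_1))$ under adjoints, then translate condition~(ii) of Theorem~\ref{thm:chH} for $(a_n^*)_n$ into the adjoints of the four sequences for $(a_n)_n$ (with the off-diagonal blocks swapping as you note), and finish with continuity of $c\mapsto c^*$ in the weak operator topology. The only cosmetic difference is that the paper proves a single implication and lets $a^{**}=a$ give the converse, whereas you phrase the argument symmetrically from the start.
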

\begin{proof}
   It suffices to prove `(i)$\Rightarrow$(ii)'. First of all note that by Proposition \ref{prop:adj}, $(b_n)_n\coloneqq (a_n^*)_n$ is a sequence in $\mathcal{M}(\alpha,\beta,(A_0,A_1))$. Moreover, denote $b\coloneqq a^*$.  By Theorem~\ref{thm:chH}, it suffices to show that the sequences
   \[
     (b_{n,00}^{-1})_n, \big(b_{n,10}b_{n,00}^{-1}\big)_n, \big(b_{n,00}^{-1}b_{n,01}\big)_n, \text{ and }\big(b_{n,11}-b_{n,10}b_{n,00}^{-1}b_{n,01}\big)_n
     \] converge in the respective weak operator topologies to $b_{00}^{-1}$, $b_{10}b_{00}^{-1}$, $b_{00}^{-1}b_{01}$, and $b_{11}-b_{10}b_{00}^{-1}b_{01}$.
  This, in turn, is implied by the convergence of the respective adjoints in the weak operator topology. Using $U^*b_nU=(U^*a_nU)^*$ similarly to the proof of Proposition \ref{prop:adj} we have for all $n\in \N$
  \begin{align*}
    (b_{n,00}^{-1})^* & = a_{n,00}^{-1} \\
    \big( b_{n,10}b_{n,00}^{-1}\big)^* & = \big(b_{n,00}^{-1}\big)^*\big( b_{n,10}\big)^* = a_{n,00}^{-1}a_{n,01} \\
    \big(b_{n,00}^{-1}b_{n,01}\big)^* & =a_{n,10} a_{n,00}^{-1}\\
    \big(b_{n,11}-b_{n,10}b_{n,00}^{-1}b_{n,01}\big)^* & =     b_{n,11}^*-b_{n,01}^*(b_{n,00}^*)^{-1}b_{n,10}^*\big)^* = a_{n,11}-a_{n,10}(a_{n,00})^{-1}a_{n,01}
  \end{align*}
  and similarly for $b$ and $a$ replacing $b_n$ and $a_n$. Since $(a_n)_n$ nonlocally $H$-converges to $a$, by Theorem \ref{thm:chH}, we thus obtain that $(a_n^*)_n$ nonlocally $H$-converges to $a^*$.
\end{proof}

The latter result implies the self-adjointness of the nonlocal $H$-limit given the self-adjointness of the sequence converging to it.

\begin{corollary}\label{cor:asa} Let $\alpha,\beta>0$, $a \in L(H_1)$ continuously invertible,  $(a_n)_n$ in $\mathcal{M}(\alpha,\beta,(A_0,A_1))$. Assume that $(a_n)_n$ nonlocally $H$-converges to $a$. If $a_n=a_n^*$ for all $n\in\mathbb{N}$, then $a=a^*$.
\end{corollary}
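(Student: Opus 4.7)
The plan is to deduce this corollary directly from Corollary~\ref{cor:chH1} (continuity of the adjoint under nonlocal $H$-convergence) together with Corollary~\ref{cor:Hu} (uniqueness of the nonlocal $H$-limit). No new estimates or block-matrix computations should be needed; the result is a clean structural consequence of what has already been proved.

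Concretely, I would argue as follows. By hypothesis $(a_n)_n$ nonlocally $H$-converges to the continuously invertible operator $a$, and each $a_n$ is self-adjoint, so $(a_n^*)_n=(a_n)_n$. On the one hand this sequence nonlocally $H$-converges to $a$. On the other hand, Corollary~\ref{cor:chH1} applied to $(a_n)_n$ yields that $(a_n^*)_n$ nonlocally $H$-converges to $a^*$, and also gives that $a^*$ is continuously invertible (so that $a^*$ is a legitimate candidate for the nonlocal $H$-limit). Uniqueness of the nonlocal $H$-limit (Corollary~\ref{cor:Hu}) then forces $a=a^*$.

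The only thing worth double-checking is that the invocation of Corollary~\ref{cor:Hu} is legitimate, i.e.\ that both limits $a$ and $a^*$ satisfy the hypotheses needed for uniqueness (continuous invertibility and the nonlocal $H$-convergence of $(a_n)_n$ to each of them). Continuous invertibility of $a$ is assumed, and continuous invertibility of $a^*$ follows automatically from that of $a$; the two nonlocal $H$-convergence statements are exactly the two outputs of the argument above. There is no real obstacle here: the whole proof is essentially one line once Corollaries~\ref{cor:chH1} and~\ref{cor:Hu} are in place, and it can safely be stated as such.
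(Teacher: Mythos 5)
Your proof is correct and matches the paper's intended argument exactly: the paper states, immediately before Corollary~\ref{cor:asa}, that "the latter result" (Corollary~\ref{cor:chH1}) "implies the self-adjointness of the nonlocal $H$-limit," and your combination of Corollary~\ref{cor:chH1} with the uniqueness Corollary~\ref{cor:Hu} is precisely that one-line deduction. Your double-check that $a^*$ is continuously invertible (so that Corollary~\ref{cor:Hu} applies to both candidate limits) is the right thing to verify and is settled by Corollary~\ref{cor:chH1} itself, as you note.
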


The proof of Theorem \ref{thm:chH} needs some preparations. 

\begin{lemma}\label{lem:boinv} Let $a\in L(H_1)$ with $a_{00}$ continuously invertible. Then
\begin{enumerate}
\item[(a)]
\[ \begin{pmatrix} a_{00} & a_{01}\\ a_{10} & a_{11}
\end{pmatrix} = \begin{pmatrix} 1 & 0\\a_{10}a_{00}^{-1} & 1
\end{pmatrix}\begin{pmatrix} a_{00} & 0 \\ 0 & a_{11}-a_{10}a_{00}^{-1}a_{01}
\end{pmatrix}\begin{pmatrix} 1 &  a_{00}^{-1} a_{01}\\0 & 1
\end{pmatrix}
\] 
\item[(b)] If, in addition, $a$ is continuously invertible, then $a_{11}-a_{10}a_{00}^{-1}a_{01}$ is and
\begin{align*}
&   \begin{pmatrix}
     \big(a^{-1}\big)_{00} & \big(a^{-1}\big)_{01}\\
     \big(a^{-1}\big)_{10} & \big(a^{-1}\big)_{11}
   \end{pmatrix}\\ & =\begin{pmatrix} 1 & -a_{00}^{-1} a_{01}\\ 0  & 1
\end{pmatrix} \begin{pmatrix} a_{00}^{-1} & 0 \\ 0 & \big(a_{11}-a_{10}a_{00}^{-1}a_{01}\big)^{-1}
\end{pmatrix}\begin{pmatrix} 1 & 0\\ -a_{10}a_{00}^{-1} & 1
\end{pmatrix}\\
& =  \begin{pmatrix}  a_{00}^{-1} + a_{00}^{-1} a_{01}\big(a_{11}-a_{10}a_{00}^{-1}a_{01}\big)^{-1}a_{10}a_{00}^{-1}& -a_{00}^{-1} a_{01}\big(a_{11}-a_{10}a_{00}^{-1}a_{01}\big)^{-1}  \\ -\big(a_{11}-a_{10}a_{00}^{-1}a_{01}\big)^{-1}a_{10}a_{00}^{-1} & \big(a_{11}-a_{10}a_{00}^{-1}a_{01}\big)^{-1}
\end{pmatrix}
\end{align*}In particular, we have $\big(a^{-1}\big)_{11}^{-1}=\big(a_{11}-a_{10}a_{00}^{-1}a_{01}\big)$ and $\big(a^{-1}\big)_{01}=   -a_{00}^{-1} a_{01} \big(a^{-1}\big)_{11}$.
\end{enumerate}
\end{lemma}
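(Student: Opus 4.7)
The plan is to prove both parts by direct block matrix computation, using the factorisation in (a) as the key structural tool for (b).

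For (a), I would simply multiply out the right-hand side. Moving from right to left, the product of the first two factors is
\[
\begin{pmatrix} a_{00} & 0 \\ 0 & a_{11}-a_{10}a_{00}^{-1}a_{01}\end{pmatrix}\begin{pmatrix} 1 & a_{00}^{-1}a_{01}\\ 0 & 1\end{pmatrix} = \begin{pmatrix} a_{00} & a_{01}\\ 0 & a_{11}-a_{10}a_{00}^{-1}a_{01}\end{pmatrix},
\]
and left-multiplying by the remaining unit lower-triangular factor turns the zero in the bottom-left into $a_{10}a_{00}^{-1}\cdot a_{00}=a_{10}$ and the bottom-right into $a_{10}a_{00}^{-1}a_{01}+a_{11}-a_{10}a_{00}^{-1}a_{01}=a_{11}$, as desired. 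Nothing here uses invertibility of $a$.

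For (b), I would exploit the fact that the two unit triangular factors in (a) are always continuously invertible with explicit inverses obtained by negating the off-diagonal entry, namely $\begin{pmatrix} 1 & 0\\ a_{10}a_{00}^{-1} & 1\end{pmatrix}^{-1}=\begin{pmatrix} 1 & 0\\ -a_{10}a_{00}^{-1} & 1\end{pmatrix}$ and analogously for the upper one; both identities are verified by one line of multiplication. Consequently, from (a), the operator $U^*aU$ is continuously invertible if and only if the block diagonal middle factor is. Given that $a$ and $a_{00}$ are both continuously invertible by hypothesis, this forces $a_{11}-a_{10}a_{00}^{-1}a_{01}$ to be continuously invertible as well (the block-diagonal middle factor is invertible precisely when both diagonal blocks are). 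Inverting the factorisation in reverse order then gives
\[
U^*a^{-1}U = \begin{pmatrix} 1 & -a_{00}^{-1}a_{01}\\ 0 & 1\end{pmatrix}\begin{pmatrix} a_{00}^{-1} & 0\\ 0 & (a_{11}-a_{10}a_{00}^{-1}a_{01})^{-1}\end{pmatrix}\begin{pmatrix} 1 & 0\\ -a_{10}a_{00}^{-1} & 1\end{pmatrix},
\]
and multiplying out yields the claimed formula. The two final identities $(a^{-1})_{11}^{-1}=a_{11}-a_{10}a_{00}^{-1}a_{01}$ and $(a^{-1})_{01}=-a_{00}^{-1}a_{01}(a^{-1})_{11}$ are then read off from the $(1,1)$, $(2,2)$, and $(1,2)$ blocks of the result.

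There is essentially no obstacle: both parts are bookkeeping. The only thing worth flagging is that one should be careful to verify that the two unit triangular factors really do have the claimed inverses as bounded operators on $\rge(A_0)\oplus\rge(A_1^*)$, which reduces to the observation that $a_{10}a_{00}^{-1}\in L(\rge(A_0),\rge(A_1^*))$ and $a_{00}^{-1}a_{01}\in L(\rge(A_1^*),\rge(A_0))$ are bounded whenever $a_{00}^{-1}$ exists as a bounded operator; this is immediate from the hypothesis.
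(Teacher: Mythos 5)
Your proposal is correct and takes exactly the approach the paper indicates: verify (a) by direct multiplication, then obtain (b) by inverting the three factors (the unit triangular ones being obviously invertible) and reading off the blocks. The paper's own proof is the same argument stated tersely, so there is nothing to add.
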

\begin{proof}
 The first assertion follows from a direct computation. The statement in (b) is in turn a straightforward consequence of the formula in (a).
\end{proof}

\begin{proof}[Proof of Theorem \ref{thm:chH}] For the proof, we refer to the solution formula for elliptic type problems in Theorem \ref{thm:varwp2}. So, let $n\in\mathbb{N}$ and let $f$ and $g$ be as in the definition of nonlocal $H$-convergence and let $u_n$ and $v_n$ be the corresponding solutions. Then we have 
\begin{align}
\label{eq:chH1}   u_n & = \mathcal{A}_0^{-1} a_{n,00}^{-1}(\mathcal{A}_0^{\diamond})^{-1}f\\
 \notag  v_n & = (\mathcal{A}_1^*)^{-1} (a_n^{-1})_{11}^{-1}((\mathcal{A}_1^*)^{\diamond})^{-1}g\\
  \label{eq:chH2}    & = (\mathcal{A}_1^*)^{-1} \big(a_{n,11}-a_{n,10}a_{n,00}^{-1}a_{n,01}\big)((\mathcal{A}_1^*)^{\diamond})^{-1}g
\end{align}
where the last equation follows from Lemma \ref{lem:boinv}(b).
With this, we infer
\begin{align*}
  a_n A_0 u_n & =   a_n UU^*A_0 u_n
  \\ & = U \begin{pmatrix} a_{n,00}& a_{n,01}
  \\ a_{n,10} & a_{n,11} 
  \end{pmatrix}  \begin{pmatrix} \mathcal{A}_0 u_n  
  \\  0
  \end{pmatrix}  \\ &   = U \begin{pmatrix} a_{n,00}& a_{n,01}
  \\ a_{n,10} & a_{n,11} 
  \end{pmatrix}  \begin{pmatrix} a_{n,00}^{-1}(\mathcal{A}_0^{\diamond})^{-1}f 
  \\ 0
  \end{pmatrix}    = U \begin{pmatrix} (\mathcal{A}_0^{\diamond})^{-1}f \\ a_{n,10} a_{n,00}^{-1}(\mathcal{A}_0^{\diamond})^{-1}f  
  \end{pmatrix}
\end{align*}
and so
\begin{equation}\label{eq:chH3}
 a_n A_0 u_n =U \begin{pmatrix} (\mathcal{A}_0^{\diamond})^{-1}f
  \\a_{n,10} a_{n,00}^{-1}(\mathcal{A}_0^{\diamond})^{-1}f\end{pmatrix}.
\end{equation}
Similarly, we compute
\begin{align*}
   a_n^{-1} A_1^* v_n & =   a_n^{-1} UU^*A_1^* v_n \\
   & =   U\begin{pmatrix} (a_{n}^{-1})_{00}& (a_n^{-1})_{01}
  \\ (a_n^{-1})_{10} & (a_n^{-1})_{11}
  \end{pmatrix} \begin{pmatrix}  0 \\ (a_n^{-1})_{11}^{-1}((\mathcal{A}_1^*)^{\diamond})^{-1}g\end{pmatrix}
  \\ & = U \begin{pmatrix}  (a_n^{-1})_{01}(a_n^{-1})_{11}^{-1}((\mathcal{A}_1^*)^{\diamond})^{-1}g \\ ((\mathcal{A}_1^*)^{\diamond})^{-1}g\end{pmatrix}.
\end{align*}
Next, from Lemma \ref{lem:boinv}(b), we deduce that
\[
  (a_n^{-1})_{01}(a_n^{-1})_{11}^{-1} = a_{n,00}^{-1} a_{n,01}.
\]
Thus,
\begin{equation}\label{eq:chH4}
  a_n^{-1} A_1^* v_n = U \begin{pmatrix}  a_{n,00}^{-1} a_{n,01}((\mathcal{A}_1^*)^{\diamond})^{-1}g \\((\mathcal{A}_1^*)^{\diamond})^{-1}g \end{pmatrix}.
\end{equation}
Next, we observe that $\mathcal{A}_0$, $\mathcal{A}_1^*$, $\mathcal{A}_0^{\diamond}$, $(\mathcal{A}_1^*)^\diamond$ are all isomorphisms by Proposition \ref{prop:varwp}. Hence, the left-hand sides of \eqref{eq:chH1}, \eqref{eq:chH2}, \eqref{eq:chH3}, and \eqref{eq:chH4} converge weakly in $\dom(\mathcal{A}_0)$, $\dom(\mathcal{A}_1^*)$, $H_1$, and $H_1$ for all admissible $f$ and $g$ to the corresponding expression with $a_n$ replaced by $a$, if and only if $a_{n,00}$, $ \big(a_{n,11}-a_{n,10}a_{n,00}^{-1}a_{n,01}\big)$, $a_{n,10} a_{n,00}^{-1}$, and $ a_{n,00}^{-1} a_{n,01}$ converge in the respective weak operator topologies to the corresponding expression without the additional index $n$.
\end{proof}

\begin{remark}
 We shall note here that the restriction to sequences $(a_n)_n$ is \emph{not} necessary. In fact, the corresponding notion of nonlocal $H$-convergence for nets $(a_\iota)_{\iota\in I}$ ($I$ some directed set), is equivalent to the convergence of the corresponding operator nets in (ii) of Theorem \ref{thm:chH}. We will exploit this fact in Section \ref{sec:metco}.
\end{remark}

A closer inspection of the proof of Theorem \ref{thm:chH} reveals the following more detailed version.
\begin{theorem}\label{thm:chHdet} Let $(a_n)_n$ in $\mathcal{M}(\alpha,\beta,(A_0,A_1))$, $a \in L(H_1)$.

Consider the following statements:

\begin{enumerate}
\item[(a)] For all $f\in \dom(\mathcal{A}_0)^*$ let $(u_n)_n$ in $\dom(\mathcal{A}_0)$ be such that
\[
    \langle a_n A_0 u_n, A_0 \phi\rangle =f(\phi)  \quad(n\in\mathbb{N}, \phi\in \dom(A_0)).
\]
Then $u_n\rightharpoonup u \in \dom(A_0)$ and 
\[
     \langle a A_0 u, A_0 \phi\rangle = f(\phi) \quad(\phi\in \dom(\mathcal{A}_0)).
\]
\item[(b)] As (a) with the additional conclusion that $a_n A_0 u_n\rightharpoonup  aA_0 u\in H_1$.
\item[(c)] Let $a$ be invertible. For all $g\in \dom(\mathcal{A}_1^*)^*$ let $(v_n)_n$ in $\dom(\mathcal{A}_1^*)$ be such that
\[
    \langle a_n^{-1} A_1^*v_n, A_1^*\psi\rangle = g(\psi) \quad(n\in\mathbb{N}, \psi\in \dom(\mathcal{A}_1^*)).
\]
Then $v_n\rightharpoonup v \in \dom(A_1^*)$ and 
\[
     \langle a^{-1} A_1^* v, A_1^* \psi\rangle =g(\psi) \quad(\psi\in \dom(\mathcal{A}_1^*)).
\]
\item[(d)] As (c) with the additional conclusion that $a_n^{-1} A_0 v_n\rightharpoonup  a^{-1}A_0 v\in H_1$.
\item[(a')] $\Re a_{00}\geq \alpha$, and $a_{n,00}^{-1}\to a_{00}^{-1}$ in the weak operator topology.
\item[(b')] As in (a') and $a_{n,10}a_{n,00}^{-1}\to a_{10}a_{00}^{-1}$ in the weak operator topology.
\item[(c')] $a_{00}$ is continuously invertible, $\Re \big( \big( a_{11}-a_{10}a_{00}^{-1}a_{01}\big)^{-1}\big)\geq 1/\beta$, and
\[
   a_{n,11}-a_{n,10}a_{n,00}^{-1}a_{n,01}\to    a_{11}-a_{10}a_{00}^{-1}a_{01}
\] 
in the weak operator topology.
\item[(d')] As in (c') with the additional conclusion that $a_{00,n}^{-1}a_{01,n}\to a_{00}^{-1}a_{01}$ in the weak operator topology.
\end{enumerate}

Then (a)$\Leftrightarrow$(a'), (b)$\Leftrightarrow$(b'), (c)$\Leftrightarrow$(c'), and (d)$\Leftrightarrow$(d'). 
\end{theorem}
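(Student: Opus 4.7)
The proof will be essentially a refinement of the computation already carried out in the proof of Theorem~\ref{thm:chH}. The plan is to exploit the four explicit solution formulas \eqref{eq:chH1}--\eqref{eq:chH4} there, each of which isolates exactly one of the block-operator expressions appearing in the primed conditions, combined with the fact (Proposition~\ref{prop:varwp}) that $\mathcal{A}_0$, $\mathcal{A}_0^\diamond$, $\mathcal{A}_1^*$ and $(\mathcal{A}_1^*)^\diamond$ are topological isomorphisms and hence transport weak convergence bijectively.

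For (a)$\Leftrightarrow$(a') I would read the formula $u_n=\mathcal{A}_0^{-1}a_{n,00}^{-1}(\mathcal{A}_0^\diamond)^{-1}f$ as follows: as $f$ varies over $\dom(\mathcal{A}_0)^*$, the argument $(\mathcal{A}_0^\diamond)^{-1}f$ ranges over all of $\rge(A_0)$, so weak convergence of $(u_n)_n$ in $\dom(A_0)$ for every $f$ is the same as weak convergence of $(a_{n,00}^{-1}h)_n$ for every $h\in\rge(A_0)$. The uniform bound $\|a_{n,00}^{-1}\|\le 1/\alpha$ from Lemma~\ref{lem:conpd}(a) promotes this to WOT convergence to some $b\in L(\rge(A_0))$. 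The additional demand that $u$ satisfy the variational equation with $a$ identifies $b$ with $a_{00}^{-1}$ (in particular extracting invertibility of $a_{00}$), while Lemma~\ref{lem:conpd}(d) transfers the uniform coercivities $\Re a_{n,00}\ge\alpha$, $\Re a_{n,00}^{-1}\ge 1/\beta$ to $\Re a_{00}\ge\alpha$. The converse just substitutes the WOT convergence back into the solution formula.

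The pair (b)$\Leftrightarrow$(b') drops out of formula \eqref{eq:chH3}: the first component of $a_nA_0u_n$ is independent of $n$, and the weak convergence demanded in (b) beyond (a) amounts to weak convergence of $(a_{n,10}a_{n,00}^{-1}(\mathcal{A}_0^\diamond)^{-1}f)_n$ for all $f$, which is exactly WOT convergence of $(a_{n,10}a_{n,00}^{-1})_n$. The pair (c)$\Leftrightarrow$(c') is fully analogous to (a)$\Leftrightarrow$(a'), now run on formula \eqref{eq:chH2}, where the Schur-complement identity $(a_n^{-1})_{11}^{-1}=a_{n,11}-a_{n,10}a_{n,00}^{-1}a_{n,01}$ from Lemma~\ref{lem:boinv}(b) brings the block expression into the form stated in (c'); the hypothesis that $a_{00}$ be continuously invertible must accompany (c') simply so that the Schur complement is well defined. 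Finally (d)$\Leftrightarrow$(d') mirrors (b)$\Leftrightarrow$(b') via formula \eqref{eq:chH4} together with the identity $(a_n^{-1})_{01}(a_n^{-1})_{11}^{-1}=a_{n,00}^{-1}a_{n,01}$ from the same lemma.

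The main obstacle I anticipate is bookkeeping. One must verify that weak convergence in the subspace $\dom(\mathcal{A}_0)=\dom(A_0)\cap\rge(A_0^*)$ agrees with weak convergence in $\dom(A_0)$, which holds because the subspace is closed in the graph norm; and one must be careful to extract invertibility of $a_{00}$ (in (a')) and of the Schur complement (in (c')) in the limit from WOT convergence of the inverses rather than postulating it at the outset, which is precisely what Lemma~\ref{lem:conpd}(c)--(d) and Lemma~\ref{lem:boinv}(a) deliver. Once these technicalities are dispatched, each of the four equivalences follows by directly reading off the corresponding solution formula.
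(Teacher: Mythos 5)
Your proposal is correct and follows essentially the same route as the paper: the paper's (terse) proof explicitly refers to the reformulations \eqref{eq:chH1}--\eqref{eq:chH4} from the proof of Theorem~\ref{thm:chH} and to Lemma~\ref{lem:conpd} for the invertibility/coercivity statements, which is precisely what you spell out. The extra care you take over transporting weak convergence through the isomorphisms $\mathcal{A}_0$, $\mathcal{A}_0^\diamond$, $\mathcal{A}_1^*$, $(\mathcal{A}_1^*)^\diamond$ and over extracting invertibility of $a_{00}$ and the Schur complement in the limit is exactly what the paper leaves implicit.
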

\begin{proof}
 Most of the things are immediate from the reformulations \eqref{eq:chH1}, \eqref{eq:chH2}, \eqref{eq:chH3}, and \eqref{eq:chH4}. The invertibility statements follow from Lemma \ref{lem:conpd}.
\end{proof}

\begin{remark}\label{rem:GH} Let $\Omega\subseteq \mathbb{R}^3$ be such that $(A_0,A_1)=(\interior{\grad},\interior{\curl})$ is compact and exact; see again Examples \ref{ex:diffo}, \ref{ex:dfnf}. Let $(a_n)_n$ in $M(\alpha,\beta,\Omega)$ and $a\in M(\alpha,\beta,\Omega)$.

\begin{enumerate}
 \item[(a)] If $a_n=a_n^*$, $a=a^*$, and let the statement (a) of Theorem~\ref{thm:chHdet} be satisfied. This is equivalent to $(a_n)_n$ $G$-converging to $a$ (as defined in \cite[Definition 6.1]{Tartar2009}). Thus, we obtain the characterisation of $G$-convergence given in (a') and recover the main result in \cite{W16_Gcon}.
 \item[(b)] Condition (b) in Theorem~\ref{thm:chHdet}  is equivalent to $(a_n)_n$ $H$-converging to $a$ (as defined in \cite[Definition 6.4]{Tartar2009}). Hence, Theorem~\ref{thm:chHdet}(b') is an operator-theoretic description of $H$-convergence. Note that, if in addition $a_n=a_n^*$ and $a=a^*$ and assuming Theorem~\ref{thm:chHdet}(b), we also obtain 
 \[
  a_{n,00}^{-1}a_{n,01} =    \left(a_{n,10}a_{n,00}^{-1}\right)^* \to \left(a_{10}a_{00}^{-1}\right)^*  =  a_{00}^{-1} a_{01}.
 \]
 \item[(c)] Even though assuming both self-adjointness and local $H$-convergence, a suitable characterisation of the convergence of $a_{n,11}-a_{n,10}a_{n,00}^{-1}a_{n,01}$ does not follow from the reformulations outlined in the proof of Theorem \ref{thm:chH}. However, it is possible to show that $a_{n,11}-a_{n,10}a_{n,00}^{-1}a_{n,01}$ does converge to the expected limit. In Theorem~\ref{thm:lHnlH} we shall see that local $H$-convergence and nonlocal $H$-convergence are the same concepts for multiplication operators and will, thus, show the remaining convergence result even for non-selfadjoint sequences.
\end{enumerate}
\end{remark}

\section{Metrisability and compactness}\label{sec:metco}

In this section, we shall attach a topology to nonlocal $H$-convergence and show that bounded subsets of  $\mathcal{M}(\alpha,\beta,(A_0,A_1))$ are precisely the relatively compact ones under this topology. Furthermore, if $H_1$ is separable, we will show that bounded subsets are metrisable, so that the nonlocal $H$-closure of bounded subsets of $\mathcal{M}(\alpha,\beta,(A_0,A_1))$ are both compact and sequentially compact.  Again let $(A_0,A_1)$ be exact and closed. 

We recall a well-known result for the weak operator topology. Since this result is the basis for our metrisability and compactness statement for nonlocal $H$-convergence, we sketch the short proof.
\begin{theorem}\label{thm:comp} Let $H_0,H_1$ be Hilbert spaces. Then 
\[
   B_{L(H_0,H_1)}\coloneqq \{ T\in L(H_0,H_1); \|T\|\leq 1\}
\]
is compact under the weak operator topology of $L(H_0,H_1)$. If, in addition, both $H_0$ and $H_1$ are separable, then the weak operator topology of $L(H_0,H_1)$ on $B_{L(H_0,H_1)}$
is metrisable.
\end{theorem}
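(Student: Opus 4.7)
The plan is to view $L(H_0,H_1)$ endowed with the weak operator topology as a subspace of the product space $\prod_{x\in H_0}(H_1,\sigma(H_1,H_1))$, where each factor carries the weak topology. Under this identification, a net $(T_\iota)$ of operators converges in the weak operator topology to $T$ precisely when, for every $x\in H_0$, $T_\iota x \rightharpoonup Tx$ in $H_1$. I would then restrict to operators in $B_{L(H_0,H_1)}$, which maps into $\prod_{x\in H_0}\overline{B}_{H_1}(0,\|x\|)$, since $\|Tx\|\leq \|x\|$.

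For the compactness statement, I would invoke two classical ingredients. First, each closed ball $\overline{B}_{H_1}(0,\|x\|)$ is weakly compact by reflexivity of $H_1$ (this is a direct consequence of the Banach--Alaoglu theorem together with the Hilbert space self-duality). Second, by Tychonoff's theorem the product $\prod_{x\in H_0}\overline{B}_{H_1}(0,\|x\|)$ is compact in the product topology. It then remains to show that $B_{L(H_0,H_1)}$ is closed in this product: if $T_\iota\to T$ pointwise in the weak topology of $H_1$, then for $x,y\in H_0$ and $\lambda,\mu\in\mathbb{K}$, weak limits preserve linear relations so $T(\lambda x+\mu y)=\lambda Tx+\mu Ty$, and weak lower semicontinuity of the norm yields $\|Tx\|\leq \liminf_\iota \|T_\iota x\|\leq \|x\|$. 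Hence $T\in B_{L(H_0,H_1)}$, which gives compactness as a closed subset of a compact space.

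For the metrisability statement under separability of $H_0$ and $H_1$, I would pick countable dense subsets $\{x_n\}_{n\in\mathbb{N}}\subseteq H_0$ and $\{y_m\}_{m\in\mathbb{N}}\subseteq H_1$ and define
\[
   d(S,T) \coloneqq \sum_{n,m\in\mathbb{N}} 2^{-(n+m)} \frac{|\langle (S-T)x_n, y_m\rangle|}{1+|\langle (S-T)x_n, y_m\rangle|}.
\]
This is manifestly a metric on $L(H_0,H_1)$. The key step is to check that on $B_{L(H_0,H_1)}$ the metric topology coincides with the weak operator topology: one direction is immediate since each summand is weak-operator continuous and the series converges uniformly; the other direction uses the uniform bound $\|S-T\|\leq 2$ on the unit ball, together with density of $\{x_n\}$ and $\{y_m\}$, to approximate $|\langle (S-T)x,y\rangle|$ by $|\langle (S-T)x_n,y_m\rangle|$ for all $x\in H_0$ and $y\in H_1$, so that closeness in $d$ forces closeness in any weak-operator basic neighbourhood.

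The main obstacle, if any, is the mild bookkeeping in this last density/approximation argument, since one has to exploit the uniform boundedness on $B_{L(H_0,H_1)}$ to simultaneously approximate in the $H_0$-argument and the $H_1$-duality pairing; once that is carried out, the Hausdorff compactness of $(B_{L(H_0,H_1)},\tau_{\text{wot}})$ together with the continuity of the identity from the weak operator topology to the metric topology (which is coarser by construction) yields the equality of the two topologies automatically.
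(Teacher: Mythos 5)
Your proof is correct and follows essentially the same strategy as the paper: embed $B_{L(H_0,H_1)}$ into the product $\prod_{x\in H_0}\overline B_{H_1}(0,\|x\|)$ of weakly compact balls, invoke Tychonoff and check closedness, then build a metric from countable dense sets of $H_0$ and $H_1$ for the separable case. Your closing remark that the coincidence of the two topologies on the unit ball follows automatically from compactness of $\tau_{\text{wot}}$ together with the Hausdorff property and coarseness of the metric topology is a cleaner way to finish than the direct density/approximation bookkeeping, and is worth using as the primary argument rather than a fallback.
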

\begin{proof}
  Denoting the unit ball of $H_1$ endowed with the weak topology by $B^{\textnormal{w}}_{H_1}$, we obtain that
  \[
      K\coloneqq \prod_{\phi\in H_0} \|\phi\| B^{\textnormal{w}}_{H_1}
  \]
  is compact under the product topology by Tikhonov's theorem and the compactness of $B^{\textnormal{w}}_{H_1}$. It is elementary to show that $B_{L(H_0,H_1)}\subseteq K$ is closed, when $B_{L(H_0,H_1)}$ is endowed with the weak operator topology. If $H_1$ is separable, then $B^{\textnormal{w}}_{H_1}$ is metrisable. If $H_0$ is separable as well, it is then standard to prove that
  \[
   K\times K \ni     (T,S)\mapsto \sum_{n\in\mathbb{N}} 2^{-n} \min\{d(T(\phi_n),S(\phi_n)),1\}
  \]
  metrises the topology on $K$, where $d$ metrises the topology on $B^{\textnormal{w}}_{H_1}$ and $(\phi_n)_{n\in \mathbb{N}}$ is an orthonormal basis for $H_0$.
\end{proof}
We denote by $\tau_{H}$ the initial topology on $\mathcal{M}(\alpha,\beta,(A_0,A_1))$ such that
\begin{align*}
    a & \mapsto a_{00}^{-1} \in L_{\textnormal{w}}(\rge(A_0)) \\
       a & \mapsto a_{10} a_{00}^{-1} \in L_{\textnormal{w}}(\rge(A_0),\rge(A_1^*)) \\
              a & \mapsto a_{00}^{-1}a_{01}  \in L_{\textnormal{w}}(\rge(A_1^*),\rge(A_0)) \\
                            a & \mapsto a_{11}- a_{10} a_{00}^{-1}a_{01}  \in L_{\textnormal{w}}(\rge(A_1^*))
\end{align*}
are continuous, where for Hilbert spaces $K_0$ and $K_1$, $L_{\textnormal{w}}(K_0,K_1)$ denotes the set of bounded linear operators endowed with the weak operator topology.

\begin{remark}
 We note that $\tau_H$ is readily seen to be weaker than both the norm and the strong operator topology on $\mathcal{M}(\alpha,\beta,(A_0,A_1))$. Examples \ref{ex:1st} and \ref{ex:2nd} show that the weak operator topology on $\mathcal{M}(\alpha,\beta,(A_0,A_1))$ and $\tau_H$ cannot be compared in general.
\end{remark}

The following is a reformulation of Theorem~\ref{thm:chH}.

\begin{theorem}\label{thm:chH4}Let $(a_n)_n$ in $\mathcal{M}(\alpha,\beta,(A_0,A_1))$, $a \in L(H_1)$ invertible. Then the following conditions are equivalent:
\begin{enumerate}
 \item[(i)] $(a_n)_n$ nonlocally $H$-converges to $a$;
 \item[(ii)] $a_n \stackrel{\tau_H}{\to}a\in \mathcal{M}(\alpha,\beta,(A_0,A_1))$. 
 \end{enumerate}
\end{theorem}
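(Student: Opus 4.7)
The plan is to observe that Theorem~\ref{thm:chH4} is a direct reformulation of Theorem~\ref{thm:chH} in the language of the initial topology $\tau_H$, and to make this translation precise without any further analytical work.

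First I would recall the defining property of an initial topology: a net (or in our case a sequence) $(a_n)_n$ in $\mathcal{M}(\alpha,\beta,(A_0,A_1))$ converges to $a\in\mathcal{M}(\alpha,\beta,(A_0,A_1))$ in $\tau_H$ if and only if each of the four evaluation maps defining $\tau_H$ sends $(a_n)_n$ to a net converging to the corresponding image of $a$. Spelling this out, $a_n \stackrel{\tau_H}{\to} a \in \mathcal{M}(\alpha,\beta,(A_0,A_1))$ means precisely that $a \in \mathcal{M}(\alpha,\beta,(A_0,A_1))$ and that the four sequences
\[
(a_{n,00}^{-1})_n,\ (a_{n,10}a_{n,00}^{-1})_n,\ (a_{n,00}^{-1}a_{n,01})_n,\ (a_{n,11}-a_{n,10}a_{n,00}^{-1}a_{n,01})_n
\]
converge in the respective weak operator topologies to $a_{00}^{-1}$, $a_{10}a_{00}^{-1}$, $a_{00}^{-1}a_{01}$, and $a_{11}-a_{10}a_{00}^{-1}a_{01}$.

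This list of conditions is literally the statement (ii) of Theorem~\ref{thm:chH}. Hence, under the standing hypothesis that $(a_n)_n \subseteq \mathcal{M}(\alpha,\beta,(A_0,A_1))$ and $a \in L(H_1)$ is continuously invertible, Theorem~\ref{thm:chH} directly gives the desired equivalence: (i) here matches (i) of Theorem~\ref{thm:chH}, while (ii) here matches (ii) of Theorem~\ref{thm:chH}. Thus (i)$\Leftrightarrow$(ii) for Theorem~\ref{thm:chH4} follows at once.

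The only subtlety worth a sentence is a compatibility check on membership in $\mathcal{M}(\alpha,\beta,(A_0,A_1))$: convergence in $\tau_H$ presupposes $a \in \mathcal{M}(\alpha,\beta,(A_0,A_1))$ (because $\tau_H$ is defined only there), whereas in (i) one merely assumes $a$ invertible. But Theorem~\ref{thm:chH}(ii) asserts $a \in \mathcal{M}(\alpha,\beta,(A_0,A_1))$ as part of its conclusion, so both implications are fully covered. Since the entire argument is a translation of notation, there is no genuine obstacle; all of the analytic content (Helmholtz decomposition~\eqref{eq:od}, the block factorisation of Lemma~\ref{lem:boinv}, the solution formulas~\eqref{eq:chH1}--\eqref{eq:chH4}, and Lemma~\ref{lem:conpd}) has already been invested in the proof of Theorem~\ref{thm:chH}.
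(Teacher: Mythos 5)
Your proof is correct and matches the paper's intent exactly: the paper offers no separate proof, stating only that Theorem~\ref{thm:chH4} is a reformulation of Theorem~\ref{thm:chH}, and your argument is precisely the translation via the defining property of the initial topology $\tau_H$ that makes that remark rigorous, including the membership check that $a\in\mathcal{M}(\alpha,\beta,(A_0,A_1))$ is part of the conclusion of Theorem~\ref{thm:chH}(ii).
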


Theorem~\ref{thm:chH4} shows that nonlocal $H$-convergence (of sequences) is actually induced by the topology $\tau_{H}$. Next, we show that $\tau_{H}$ is a Hausdorff topology. Together with Theorem~\ref{thm:chH4}, this yields another proof of Corollary~\ref{cor:Hu}, the uniqueness of the nonlocal $H$-limit.

\begin{proposition}\label{prop:T2} $\tau_H$ is a Hausdorff topology.
\end{proposition}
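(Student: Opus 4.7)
The plan is to use the standard fact that the initial topology generated by a family of maps into Hausdorff spaces is Hausdorff precisely when that family separates points. Since the weak operator topology on $L(K_0,K_1)$ for Hilbert spaces $K_0,K_1$ is itself Hausdorff (two bounded operators agreeing on all scalar products $\langle T\phi,\psi\rangle$ must coincide), it suffices to verify that the four defining maps of $\tau_H$ jointly separate points of $\mathcal{M}(\alpha,\beta,(A_0,A_1))$.

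Thus I would argue as follows. Suppose $a,b\in\mathcal{M}(\alpha,\beta,(A_0,A_1))$ satisfy
\[
  a_{00}^{-1}=b_{00}^{-1},\quad a_{10}a_{00}^{-1}=b_{10}b_{00}^{-1},\quad a_{00}^{-1}a_{01}=b_{00}^{-1}b_{01},\quad a_{11}-a_{10}a_{00}^{-1}a_{01}=b_{11}-b_{10}b_{00}^{-1}b_{01}.
\]
From the first identity one gets $a_{00}=b_{00}$. Right-multiplying the second identity by $a_{00}=b_{00}$, we infer $a_{10}=b_{10}$; left-multiplying the third identity by $a_{00}=b_{00}$, we obtain $a_{01}=b_{01}$. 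The fourth identity then gives $a_{11}=b_{11}$. Combining these four equalities with the block matrix representation
\[
  a=U\begin{pmatrix} a_{00} & a_{01}\\ a_{10} & a_{11}\end{pmatrix}U^*,
\]
and similarly for $b$, where $U$ is the unitary operator from \eqref{eq:U}, yields $a=b$. (This is exactly the same computation underlying Corollary~\ref{cor:Hu}.)

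Since the four evaluation maps defining $\tau_H$ take values in the Hausdorff spaces $L_{\textnormal{w}}(\rge(A_0))$, $L_{\textnormal{w}}(\rge(A_0),\rge(A_1^*))$, $L_{\textnormal{w}}(\rge(A_1^*),\rge(A_0))$, and $L_{\textnormal{w}}(\rge(A_1^*))$, and jointly separate points of $\mathcal{M}(\alpha,\beta,(A_0,A_1))$, the initial topology $\tau_H$ is Hausdorff: given $a\neq b$, pick an index in which the maps disagree, separate the (distinct) images in the corresponding weak operator topology by disjoint open sets, and take their preimages under the relevant evaluation map to obtain disjoint $\tau_H$-open neighbourhoods of $a$ and $b$. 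There is no essential obstacle here; the only thing to be careful about is invoking the separation property in the right target space, which is immediate from the definition of the weak operator topology via the scalar-product semi-norms.
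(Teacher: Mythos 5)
Your proof is correct and follows essentially the same route as the paper's: both reduce to the observation that the four defining maps of $\tau_H$ jointly separate points of $\mathcal{M}(\alpha,\beta,(A_0,A_1))$ and then exploit the Hausdorffness of the weak operator topology. You are somewhat more explicit than the paper in two places — you spell out the block-matrix computation showing the four identities force $a=b$ (the paper merely asserts that one of them must fail, pointing implicitly to Corollary~\ref{cor:Hu}), and you invoke the standard ``initial topology generated by a point-separating family into Hausdorff spaces is Hausdorff'' fact via disjoint open sets rather than the paper's semi-norm phrasing.
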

\begin{proof}
 Let $a,b\in \mathcal{M}(\alpha,\beta,(A_0,A_1))$ with $a\neq b$. It follows that (at least) one of the equalities
 \begin{align*}
 &  a_{00}^{-1} = b_{00}^{-1}\\
 & a_{10} a_{00}^{-1} =  b_{10} b_{00}^{-1}  \\
 & a_{00}^{-1}a_{01}  = b_{00}^{-1}b_{01}\\
 &  a_{11}- a_{10} a_{00}^{-1}a_{01} =  b_{11}- b_{10} b_{00}^{-1}b_{01}
 \end{align*}
 cannot be true. Since the weak operator topology is a Hausdorff topology, we find a suitable continuous semi-norm $p$ such that one of the following 4 statements is true
 \begin{align*}
 &p\big(  a_{00}^{-1}\big) \neq p\big(b_{00}^{-1}\big)\\
 & p\big(a_{10} a_{00}^{-1}\big) \neq  p\big(b_{10} b_{00}^{-1} \big) \\
 & p\big(a_{00}^{-1}a_{01} \big) \neq p\big(b_{00}^{-1}b_{01}\big)\\
 &  p\big(a_{11}- a_{10} a_{00}^{-1}a_{01}\big) \neq  p\big(b_{11}- b_{10} b_{00}^{-1}b_{01}\big).
 \end{align*}
 This implies the assertion.
\end{proof}

The following result is the announced compactness statement.
\begin{theorem}\label{thm:Hcom} The set
\[
  \mathcal{M}_1(\alpha,\beta,(A_0,A_1))\coloneqq \{ a\in \mathcal{M}(\alpha,\beta,(A_0,A_1)); \|a_{00}^{-1}a_{01}\|,\|a_{10}a_{00}^{-1}\|\leq \beta\}
\]
is compact under $\tau_{H}$.
\end{theorem}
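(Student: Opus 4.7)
The plan is to exhibit $\mathcal{M}_1$ as homeomorphic to a closed subset of a compact space. Define
\[
\Phi(a) := \bigl(a_{00}^{-1},\ a_{10}a_{00}^{-1},\ a_{00}^{-1}a_{01},\ a_{11}-a_{10}a_{00}^{-1}a_{01}\bigr),
\]
viewed as a map from $\mathcal{M}_1$ into the product of the four relevant operator spaces, each equipped with the weak operator topology. By the very definition of $\tau_H$, $\Phi$ is a topological embedding whenever injective. Injectivity is purely algebraic: from $\Phi(a)=\Phi(b)$ one reads off $a_{00}=b_{00}$ (inversion of the first component), then $a_{10}=b_{10}$ and $a_{01}=b_{01}$ (right/left multiplication by $a_{00}$), and finally $a_{11}=b_{11}$ from the fourth component.

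Next I would bound each component of $\Phi(a)$ in operator norm so that $\Phi(\mathcal{M}_1)$ lands inside a compact product. From $\Re a_{00}\geq \alpha$ and Lemma~\ref{lem:conpd}(a) one has $\|a_{00}^{-1}\|\leq 1/\alpha$. The bounds $\|a_{10}a_{00}^{-1}\|,\|a_{00}^{-1}a_{01}\|\leq \beta$ are built into the definition of $\mathcal{M}_1$. By Lemma~\ref{lem:boinv}(b), the fourth block equals $(a^{-1})_{11}^{-1}$, and since $\Re (a^{-1})_{11}\geq 1/\beta$, Lemma~\ref{lem:conpd}(a) yields $\|(a^{-1})_{11}^{-1}\|\leq \beta$. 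Hence $\Phi(\mathcal{M}_1)$ sits inside a product of four operator-norm balls, which is compact under the product of weak operator topologies by Theorem~\ref{thm:comp} combined with Tychonoff's theorem.

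The remaining and central step is to prove $\Phi(\mathcal{M}_1)$ is closed in this compact Hausdorff product. Let $(a_\iota)_\iota$ be a net in $\mathcal{M}_1$ with $\Phi(a_\iota)\to (T_0,T_{10},T_{01},T_{11})$. I would construct a candidate operator $a$ via the block decomposition
\[
U^* a U = \begin{pmatrix} T_0^{-1} & T_0^{-1}T_{01} \\ T_{10}T_0^{-1} & T_{11}+T_{10}T_0^{-1}T_{01} \end{pmatrix},
\]
after which a direct computation yields $\Phi(a)=(T_0,T_{10},T_{01},T_{11})$. The principal obstacle is showing $a\in\mathcal{M}_1$, i.e., obtaining the right coercivity and boundedness across the two inversions hiding in the definition of $\mathcal{M}(\alpha,\beta,(A_0,A_1))$. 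Invertibility of $T_0$ together with $\Re a_{00}\geq \alpha$ and $\Re a_{00}^{-1}\geq 1/\beta$ comes from Lemma~\ref{lem:conpd}(d) applied to the net $(a_{\iota,00})$, whose inverses converge weakly to $T_0$. An analogous application of Lemma~\ref{lem:conpd}(d), with the roles of $\alpha$ and $1/\beta$ interchanged, to the net $\bigl((a_\iota^{-1})_{11}\bigr)_\iota$, whose inverses $a_{\iota,11}-a_{\iota,10}a_{\iota,00}^{-1}a_{\iota,01}$ converge weakly to $T_{11}$, delivers invertibility of $T_{11}$ along with $\Re T_{11}\geq \alpha$ and $\Re T_{11}^{-1}\geq 1/\beta$. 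Lemma~\ref{lem:boinv}(a,b) then certifies invertibility of $a$ and identifies $(a^{-1})_{11}=T_{11}^{-1}$, supplying the remaining coercivity inequalities. Finally, $\|T_{10}\|,\|T_{01}\|\leq \beta$ is inherited from the $a_\iota$ by weak-operator lower semicontinuity of the operator norm, so $a\in\mathcal{M}_1$ with $\Phi(a)$ equal to the prescribed limit. Thus $\Phi(\mathcal{M}_1)$ is closed in a compact space, and the embedding $\Phi$ transfers compactness back to $(\mathcal{M}_1,\tau_H)$.
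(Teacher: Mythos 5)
Your proof is correct and follows essentially the same strategy as the paper: it rests on the same ingredients (Theorem~\ref{thm:comp} for WOT-compactness of norm balls, Lemma~\ref{lem:conpd}(d) to preserve coercivity across the two inversions, and Lemma~\ref{lem:boinv} to reconstruct $a$ from the four block quantities). The only difference is presentational — you package the net argument as ``$\Phi$ is a closed embedding into a compact product,'' while the paper extracts subnets directly to witness $\tau_H$-convergence — and the operator $a$ you assemble from $(T_0,T_{10},T_{01},T_{11})$ is the same $b$ the paper constructs, just written in terms of the Schur-complement quantities rather than the raw blocks.
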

\begin{proof}
  Let $(a_\iota)_{\iota}$ be a net in $\mathcal{M}_1(\alpha,\beta,(A_0,A_1))$. By Theorem~\ref{thm:comp}, we may choose a subnet $(a_{\phi(\iota')})_{\iota'}$ such that $a_{\phi(\iota'),00}^{-1}\to b_{00}^{-1}$ for some $b_{00}\in L(\rge(A_0))$ with $\Re b_{00}\geq \alpha$ and $\Re \big(b_{00}^{-1}\big)\geq 1/\beta$. By the boundedness of $(a_\iota)_\iota$, we infer that $(a_{\iota,01})_\iota$ and $(a_{\iota,10})_\iota$ are bounded. Again using Theorem~\ref{thm:comp}, we find a subnet such that
  \[
     a_{\phi'(\iota''),10} a_{\phi'(\iota''),00}^{-1}b_{00}\to b_{10} \text{ and } b_{00}a_{\phi'(\iota''),00}^{-1}a_{\phi'(\iota''),01} \to b_{01}
  \]
  for some $b_{10}\in L(\rge(A_1^*),\rge(A_0))$ and $b_{01}\in L(\rge(A_0),\rge(A_1^*))$. Finally, we find a subnet such that 
  \[
     a_{\phi''(\iota'''),11}-  a_{\phi''(\iota'''),10} a_{\phi''(\iota'''),00}^{-1} a_{\phi''(\iota'''),01} + b_{10} b_{00}^{-1} b_{01}\to b_{11}
  \]
  for some $b_{11}\in L(\rge(A_1^*))$. It is easy to see that $\Re b_{00}\geq \alpha$ and $\Re \big(b_{00}^{-1}\big)\geq 1/\beta$ (see also Lemma \ref{lem:conpd}(d)). Similarly, using Lemma~\ref{lem:boinv} for $(a_{\iota}^{-1})_{11}^{-1}= a_{\iota,11}-  a_{\iota,10} a_{\iota,00}^{-1} a_{\iota,01}$, it follows that $\Re (b_{11}-b_{10}b_{00}^{-1}b_{01})\geq \alpha$ and $\Re\big( (b_{11}-b_{10}b_{00}^{-1}b_{01})^{-1}\big)\geq 1/\beta$ (see also Lemma \ref{lem:conpd}). Next, using Lemma~\ref{lem:boinv}(a), we obtain
  \[
     \begin{pmatrix} b_{00} & b_{01}\\ b_{10} & b_{11}
\end{pmatrix} = \begin{pmatrix} 1 & 0\\ b_{10}b_{00}^{-1} & 1
\end{pmatrix}\begin{pmatrix} b_{00} & 0 \\ 0 & b_{11}-b_{10}b_{00}^{-1}b_{01}
\end{pmatrix}\begin{pmatrix} 1 & b_{00}^{-1} b_{01}\\ 0 & 1
\end{pmatrix}.
  \]
  Thus, we deduce that $b$ is continuously invertible. Hence, $b\in \mathcal{M}(\alpha,\beta,(A_0,A_1))$. Moreover, it is now easy to see that $a_{\phi''(\iota''')}\stackrel{\tau_H}\to b$. It remains to show that $\|b_{00}^{-1}b_{01}\|\leq \beta,\|b_{10}b_{00}^{-1}\|\leq \beta$. For this, we use lower semi-continuity of the operator norm under the weak operator topology. Thus, writing $\iota$ instead of $\phi''(\iota''')$ for simplicity, we obtain
  \begin{align*}
   \| b_{00}^{-1}b_{01}\|&=\| \lim_{\iota} a_{\iota,00}^{-1}a_{\iota,01}\|\leq \liminf_{\iota}\|  a_{\iota,00}^{-1}a_{\iota,01}\|\leq \beta
   \\    \| b_{01}b_{00}^{-1}\|&=\| \lim_{\iota} a_{\iota,01}a_{\iota,00}^{-1}\|\leq \liminf_{\iota}\| a_{\iota,01} a_{\iota,00}^{-1}\|\leq \beta.   
  \end{align*}Hence, $b\in \mathcal{M}_1(\alpha,\beta,(A_0,A_1))$.
\end{proof}
\begin{remark}
 Let $\mathcal{B}\subseteq \mathcal{M}(\alpha,\beta,(A_0,A_1))$ be bounded in $L(H_1)$. Then we find $\alpha',\beta'\in \mathbb{R}$ such that $\mathcal{B}\subseteq \mathcal{M}_1(\alpha',\beta',(A_0,A_1))$. Indeed, $\alpha'=\alpha$ and $\beta'=\sup\{\|b_{01}b_{00}^{-1}\|\lor \|b_{00}^{-1}b_{01}\|; b\in\mathcal{B}\}\lor \beta$ are possible choices. As a consequence, we obtain with Theorem~\ref{thm:Hcom} that $\mathcal{B}$ is relatively compact under $\tau_{H}$.
\end{remark}

Let us revisit Examples \ref{ex:dpbc} and \ref{ex:mcv}. 

\begin{example}\label{ex:mcv2} We shall use the notation and operators introduced in Example~\ref{ex:mcv}. We have already seen that there exists $\alpha,\beta>0$ such that $1-k_n*\in \mathcal{M}(\alpha,\beta,(\interior{\grad},\interior{\curl}))$ for all $n\in\mathbb{N}$. Moreover, we have assumed that $(1-k_n*)_n$ is a bounded sequence in $L(L^2(\Omega)^3)$. Thus, by Theorem~\ref{thm:met}, we find a strictly increasing sequence of positive integers $\kappa\colon \mathbb{N}\to \mathbb{N}$ such that $(1-k_{\kappa(n)}*)_n$ is nonlocally $H$-convergent to some $a\in \mathcal{M}(\alpha,\beta,(\interior{\grad},\interior{\curl}))$. We emphasise that the used compactness statement does not lead to the statement that $a=(1-k*)$ for some convolution-type kernel $k\in L^\infty(\Omega\times \Omega)$. More refined arguments (or assumptions) are needed to actually deduce that $a$ has the desired form.
\end{example}

\begin{example}\label{ex:dpbc2} Use the assumptions and operators introduced in Example~\ref{ex:dpbc}. We have seen that for a sequence $(b_n)_n$ in $L(V)$ satisfying $\Re b_n\geq \alpha$ and $\Re \big(b_n^{-1}\big)\geq 1/\beta$ for all $n\in \mathbb{N}$ and an invertible $b\in L(V)$ that 
\[    a(b_n) \to a(b)\text{ $H$-nonlocally w.r.t.~$(\interior{\grad},\interior{\curl})$}  \iff b_n \to b\text{ in the weak operator topology}
\]and
\begin{multline*}
    a(b_n) \to a(b)\text{ $H$-nonlocally w.r.t.~$({\grad},{\curl})$} \\ \iff b_n^{-1} \to b^{-1}\text{ in the weak operator topology.}
\end{multline*}
Thus, in these special cases, with an application of Theorem~\ref{thm:met},  we obtain special cases of Theorem~\ref{thm:comp} for $H_0=H_1=V$ in the separable case.
\end{example}

\begin{remark} (a) We have $  \mathcal{M}(\alpha,\beta,(A_0,A_1)) \nsubseteq \{ a \in L(H_1); \Re a\geq \alpha', \Re \big(a^{-1}\big)\geq 1/\beta'\}$ for any $\alpha', \beta'>0$. Indeed, $a= U\left(\begin{smallmatrix} 1 &1 \\ 2  & (9-\epsilon)/4 \end{smallmatrix}\right)U^*$ for any $\epsilon\in (0,1)$ with $U$ as in \eqref{eq:U} belongs to $\mathcal{M}(\alpha,\beta,(A_0,A_1)) $ for some $\alpha,\beta>0$ but fails to satisfy $\Re a\geq 0$.

(b) Let $b=b^*\in \mathcal{M}(\alpha,\beta,(A_0,A_1))$. Then $\Re b=b\geq \alpha'$ for some $\alpha'>0$ (and consequently $\Re \big(b^{-1}\big)\geq 1/\beta'$ for some $\beta'>0$). Indeed, we have for all $\psi\coloneqq \begin{pmatrix} 1 & 0\\ - b_{00}^{-1} b_{01} & 1
\end{pmatrix}\phi$:
   \[
    \Big \langle \begin{pmatrix} b_{11} & b_{10}\\ b_{01} & b_{00}
\end{pmatrix} \psi,\psi\Big\rangle = \Big\langle \begin{pmatrix} b_{11}-b_{10}b_{00}^{-1}b_{01} & 0 \\ 0 & b_{00}
\end{pmatrix}\phi, \phi \Big\rangle\geq \alpha \langle \phi,\phi\rangle\geq \left(\alpha/(1+\|b_{00}^{-1} b_{01}\|)^2\right)\|\psi\|^2.
  \]
\end{remark}

\begin{theorem}\label{thm:met} Assume $H_1$ to be separable. Then $(\mathcal{M}_1(\alpha,\beta,(A_0,A_1)),\tau_H)$ is metrisable and sequentially compact.
\end{theorem}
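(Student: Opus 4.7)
The plan is to realise $(\mathcal{M}_1(\alpha,\beta,(A_0,A_1)),\tau_H)$ as (homeomorphic to) a subspace of a bounded, compact, metrisable region inside a finite product of operator spaces under their weak operator topologies, and then to invoke the compactness already proven in Theorem~\ref{thm:Hcom}. As a first step, I would collect uniform norm bounds on the four coordinate quantities defining $\tau_H$: Lemma~\ref{lem:conpd}(a) together with $\Re a_{00}\geq \alpha$ gives $\|a_{00}^{-1}\|\leq 1/\alpha$; the norms of $a_{10}a_{00}^{-1}$ and $a_{00}^{-1}a_{01}$ are bounded by $\beta$ by the very definition of $\mathcal{M}_1$; and Lemma~\ref{lem:boinv}(b) identifies $a_{11}-a_{10}a_{00}^{-1}a_{01}$ with $\bigl((a^{-1})_{11}\bigr)^{-1}$, whose norm is at most $\beta$ by a second appeal to Lemma~\ref{lem:conpd}(a) and to $\Re (a^{-1})_{11}\geq 1/\beta$.

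Since $H_1$ is separable, so are its closed subspaces $R_0\coloneqq \overline{\rge}(A_0)$ and $R_1\coloneqq \overline{\rge}(A_1^*)$. Theorem~\ref{thm:comp} then guarantees that the closed balls of the radii found above in $L(R_0)$, $L(R_0,R_1)$, $L(R_1,R_0)$, and $L(R_1)$ are each compact and metrisable in their weak operator topologies, and hence so is their finite product $Y$. By the very definition of $\tau_H$, this topology coincides with the initial topology on $\mathcal{M}_1(\alpha,\beta,(A_0,A_1))$ for the map
\[
\Phi\colon \mathcal{M}_1(\alpha,\beta,(A_0,A_1))\to Y,\quad a\mapsto \bigl(a_{00}^{-1},\,a_{10}a_{00}^{-1},\,a_{00}^{-1}a_{01},\,a_{11}-a_{10}a_{00}^{-1}a_{01}\bigr),
\]
where $Y$ carries the product of the four weak operator topologies.

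The map $\Phi$ is injective: from its image one reconstructs $a_{00}=(a_{00}^{-1})^{-1}$, then $a_{10}=(a_{10}a_{00}^{-1})\,a_{00}$, $a_{01}=a_{00}\,(a_{00}^{-1}a_{01})$, and $a_{11}=(a_{11}-a_{10}a_{00}^{-1}a_{01})+a_{10}a_{00}^{-1}a_{01}$, so that Lemma~\ref{lem:boinv}(a) recovers $a$ uniquely. An injection carrying the initial topology is a homeomorphism onto its image, so $(\mathcal{M}_1(\alpha,\beta,(A_0,A_1)),\tau_H)$ is homeomorphic to a subspace of the metrisable space $Y$ and is therefore metrisable. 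Combined with the $\tau_H$-compactness established in Theorem~\ref{thm:Hcom}, metrisability upgrades to sequential compactness, which completes the argument. The only substantive obstacle has already been dealt with, namely compactness in Theorem~\ref{thm:Hcom}; what remains here is a clean initial-topology packaging together with the elementary norm bounds.
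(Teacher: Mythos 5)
Your proof is correct, and it shares the paper's essential ingredients: the same four block-matrix functionals, the separability of $\rge(A_0)$ and $\rge(A_1^*)$, and Theorem~\ref{thm:comp} as the source of metrisability and compactness of weak-operator-topology balls. Where you diverge is in how metrisability is finally extracted. The paper constructs an explicit candidate metric $d_H$ as a sum of the component metrics, argues (via the Proposition~\ref{prop:T2} reasoning, which amounts to the same point-separation as your injectivity of $\Phi$) that $d_H$ is a genuine metric rather than a pseudometric, and then has to invoke the $\tau_H$-compactness of $\mathcal{M}_1(\alpha,\beta,(A_0,A_1))$ from Theorem~\ref{thm:Hcom} to promote the continuous bijection $(\mathcal{M}_1,\tau_H)\hookrightarrow(\mathcal{M}_1,d_H)$ to a homeomorphism. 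You instead note that $\tau_H$ is, by definition, the initial topology for the single map $\Phi$ into the finite product $Y$, and that any injection carrying the initial topology is automatically a topological embedding; metrisability is then inherited from the metrisable space $Y$ without any detour through compactness. This gives a slightly cleaner logical structure, since Theorem~\ref{thm:Hcom} is used only at the very end to pass from compact metrisable to sequentially compact, and it makes the role of injectivity explicit rather than buried in a reference. Your collection of explicit norm bounds (in particular $\|a_{00}^{-1}\|\leq 1/\alpha$ and the identification of $a_{11}-a_{10}a_{00}^{-1}a_{01}$ with $\bigl((a^{-1})_{11}\bigr)^{-1}$, giving the bound $\beta$) is also a bit more careful than the paper's choice of a uniform radius $\beta$, although the exact constants are immaterial to the argument.
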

\begin{proof}
 Since $\rge(A_0)\subseteq H_1$ and $\rge(A_1^*)\subseteq H_1$ both these subsets are separable. We abbreviate $\mathcal{M}\coloneqq \mathcal{M}_1(\alpha,\beta,(A_0,A_1))$. We put
 \begin{align*}
   &\Phi_{00} \colon \mathcal{M} \ni a\mapsto a_{00}^{-1} \in \beta B_{L(\rge(A_0))}\\
   &\Phi_{01} \colon \mathcal{M} \ni a\mapsto a_{00}^{-1}a_{01} \in \beta B_{L(\rge(A_1^*),\rge(A_0))}\\
   &\Phi_{10} \colon \mathcal{M} \ni a\mapsto a_{10}a_{00}^{-1} \in \beta B_{L(\rge(A_0),\rge(A_1^*))}\\
   &\Phi_{11} \colon \mathcal{M} \ni a\mapsto a_{11}- a_{10}a_{00}^{-1}a_{01} \in \beta B_{L(\rge(A_1^*))}.
 \end{align*}
 By Theorem~\ref{thm:comp} there exists metrics $d_{00}$, $d_{01}$, $d_{10}$, and $d_{11}$ inducing the weak operator topology on $\beta B_{L(\rge(A_0))}$, $\beta B_{L(\rge(A_1^*),\rge(A_0))}$, $\beta B_{L(\rge(A_0),\rge(A_1^*))}$, and $\beta B_{L(\rge(A_1^*))}$. We define
 \begin{align*}
   d_H \colon \mathcal{M}\times \mathcal{M} & \to [0,\infty)\\
                (a,b)&\mapsto \sum_{j,k \in\{0,1\}} d_{jk}(\Phi_{jk}(a),\Phi_{jk}(b)).
 \end{align*}
 As in the proof of Proposition~\ref{prop:T2}, we verify that $(\mathcal{M},d_H)$ is a metric space. Moreover, by definition, the identity mapping
 \[
     (\mathcal{M},\tau_H) \hookrightarrow (\mathcal{M},d_H) 
 \]
 is continuous and onto. Since $(\mathcal{M}, d_H)$ is a Hausdorff space and $(\mathcal{M},\tau_H)$ is compact by Theorem~\ref{thm:Hcom}, we infer that $  (\mathcal{M},\tau_H) \hookrightarrow (\mathcal{M},d_H) $ is a homeomorphism. Hence, $  (\mathcal{M},\tau_H) $ is metrisable. Sequential compactness is now immediate since compact metric spaces are sequentially compact.
\end{proof}

We draw an important consequence of the compactness result, which establishes the connection from local to nonlocal $H$-convergence. We recall Example \ref{ex:diffo}(a2) and Example \ref{ex:dfnf} in order to deduce $(\interior{\grad},\interior{\curl})$ is compact and exact, if the underlying domain $\Omega$ is a bounded weak Lipschitz domain with $\mathbb{R}^3\setminus \Omega$ connected. In fact, due to our abstract reasoning, the assumption of $(\interior{\grad},\interior{\curl})$ being compact and exact is the assumption, we actually need in the next statement.

\begin{theorem}\label{thm:lHnlH} Let $\Omega\subseteq \mathbb{R}^3$ be a bounded weak Lipschitz domain with connected complement.  Let $(a_n)_n$ in $M(\alpha,\beta,\Omega)$, $a\in M(\alpha,\beta,\Omega)$. Then the following conditions are equivalent:
\begin{enumerate}
 \item[(i)] $(a_n)_n$ locally $H$-converges to $a$, that is, for all $f\in H^{-1}(\Omega)$ and corresponding solutions $(u_n)_n$ in $H_0^1(\Omega)$ of
 \[
    \langle a_n \grad u_n ,\grad \phi \rangle =  f(\phi) \quad (\phi\in H_0^1(\Omega))
 \]
 we have $u_n \rightharpoonup u\in H_0^1(\Omega)$, $a_n \grad u_n \rightharpoonup a\grad u$, where $u$ in $H_0^1(\Omega)$ satisfies
 \[
    \langle a \grad u ,\grad \phi \rangle =  f(\phi) \quad (\phi\in H_0^1(\Omega)).
 \]
 \item[(ii)] $(a_n)_n$ nonlocally $H$-converges to $a$ with respect to $(\interior{\grad},\interior{\curl})$.
\end{enumerate}
\end{theorem}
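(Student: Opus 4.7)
The direction (ii)$\Rightarrow$(i) is immediate: restricting the statement in the definition of nonlocal $H$-convergence to the equations involving $A_0=\interior{\grad}$ and the sequence $(u_n)_n$ literally recovers the definition of local $H$-convergence, so no separate work is needed.

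For (i)$\Rightarrow$(ii) my plan is a subsequence-and-uniqueness argument powered by the compactness result for $\tau_H$. Since $(a_n)_n\subseteq M(\alpha,\beta,\Omega)$ is uniformly bounded in $L(L^2(\Omega)^3)$ by $\beta$, one has $(a_n)_n\subseteq \mathcal{M}_1(\alpha',\beta',(\interior{\grad},\interior{\curl}))$ for suitable $\alpha',\beta'>0$. By Theorem~\ref{thm:Hcom} together with Theorem~\ref{thm:met}, every subsequence of $(a_n)_n$ admits a further subsequence that nonlocally $H$-converges to some $\tilde a\in\mathcal{M}(\alpha',\beta',(\interior{\grad},\interior{\curl}))$. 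It therefore suffices to show that $\tilde a=a$ for every such subsequential limit $\tilde a$; the conclusion then follows via Theorem~\ref{thm:chH4} and the subsequence principle.

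The identification of $\tilde a$ proceeds block by block along the Helmholtz-type decomposition $L^2(\Omega)^3=\rge(\interior{\grad})\oplus\rge(\curl)$. By the already-established (ii)$\Rightarrow$(i), the subsequence also locally $H$-converges (in the generalised sense admitting a nonlocal limit) to $\tilde a$; uniqueness of the weak limits of $u_n$ in $H_0^1(\Omega)$ and of $a_n\grad u_n$ in $L^2(\Omega)^3$ forces $\tilde a q=aq$ for every $q\in\rge(\interior{\grad})$, which in block form is $\tilde a_{00}=a_{00}$ and $\tilde a_{10}=a_{10}$. The classical transposition invariance of local $H$-convergence for multiplication operators (so $(a_n^*)_n$ locally $H$-converges to $a^*$) combined with Corollary~\ref{cor:chH1} and the same reasoning applied to the adjoint subsequence yields $\tilde a_{01}=a_{01}$.

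The main obstacle is the identification $\tilde a_{11}=a_{11}$, for which I would exploit the curl-part of the nonlocal $H$-convergence of the subsequence. For arbitrary $g\in H_{\textrm{sol}}(\curl,\Omega)^*$ the associated solutions $v_n\in H_{\textrm{sol}}(\curl,\Omega)$ satisfy $v_n\rightharpoonup v$ and $w_n\coloneqq a_n^{-1}\curl v_n\rightharpoonup \tilde a^{-1}\curl v$ in $L^2(\Omega)^3$, while on the other hand $a_n w_n=\curl v_n\rightharpoonup\curl v$ in $L^2(\Omega)^3$. Because $\dive(a_n w_n)=\dive\curl v_n=0$ and $\curl w_n$ is controlled by $g$ in $H^{-1}$ up to an irrotational term coming from the variational equation, a global div-curl lemma of the type established in \cite{W17_DCL}, combined with the local $H$-convergence $a_n\to a$ (the same mechanism underlying Example~\ref{ex:2nd} in the non-periodic setting), identifies the weak limit of $a_n w_n$ with $a\,(\textrm{w-}\lim_n w_n)=a\tilde a^{-1}\curl v$. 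Comparing with $a_nw_n\rightharpoonup \curl v$ then gives $\tilde a^{-1}\curl v=a^{-1}\curl v$. As $g$ varies over $H_{\textrm{sol}}(\curl,\Omega)^*$, $\curl v$ exhausts $\rge(\curl)$, so $\tilde a^{-1}|_{\rge(\curl)}=a^{-1}|_{\rge(\curl)}$; reading off the $(1,1)$-block via the Schur-type formula in Lemma~\ref{lem:boinv}(b) and using the already-established agreement of the other three blocks finally gives $\tilde a_{11}=a_{11}$. Hence $\tilde a=a$, which completes the argument.
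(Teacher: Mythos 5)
Your proposal has the same overall architecture as the paper's proof: extract, via Theorem~\ref{thm:Hcom}, a subsequence that nonlocally $H$-converges to some $\tilde a\in\mathcal{M}(\alpha,\beta,(\interior{\grad},\interior{\curl}))$, show that $\tilde a=a$, and finish with the subsequence principle. The difference is that the paper identifies $\tilde a=a$ in one line, appealing to ``uniqueness of the local $H$-limit'' (Tartar), whereas you unpack this into the block-by-block identification along $L^2(\Omega)^3=\rge(\interior{\grad})\oplus\rge(\curl)$. Your unpacking is actually the more honest account: local $H$-convergence alone forces $a\grad u=\tilde a\grad u$ for all $u\in H^1_0(\Omega)$, which gives $\tilde a_{00}=a_{00}$ and $\tilde a_{10}=a_{10}$ but says nothing about the action on $\kar(\dive)=\rge(\curl)$ --- exactly the phenomenon emphasised in the paper's introduction. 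So your observation that there remain two more blocks to pin down, and your use of the adjoint trick via Corollary~\ref{cor:chH1} and the transposition invariance of local $H$-convergence to get $\tilde a_{01}=a_{01}$, is correct and genuinely clarifies what the paper's terse step is hiding.

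However, your argument for the remaining block $\tilde a_{11}=a_{11}$ has a circularity problem in the logical ordering of the paper. You invoke ``a global div-curl lemma \ldots combined with the local $H$-convergence $a_n\to a$ (the same mechanism underlying Example~\ref{ex:2nd} in the non-periodic setting)'' to conclude that $a_{\kappa(n)}w_n\rightharpoonup a\,(\mathrm{w\text{-}lim}\,w_n)$. Spelled out, the fact you need is: if $(a_n)$ locally $H$-converges to $a$, $w_n\rightharpoonup w$, $\dive(a_nw_n)$ is relatively compact in $H^{-1}(\Omega)$, and $\interior{\curl}\,w_n$ is relatively compact, then $a_nw_n\rightharpoonup aw$. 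That is precisely the implication (i)$\Rightarrow$(ii) of Theorem~\ref{thm:DCLch}, and in the paper that theorem is \emph{derived from} Theorem~\ref{thm:lHnlH} via Theorem~\ref{thm:chH3}; Example~\ref{ex:2nd} itself only covers periodic coefficients through \cite[Lemma 4.5]{Jikov1994}. To avoid the circularity you must appeal to an \emph{independent} source for this classical Murat--Tartar div-curl characterisation of local $H$-convergence (for instance \cite[p.~10]{Tartar1997}, as the paper itself remarks in Section~\ref{sec:dct}), or re-derive it from scratch via oscillating test functions. With that citation made explicit the proof closes; as written it borrows a consequence of the theorem it is proving. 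Incidentally, this also reveals that the paper's own one-line appeal to ``uniqueness of the local $H$-limit'' is implicitly drawing on the same classical material, so your analysis usefully surfaces a hidden dependency.
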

\begin{proof}
  The implication `(ii)$\Rightarrow$(i)' has been settled in Remark~\ref{rem:GH}(b) together with Theorem~\ref{thm:chH} (see also Theorem~\ref{thm:chHdet}). We shall assume (i). By Theorem~\ref{thm:Hcom}, we may choose a subsequence $(a_{\kappa(n)})_n$ of $(a_n)_n$, which nonlocally $H$-converges to some $b$. From the implication `(ii)$\Rightarrow$(i)' it follows that $(a_{\kappa(n)})_n$ locally $H$-converges to $b$.  Since local $H$-convergence is induced by a topology, see \cite[p. 82]{Tartar2009}, we deduce that $(a_{\kappa(n)})_n$ locally $H$-converges to $a$. By uniqueness of the local $H$-limit (see again \cite[p. 82]{Tartar2009}), we obtain $a=b$. A subsequence principle concludes the proof.
\end{proof}

\begin{remark}\label{rem:neu} Given $\Omega\subseteq \mathbb{R}^3$ a simply connected bounded weak Lipschitz domain  in order that $(\grad,\curl)$ is compact and exact; see Examples \ref{ex:diffo} and \ref{ex:dfnf}. Let $(a_n)_n$ and $a$ belong to $M(\alpha,\beta,\Omega)$. By \cite[Lemma 10.3]{Tartar2009} local $H$-convergence is independent of the attached boundary conditions. Thus, in particular, with an analogous proof to the one in Theorem~\ref{thm:lHnlH}, it is possible to show that $(a_n)_n$ locally $H$-converges to $a$, if and only if $(a_n)_n$ nonlocally $H$-converges to $a$ with respect to $(\grad,\curl)$. 
\end{remark}

\begin{remark} Another way of stating Theorem~\ref{thm:lHnlH} is the following. Let $\tau_{\textrm{loc}H}$ be the (metrisable) topology induced on $M(\alpha,\beta,\Omega)$ by local $H$-convergence. Then
\[
 (  M(\alpha,\beta,\Omega),\tau_H) \hookrightarrow  (  M(\alpha,\beta,\Omega),\tau_{\textrm{loc}H})
\] is a homeomorphism. Note that \cite[Theorem 6.5]{Tartar2009} states that $(  M(\alpha,\beta,\Omega),\tau_{\textrm{loc}H})$ is sequentially compact. Hence, so is $ (  M(\alpha,\beta,\Omega),\tau_H)$.
\end{remark}

An immediate corollary is a homogenisation result for elliptic equations involving the $\curl$-operator. We also refer to the explicit descriptions of the domain of the $\curl$-operator derived in Example~\ref{ex:np1}(b).

\begin{corollary}\label{cor:lHnlH} Let $\Omega\subseteq \mathbb{R}^3$  be a bounded weak Lipschitz domain with connected complement.  Let $(a_n)_n$ in $M(\alpha,\beta,\Omega)$, $a\in M(\alpha,\beta,\Omega)$. Assume that $(a_n)_n$ locally $H$-converges to $a$.

Then, for all $g \in H_{\textnormal{sol}}(\curl,\Omega)^*$ and solutions $(v_n)_n$ in $ H_{\textnormal{sol}}(\curl,\Omega)$ of
 \[
    \langle a_n^{-1} \curl v_n ,\curl \psi \rangle =  g(\psi) \quad (\psi\in H_{\textnormal{sol}}(\curl,\Omega)),
 \]
 we have $v_n \rightharpoonup v\in H_{\textnormal{sol}}(\curl,\Omega)$, $a_n^{-1} \curl v_n \rightharpoonup a^{-1}\curl v\in L^2(\Omega)^3$, where $v\in H_{\textnormal{sol}}(\curl,\Omega)$ satisfies
 \[
    \langle a^{-1} \curl v ,\curl \psi \rangle = g(\psi) \quad (\psi\in H_{\textnormal{sol}}(\curl,\Omega)).
 \]
\end{corollary}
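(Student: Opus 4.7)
The plan is to derive this statement as an almost immediate consequence of Theorem~\ref{thm:lHnlH} together with the very definition of nonlocal $H$-convergence, so the substance of the proof lies in invoking the right identifications.

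First, I would verify that the multiplication operators associated with elements of $M(\alpha,\beta,\Omega)$ sit inside $\mathcal{M}(\alpha,\beta,(\interior{\grad},\interior{\curl}))$. Under the standing hypothesis, any $a\in M(\alpha,\beta,\Omega)$ acts as multiplication on $L^2(\Omega)^3$ with $\Re\langle aq,q\rangle \ge \alpha\|q\|^2$ and $\Re\langle a^{-1}q,q\rangle \ge (1/\beta)\|q\|^2$ for all $q\in L^2(\Omega)^3$; these estimates are inherited by the compressions $a_{00}$, $(a^{-1})_{11}$ and their inverses, which yields the membership in $\mathcal{M}(\alpha,\beta,(\interior{\grad},\interior{\curl}))$. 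The hypothesis on $\Omega$ (bounded weak Lipschitz with connected complement) ensures via Example~\ref{ex:diffo}(a2) and Example~\ref{ex:dfnf} that $(\interior{\grad},\interior{\curl})$ is compact and exact, so in particular closed and exact, placing us within the setting where nonlocal $H$-convergence is defined.

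Next, I would apply Theorem~\ref{thm:lHnlH}: the assumed local $H$-convergence of $(a_n)_n$ to $a$ is equivalent to nonlocal $H$-convergence with respect to $(\interior{\grad},\interior{\curl})$. The conclusions claimed for $(v_n)_n$, namely weak convergence of $v_n$ to some $v$ in $H(\curl,\Omega)$, weak convergence of $a_n^{-1}\curl v_n$ to $a^{-1}\curl v$ in $L^2(\Omega)^3$, and the limit variational equation, are then literally the bullets in the definition of nonlocal $H$-convergence pertaining to $(v_n)_n$, $(a_n^{-1}A_1^*v_n)_n$ and the limit equation for $v$, once one identifies $\dom(\mathcal{A}_1^*)$ with $H_{\textnormal{sol}}(\curl,\Omega)$ as in Example~\ref{ex:cala}(a).

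There is no genuine obstacle here: the heavy lifting has been carried out in Theorem~\ref{thm:lHnlH} (whose proof in turn relied on the compactness Theorem~\ref{thm:Hcom} and a subsequence argument for the uniqueness of local $H$-limits). The only bookkeeping to attend to is the translation between the abstract objects $A_0=\interior{\grad}$, $A_1=\interior{\curl}$, $\dom(\mathcal{A}_1^*)$, $\rge(A_1^*)$ and their concrete $L^2$-realisations $H_{\textnormal{sol}}(\curl,\Omega)$, $\rge(\curl)\subseteq L^2(\Omega)^3$, which is supplied by the worked examples in Section~\ref{sec:com}.
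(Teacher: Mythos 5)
Your proposal is correct and follows essentially the same route as the paper, which indeed presents this corollary as an immediate consequence of Theorem~\ref{thm:lHnlH}: local $H$-convergence is upgraded to nonlocal $H$-convergence w.r.t.\ $(\interior{\grad},\interior{\curl})$, and the claimed conclusions are read off from the second half of the definition of nonlocal $H$-convergence using the identification $\dom(\mathcal{A}_1^*)=H_{\textnormal{sol}}(\curl,\Omega)$ from Example~\ref{ex:cala}(a). The only small point worth making explicit is that the weak limit $v$ automatically lies in $H_{\textnormal{sol}}(\curl,\Omega)$ (not merely $H(\curl,\Omega)$) because $H_{\textnormal{sol}}(\curl,\Omega)$ is a closed subspace of $H(\curl,\Omega)$, hence weakly closed, and all $v_n$ belong to it.
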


\begin{remark} (a) In the light of Remark \ref{rem:GH}, we note that Corollary \ref{cor:lHnlH} particularly settles the convergence of $a_{n,11}-a_{n,10}a_{n,00}^{-1}a_{n,01}\to a_{11}-a_{10}a_{00}^{-1}a_{01}$ as $n\to\infty$ in the weak operator topology.

(b) As a consequence of Remark \ref{rem:neu}, we deduce that a similar results hold, where we replace $\curl$ by $\interior{\curl}$.
\end{remark}

\section{A div-curl type characterisation}\label{sec:dct}

Throughout this section, we shall again assume that $(A_0,A_1)$ is closed and exact.

In this section, we want to prove another characterisation of nonlocal $H$-convergence. In fact, this is the characterisation one uses in applications and can thus be viewed as the main abstract result, when characterising nonlocal $H$-convergence. We need variants of the operators $\mathcal{A}_0^{\diamond}$ and $(\mathcal{A}_1^*)^{\diamond}$ that are defined on the whole of $H_1$. We put for all $\phi\in H_1$
\[
   \mathcal{A}_{0,\textnormal{k}}^{\diamond} (\phi) = \mathcal{A}_{0}^\diamond(\pi_0 \phi)\text{ and }   (\mathcal{A}_{1}^*)_{\textnormal{k}}^{\diamond} (\phi) = (\mathcal{A}_{1}^*)^\diamond(\pi_1 \phi),
\]where $\pi_0$ and $\pi_1$ are the orthogonal projections on $\rge(A_0)=\kar(A_0^*)^\bot$ and $\rge(A_1^*)=\kar(A_1)^\bot$. Note that this definition is consistent with $A_0^*$ and $A_1$ in the sense that we have
\[
     \mathcal{A}_{0,\textnormal{k}}^{\diamond} = A_0^*\text{ on } \dom (A_0^*)
\]
and
\[
    (\mathcal{A}_{1}^*)_{\textnormal{k}}^{\diamond} = A_1  \text{ on } \dom (A_1).
\]
\begin{example}\label{ex:cala2} Recall the setting of Example \ref{ex:cala}(a). We have realised that $\mathcal{A}_0\colon H_0^1(\Omega)\to \rge(\interior{\grad}), u\mapsto \grad u$. Then it is not hard to see that 
\[
\mathcal{A}_0^\diamond \colon \rge(\interior{\grad}) \to H^{-1}(\Omega), q\mapsto \dive q.
\]
On the other hand $\dive q=0$ for all $q\in \rge(\interior{\grad})^\bot=\kar(\dive)\subseteq L^2(\Omega)^3$, we deduce that
\[
   \mathcal{A}_{0,\textnormal{k}}^\diamond \colon L^2(\Omega)^3 \to H^{-1}(\Omega), q\mapsto \dive q.
\]
\end{example}

\begin{theorem}\label{thm:chH3} Let $(a_n)_n$ in $\mathcal{M}(\alpha,\beta,(A_0,A_1))$ be bounded, $a\in L(H_1)$, $H_1$ separable.
Then the following statements are equivalent:
\begin{enumerate}
  \item[(i)] $a\in \mathcal{M}(\alpha,\beta,(A_0,A_1))$, and $(a_n)_n$ nonlocally $H$-converges to $a$;
  \item[(ii)]  for all $(q_n)_n$ in $H_1$ weakly convergent to some $q$ in $H_1$ and for all  $\kappa\colon \mathbb{N}\to\mathbb{N}$ strictly monotone we have: Given the two conditions
    \begin{enumerate}
      \item[(a)] $(\mathcal{A}_{0,\textnormal{k}}^\diamond (a_{\kappa(n)} q_n))_n$ is relatively compact in $\dom(\mathcal{A}_0)^*$,
      \item[(b)] $((\mathcal{A}_1^*\big)_{\textnormal{k}}^\diamond (q_n))_n$ is relatively compact in $\dom(\mathcal{A}_1^*)^*$,
    \end{enumerate}
    then $a_{\kappa(n)}q_n\rightharpoonup aq$ as $n\to\infty$.
\end{enumerate}
\end{theorem}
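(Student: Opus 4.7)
For direction (i) $\Rightarrow$ (ii), the key observation is that since $\mathcal{A}_0^{\diamond}$ and $(\mathcal{A}_1^*)^{\diamond}$ are Banach space isomorphisms by Proposition~\ref{prop:varwp}, hypotheses (a) and (b) translate into \emph{strong} relative compactness in $H_1$ of the projected sequences $\pi_0(a_{\kappa(n)}q_n)$ and $\pi_1 q_n$. By a subsequence-of-subsequence argument it therefore suffices to argue along a (sub)sequence along which both projections converge strongly and $(a_{\kappa(n)}q_n)$ converges weakly to some $r\in H_1$, and to show $r=aq$. Writing $q_n = \iota_{\textnormal{r},A_0}q_n^0 + \iota_{\textnormal{r},A_1^*}q_n^1$ with $q_n^0\rightharpoonup q^0$ weakly and $q_n^1 \to q^1$ strongly, and setting $p_n := \iota_{\textnormal{r},A_0}^*(a_{\kappa(n)}q_n) = a_{\kappa(n),00}q_n^0 + a_{\kappa(n),01}q_n^1$ with strong limit $p^0$, the identity
\[
q_n^0 = a_{\kappa(n),00}^{-1}p_n - \bigl(a_{\kappa(n),00}^{-1}a_{\kappa(n),01}\bigr)q_n^1
\]
combined with the weak-operator convergences $a_{\kappa(n),00}^{-1}\to a_{00}^{-1}$ and $a_{\kappa(n),00}^{-1}a_{\kappa(n),01}\to a_{00}^{-1}a_{01}$ from Theorem~\ref{thm:chH} yields $q^0 = a_{00}^{-1}p^0 - a_{00}^{-1}a_{01}q^1$, i.e., $p^0 = a_{00}q^0 + a_{01}q^1$. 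Throughout I use that a weak-operator-topology convergent sequence of operators applied to a strongly convergent sequence of vectors is weakly convergent.

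Substituting this into the block decomposition
\[
\iota_{\textnormal{r},A_1^*}^*(a_{\kappa(n)}q_n) = \bigl(a_{\kappa(n),10}a_{\kappa(n),00}^{-1}\bigr)p_n + \bigl(a_{\kappa(n),11}-a_{\kappa(n),10}a_{\kappa(n),00}^{-1}a_{\kappa(n),01}\bigr)q_n^1
\]
and applying the remaining two weak-operator convergences from Theorem~\ref{thm:chH}, the right-hand side converges weakly to $(a_{10}a_{00}^{-1})p^0 + (a_{11}-a_{10}a_{00}^{-1}a_{01})q^1 = a_{10}q^0 + a_{11}q^1 = \iota_{\textnormal{r},A_1^*}^*(aq)$ once the Schur-complement cross-terms collapse. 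Together with $\iota_{\textnormal{r},A_0}^*(a_{\kappa(n)}q_n) = p_n \to p^0 = \iota_{\textnormal{r},A_0}^*(aq)$, this identifies $r = aq$, so the subsequence principle gives $a_{\kappa(n)}q_n\rightharpoonup aq$.

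For (ii) $\Rightarrow$ (i), the plan is to use the compactness of $\tau_H$ on bounded sets. Since $(a_n)$ is bounded in $L(H_1)$ and contained in $\mathcal{M}(\alpha,\beta,(A_0,A_1))$, it lies inside some $\mathcal{M}_1(\alpha,\beta',(A_0,A_1))$, so by Theorem~\ref{thm:met} every subsequence admits a sub-subsequence $(a_{\kappa(n)})$ that nonlocally $H$-converges to some $b \in \mathcal{M}(\alpha,\beta,(A_0,A_1))$ (the coercivity constants $\alpha,\beta$ are preserved by the arguments in the proof of Theorem~\ref{thm:Hcom} and Lemma~\ref{lem:conpd}). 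The direction proved above then yields $a_{\kappa(n)}q_n\rightharpoonup bq$ for every test sequence $(q_n)\rightharpoonup q$ satisfying (a) and (b), whereas hypothesis (ii) gives $a_{\kappa(n)}q_n\rightharpoonup aq$; therefore $aq=bq$. The decisive construction is that for an arbitrary prescribed $q = \iota_{\textnormal{r},A_0}\xi + \iota_{\textnormal{r},A_1^*}\eta\in H_1$ one may set $p := b_{00}\xi + b_{01}\eta\in\rge(A_0)$ and
\[
q_n := \iota_{\textnormal{r},A_0}\bigl[a_{\kappa(n),00}^{-1}p - a_{\kappa(n),00}^{-1}a_{\kappa(n),01}\eta\bigr] + \iota_{\textnormal{r},A_1^*}\eta;
\]
a direct calculation yields $\pi_0(a_{\kappa(n)}q_n) = \iota_{\textnormal{r},A_0}p$ and $\pi_1 q_n = \iota_{\textnormal{r},A_1^*}\eta$, both \emph{constant in $n$}, so conditions (a) and (b) hold trivially, and the weak-operator convergences from Theorem~\ref{thm:chH} deliver $q_n\rightharpoonup q$. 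Hence $a=b$ on all of $H_1$, which gives $a\in\mathcal{M}(\alpha,\beta,(A_0,A_1))$, and a subsequence principle promotes nonlocal $H$-convergence from sub-subsequences to the full sequence $(a_n)$.

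The main obstacle is precisely the pathology that underlies the theory of nonlocal $H$-convergence: the weak operator topology is compatible neither with multiplication nor with inversion of operators (Proposition~\ref{prop:invdis}). Conditions (a) and (b) are calibrated exactly to upgrade the weak convergences of $\pi_0(a_{\kappa(n)}q_n)$ and of $\pi_1 q_n$ to \emph{strong} convergences, so that whenever one of the four Schur-type combinations from Theorem~\ref{thm:chH} is applied to the relevant vector sequence, its argument is strongly convergent and the product is weakly convergent. Identifying the correct block-matrix manipulation so that all surviving terms fall into this pattern, and, for the converse, producing a test sequence with arbitrary prescribed weak limit for which (a) and (b) are satisfied by constancy of the projections, are both manifestations of this single guiding idea.
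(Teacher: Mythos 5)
Your proof is correct, and both directions take a genuinely different route from the paper.

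For (i)$\Rightarrow$(ii), the paper proceeds via Tartar's method of oscillating test functions: it invokes Corollary~\ref{cor:chH1} to pass to adjoints, constructs oscillating sequences $(v_n)$ and $(s_n)$ from the dual problems, applies the abstract div-curl lemma (Theorem~\ref{thm:DCL}) twice to obtain the two projection identities $\pi_0 r=\pi_0 aq$ and $\pi_1 q=\pi_1 a^{-1}r$, and finally uses Lemma~\ref{lem:sys} to conclude $r=aq$. You bypass all of this. Your observation that $\mathcal{A}_0^\diamond$ and $(\mathcal{A}_1^*)^\diamond$ are isomorphisms (Proposition~\ref{prop:varwp}), so that (a) and (b) are \emph{equivalent} to strong relative compactness of $\pi_0(a_{\kappa(n)}q_n)$ and $\pi_1 q_n$ in $H_1$, lets you apply the four weak-operator convergences of Theorem~\ref{thm:chH} directly to strongly convergent arguments. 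This is more elementary and more transparent; indeed, the same splitting-by-orthogonality idea immediately yields Theorem~\ref{thm:DCL} itself, so your argument effectively inlines the div-curl lemma rather than citing it. For (ii)$\Rightarrow$(i), the paper tests with the solution sequences $q_n=A_0 u_n$ and $q_n=a_{\kappa(n)}^{-1}A_1^*v_n$, obtains the two projection identities $b^{-1}a\pi_0=\pi_0$ and $ab^{-1}\pi_1=\pi_1$, and then needs the algebraic Lemma~\ref{lem:mat} to deduce $a=b$. Your explicitly constructed test sequence $q_n=\iota_{\textnormal{r},A_0}\bigl[a_{\kappa(n),00}^{-1}p-a_{\kappa(n),00}^{-1}a_{\kappa(n),01}\eta\bigr]+\iota_{\textnormal{r},A_1^*}\eta$ — whose projections $\pi_0(a_{\kappa(n)}q_n)=\iota_{\textnormal{r},A_0}p$ and $\pi_1 q_n=\iota_{\textnormal{r},A_1^*}\eta$ are constant in $n$, and whose weak limit sweeps out all of $H_1$ as $(\xi,\eta)$ varies — delivers $aq=bq$ on all of $H_1$ at once, rendering Lemma~\ref{lem:mat} unnecessary. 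What the paper's route buys is a closer alignment with the classical method of oscillating test functions (making the comparison with Tartar/Murat and \cite[Lemma 4.5]{Jikov1994} more visible) and a reusable div-curl lemma; what your route buys is a shorter, self-contained proof with fewer moving parts that exposes the precise role of hypotheses (a) and (b) as upgrades to strong convergence in each block.

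Two small points worth keeping explicit in a written version: the step ``WOT-convergent operators applied to a strongly convergent vector sequence gives weak convergence'' requires uniform norm bounds on the operator sequences, which you have — $\|a_{n,00}^{-1}\|\le 1/\alpha$ from Lemma~\ref{lem:conpd}(a), and the remaining three Schur combinations are bounded because $(a_n)$ is bounded in $L(H_1)$ by hypothesis. And in (ii)$\Rightarrow$(i), the membership of the cluster point $b$ in $\mathcal{M}(\alpha,\beta,(A_0,A_1))$ (with the \emph{same} constants, not just some $\alpha',\beta'$) is indeed what the proof of Theorem~\ref{thm:Hcom} delivers, as you correctly note.
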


\begin{remark}
 In the proof of Theorem~\ref{thm:chH3} the separability of $H_1$ is used only in the implication `(ii)$\Rightarrow$(i)', where we employ sequential compactness of $\mathcal{M}(\alpha,\beta,(A_0,A_1))$ under the topology induced by nonlocal $H$-convergence. We included the separability assumption for convenience. For the seemingly relatively rare occasions, where non-separable Hilbert spaces are considered, we note that the corresponding  reformulation of Theorem \ref{thm:chH3} invokes (sub)nets rather than (sub)sequences. 
\end{remark}

\begin{remark} (a) For the particular case of periodic multiplication operators in $L^2(\Omega)^3$ with $a_n=a_n^*$ so that $(a_n)_n$ locally $H$-converges to $a_{\textnormal{hom}}$, where $a_{\textnormal{hom}}$ is the usual homogenised constant coefficient matrix, the implication `(i)$\Rightarrow$(ii)' is contained in \cite[Lemma 4.5]{Jikov1994}.

(b) In case of local $H$-convergence a variant of Theorem~\ref{thm:chH3} has been stated in \cite[p. 10]{Tartar1997}.
\end{remark}

An application of Theorem~\ref{thm:lHnlH} yields another characterisation of local $H$-convergence. To the best of the author's knowledge this characterisation has not been pointed out in the literature, yet. In any case, the only important point is that $(\interior{\grad},\interior{\curl})$ is closed and exact; see Examples \ref{ex:diffo} and \ref{ex:dfnf}.
\begin{theorem}\label{thm:DCLch} Let $\Omega\subseteq \mathbb{R}^3$ be an open, bounded weak Lipschitz domain with connected complement.  Let $(a_n)_n$ in $M(\alpha,\beta,\Omega)$, $a\in M(\alpha,\beta,\Omega)$. Then the following statements are equivalent:
\begin{enumerate}
  \item[(i)] $(a_n)_n$ locally $H$-converges to $a$;
  \item[(ii)] for all $(q_n)_n$ in $L^2(\Omega)^3$ weakly convergent to some $q$ in $L^2(\Omega)^3$ and $\kappa\colon \mathbb{N}\to\mathbb{N}$ strictly monotone we have: Given the conditions
    \begin{enumerate}
      \item[(a)] $(\dive (a_{\kappa(n)} q_n))_n$ is relatively compact in $H^{-1}(\Omega)$,
      \item[(b)] $(\interior{\curl} (q_n))_n$ is relatively compact in $H_{\textnormal{sol}}(\curl,\Omega)^*$,
    \end{enumerate}
    then $a_{\kappa(n)}q_n\rightharpoonup aq$ as $n\to\infty$.
\end{enumerate}
\end{theorem}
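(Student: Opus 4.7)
The plan is to combine two theorems already at our disposal, namely Theorem~\ref{thm:lHnlH} to pass from local to nonlocal $H$-convergence (with respect to $(\interior{\grad},\interior{\curl})$), and then Theorem~\ref{thm:chH3} to replace the definition of nonlocal $H$-convergence by the announced div--curl type criterion. Note that since $\Omega$ is a bounded weak Lipschitz domain with connected complement, Examples~\ref{ex:diffo} and~\ref{ex:dfnf} guarantee that $(\interior{\grad},\interior{\curl})$ is closed and exact on $H_0=L^2(\Omega)$, $H_1=H_2=L^2(\Omega)^3$, so both theorems apply. In particular, $H_1 = L^2(\Omega)^3$ is separable. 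Moreover, $(a_n)_n \subseteq M(\alpha,\beta,\Omega)$ is bounded in $L(L^2(\Omega)^3)$ (uniformly in $n$ by $\beta$), so also the boundedness assumption of Theorem~\ref{thm:chH3} is satisfied.

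With $(A_0,A_1)=(\interior{\grad},\interior{\curl})$, the auxiliary operators appearing in Theorem~\ref{thm:chH3} have the concrete form announced in (ii). For $\mathcal{A}_{0,\textnormal{k}}^\diamond$: following Example~\ref{ex:dp1}, we have $\mathcal{A}_0\colon H_0^1(\Omega) \to \rge(\interior{\grad})$, $u \mapsto \grad u$, with $\mathcal{A}_0^\diamond = -\dive$ restricted to $\rge(\interior{\grad})$ taking values in $H^{-1}(\Omega)=\dom(\mathcal{A}_0)^*$. Since $\rge(\interior{\grad})^\bot = \kar(\dive)$, applying $\dive$ to the projection $\pi_0 q$ gives the same result as applying $\dive$ to $q$ directly (interpreted distributionally), so $\mathcal{A}_{0,\textnormal{k}}^\diamond$ coincides with $\pm\dive \colon L^2(\Omega)^3 \to H^{-1}(\Omega)$; see Example~\ref{ex:cala2}. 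For $(\mathcal{A}_1^*)_{\textnormal{k}}^\diamond$: following Example~\ref{ex:np1}(b), $\mathcal{A}_1^*\colon H_{\textnormal{sol}}(\curl,\Omega)\to \rge(\curl)$, $q\mapsto \curl q$, and $(\mathcal{A}_1^*)^\diamond$ extends $\interior{\curl}$ to all of $\rge(\curl)$, taking values in $H_{\textnormal{sol}}(\curl,\Omega)^* = \dom(\mathcal{A}_1^*)^*$. Since $\rge(A_1^*)^\bot = \rge(\curl)^\bot = \kar(\interior{\curl})$ is mapped to zero by $\interior{\curl}$, the extension $(\mathcal{A}_1^*)_{\textnormal{k}}^\diamond$ coincides with $\interior{\curl}\colon L^2(\Omega)^3\to H_{\textnormal{sol}}(\curl,\Omega)^*$ (given by duality via $\psi \mapsto \langle q,\curl\psi\rangle_{L^2(\Omega)^3}$ for $\psi \in H_{\textnormal{sol}}(\curl,\Omega)$).

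Using Theorem~\ref{thm:lHnlH}, condition (i) is equivalent to nonlocal $H$-convergence of $(a_n)_n$ to $a$ with respect to $(\interior{\grad},\interior{\curl})$. Applying Theorem~\ref{thm:chH3} with the identifications above, this in turn is equivalent to: for every weakly convergent $(q_n)_n$ in $L^2(\Omega)^3$ with weak limit $q$ and every strictly increasing $\kappa\colon\mathbb{N}\to\mathbb{N}$, relative compactness of $(\dive(a_{\kappa(n)}q_n))_n$ in $H^{-1}(\Omega)$ together with relative compactness of $(\interior{\curl} q_n)_n$ in $H_{\textnormal{sol}}(\curl,\Omega)^*$ implies $a_{\kappa(n)}q_n \rightharpoonup aq$. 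This is exactly condition (ii).

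The only potentially subtle point — and the main obstacle — is the careful verification that the abstract operators $\mathcal{A}_{0,\textnormal{k}}^\diamond$ and $(\mathcal{A}_1^*)_{\textnormal{k}}^\diamond$ really coincide with the distributional $\dive$ and $\interior{\curl}$ in the respective dual spaces. Once the sign conventions are sorted out (they are immaterial for the relative compactness conditions in (a) and (b)), everything else is a direct citation of Theorems~\ref{thm:lHnlH} and~\ref{thm:chH3}.
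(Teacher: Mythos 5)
Your proof is correct and follows exactly the route the paper itself indicates: first Theorem~\ref{thm:lHnlH} translates local $H$-convergence into nonlocal $H$-convergence with respect to $(\interior{\grad},\interior{\curl})$, and then Theorem~\ref{thm:chH3} (with the identifications of $\mathcal{A}_{0,\textnormal{k}}^\diamond$ and $(\mathcal{A}_1^*)_{\textnormal{k}}^\diamond$ from Examples~\ref{ex:cala2} and~\ref{ex:np1}(b)) gives the div--curl formulation. The extra care you take with the concrete form of the abstract dual operators, the separability of $L^2(\Omega)^3$, and the boundedness of $(a_n)_n$ is entirely appropriate and consistent with what the paper leaves implicit.
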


\begin{remark} We note that in Theorem~\ref{thm:DCLch} (with $\Omega$ that admit a continuous extension operator $H^2(\Omega) \to H^2(\mathbb{R}^3)$; by Calderon's extension theorem strong Lipschitz boundary is enough), it is possible to replace $H_{\textnormal{sol}}(\curl,\Omega)^*$ by $H^{-1}(\Omega)$. We refer to \cite[(the proof of) Proposition 3.10]{W17_DCL} for the details.  
\end{remark}

The next example revisits the Examples \ref{ex:mcv} and \ref{ex:mcv2}, which is used in the already mentioned McKean--Vlasov model (\cite{Carillo18}) and in the so-called nonlocal response theory, see \cite[Chapter 10]{Keller2011} as well as \cite{Gorlach2016,Ciattoni2015,Mendez2017}. $\Omega$ is assumed to be bounded and such that $(\interior {\grad},\interior{\curl})$ is closed and exact, which by Examples \ref{ex:diffo} and \ref{ex:dfnf} for instance corresponds to $\Omega$ being a bounded Lipschitz domain with connected complement.

We furthermore note that Theorem~\ref{thm:DCLch} provides the desired characterisation for nonlocal $H$-convergence, which avoids explicitly computing the inverses of the operators considered. In fact, this solves the problem we have encountered at the end of Example \ref{ex:mcv}. Moreover, assuming more regularity of the integral kernels, we are also in the position to answer a part of the question raised in Example \ref{ex:mcv} and specified at the end of Example \ref{ex:mcv2}.

\begin{example}\label{ex:mcv3} Let $\Omega\subseteq\mathbb{R}^3$ be bounded and such that $(\interior {\grad},\interior{\curl})$ is exact and closed.

Let $(a_n)_n$ be nonlocally $H$-converges to $a$ with respect to $(\interior {\grad},\interior{\curl})$. Let $k_n *\phi \coloneqq(x\mapsto  \int_{\Omega} k_n(x-y) \phi(y) dy)$ for some bounded sequence $(k_n)_n$ in $W^{1,\infty}(\mathbb{R}^3)$. Assume that $(k_n)_n$ converges in the weak*-topology to some $k\in L^\infty(\mathbb{R}^3)$. Assume further that there exists $c>0$ such that
  \[
       a_n + k_n* =  (a_n + k_n*)^*\geq c
  \]
  Note that then we find $\alpha,\beta>0$ such that $a_n+k_n* \in \mathcal{M}(\alpha,\beta,(\interior {\grad},\interior{\curl}))$ for all $n\in\mathbb{N}$.
  
  Then $(a_n+k_n*)_n$ nonlocally $H$-converges to $a+k*$.
  
  In order to establish the claim, we will apply the div-curl type characterisation from Theorem~\ref{thm:chH3} (Theorem~\ref{thm:DCLch}). So, let $(q_n)_n$ be a weakly convergent sequence in $L^2(\Omega)^3$ with limit $q$. Further, let $\kappa\colon \mathbb{N}\to \mathbb{N}$ be strictly monotone and assume that
  \begin{enumerate}
   \item[(a)] $(\dive (a_{\kappa(n)}+k_{\kappa(n)}*)q_n)_n$ is relatively compact in $H^{-1}(\Omega)$,
   \item[(b)] $(\interior{\curl} q_n)_n$ is relatively compact in $H_\textnormal{sol}(\curl,\Omega)^*$.
  \end{enumerate}
  
  Note that since $\partial_j k_n \in L^\infty(\mathbb{R}^3)$ for all $j\in\{1,2,3\}$, we obtain that 
  \[
  \dive (k_{\kappa(n)}*q_n) =  \sum_{j=1}^3 \partial_j k_{\kappa(n)}*q_{n,j} \in L^2(\Omega),
  \]
  uniformly in $n$. By the compactness of the embedding $H_0^1(\Omega)\hookrightarrow L^2(\Omega)$, we deduce that
  $ ( \dive k_{\kappa(n)}*q_n )_n$ is relatively compact in $H^{-1}(\Omega)$. Thus, condition (a), yields that $(\dive (a_{\kappa(n)} q_n))_n$ is relatively compact in $H^{-1}(\Omega)$. Thus, by nonlocal $H$-convergence of $(a_n)_n$ to $a$ and Theorem~\ref{thm:chH3}, we infer that $a_{\kappa(n)}q_n \rightharpoonup aq$. Thus, we are left with proving that
  \[
     k_{\kappa(n)}*q_n \rightharpoonup k * q.
  \]
  For this, let $\phi\in L^2(\Omega)^3$ and consider
  \[
     \langle k_{\kappa(n)}*q_n, \phi\rangle  = \langle q_n, k_{\kappa(n)}*\phi \rangle.
  \]
  Next, we see that $(\dive  k_{\kappa(n)}*\phi )_n$ is bounded in $L^2(\Omega)$ and so relatively compact in $H^{-1}(\Omega)$ by the boundedness of $\Omega$. Moreover, we compute for $\psi\in L^2(\Omega)^3$
  \begin{align*}
    \langle k_{\kappa(n)}*\phi,\psi \rangle & = \int_\Omega \langle k_{\kappa(n)}*\phi(x),\psi(x)\rangle dx
    \\ & = \int_\Omega \Big\langle \int_\Omega k_{\kappa(n)}(x-y)\phi(y),\psi(x)\Big\rangle dx.
  \end{align*}
  Since $\Omega $ is bounded, we infer that $\phi\in L^1(\Omega)^3$. Moreover, it is easy to see that $k_n$ converging weakly* to $k$ implies that $k_n(x-\cdot)$ converging weakly* to $k(x-\cdot)$. Thus, we infer by dominated convergence
  \[
    \langle k_{\kappa(n)}*\phi,\psi \rangle \to \langle k*\phi,\psi \rangle.
  \]
   By condition (b) and Theorem~\ref{thm:DCL} below, we thus infer 
  \[
   \langle k_{\kappa(n)}*q_n, \phi\rangle  = \langle q_n, k_{\kappa(n)}*\phi \rangle \to \langle q, k*\phi \rangle = \langle k*q, \phi \rangle,
  \]
  which shows the assertion.
\end{example}

The proof of Theorem \ref{thm:chH3} needs some prerequisites. The first one is a global div-curl type result, see \cite[Theorem 2.4]{W17_DCL}; see also \cite{Pauly2017} for several applications and \cite[Theorem 3.1]{Chen2017} for a Banach space setting. We shall furthermore refer to \cite{Misur2017} and the references therein for a guide to the literature for other results and approaches to the div-curl lemma.

\begin{theorem}[{{\cite[Theorem 2.4]{W17_DCL}}}]\label{thm:DCL} Let $(q_n)_n, (r_n)_n$ be weakly convergent in $H_1$. Assume that
\[
    (\mathcal{A}_{0,\textnormal{k}}^\diamond q_n)_n\ \text{and} \ \big((\mathcal{A}_1^*)_{\textnormal{k}}^\diamond r_n\big)_n
\] 
are relatively compact in $\dom(\mathcal{A}_0)^*$ and $\dom(\mathcal{A}_1^*)^*$, respectively.

Then
\[
    \lim_{n\to\infty} \langle q_n,r_n \rangle_{H_1} =\Big\langle \textnormal{w-}\lim_{n\to\infty}q_n, \textnormal{w-}\lim_{n\to\infty}r_n\Big\rangle_{H_1}.
\] 
\end{theorem}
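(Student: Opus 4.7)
The plan is to reduce the pairing to two independent pieces via the Helmholtz decomposition $H_1 = \rge(A_0) \oplus \rge(A_1^*)$ (which holds because $(A_0,A_1)$ is closed and exact, cf.\ \eqref{eq:od}), and then to handle each piece by a strong-times-weak duality argument using that $\mathcal{A}_0$ and $\mathcal{A}_1^*$ are topological isomorphisms (Proposition~\ref{prop:varwp}).

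First, set $q_n' \coloneqq \pi_0 q_n$, $q_n'' \coloneqq \pi_1 q_n$, $r_n' \coloneqq \pi_0 r_n$, $r_n'' \coloneqq \pi_1 r_n$, and similarly decompose the weak limits $q = q' + q''$, $r = r' + r''$. Since $\pi_0, \pi_1$ are continuous, all four primed/double-primed sequences are weakly convergent to the corresponding component of the respective limit. Because $\rge(A_0) \perp \rge(A_1^*)$, the cross terms vanish and
\[
   \langle q_n, r_n\rangle_{H_1} = \langle q_n', r_n'\rangle_{H_1} + \langle q_n'', r_n''\rangle_{H_1}.
\]

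Next, I would handle $\langle q_n', r_n'\rangle_{H_1}$. Since $\mathcal{A}_0\colon \dom(\mathcal{A}_0)\to \rge(A_0)$ is a topological isomorphism, there exist unique $w_n\in \dom(\mathcal{A}_0)$ and $w\in \dom(\mathcal{A}_0)$ with $r_n' = \mathcal{A}_0 w_n$ and $r' = \mathcal{A}_0 w$, and $w_n \rightharpoonup w$. By the defining formula of $\mathcal{A}_0^\diamond$ and the identity $\mathcal{A}_{0,\textnormal{k}}^\diamond = \mathcal{A}_0^\diamond \pi_0$, we obtain
\[
   \langle q_n', r_n'\rangle_{H_1} = \langle q_n', \mathcal{A}_0 w_n\rangle_{H_1} = (\mathcal{A}_0^\diamond q_n')(w_n) = (\mathcal{A}_{0,\textnormal{k}}^\diamond q_n)(w_n).
\]
Now $(\mathcal{A}_{0,\textnormal{k}}^\diamond q_n)_n$ is weakly convergent in $\dom(\mathcal{A}_0)^*$ (as $\mathcal{A}_{0,\textnormal{k}}^\diamond$ is bounded linear) and, by hypothesis, relatively compact there, hence strongly convergent with limit $\mathcal{A}_{0,\textnormal{k}}^\diamond q$. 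Pairing a norm-convergent sequence in $\dom(\mathcal{A}_0)^*$ with a weakly convergent one in $\dom(\mathcal{A}_0)$ is continuous, so
\[
   \langle q_n', r_n'\rangle_{H_1} \longrightarrow (\mathcal{A}_{0,\textnormal{k}}^\diamond q)(w) = \langle q', \mathcal{A}_0 w\rangle_{H_1} = \langle q', r'\rangle_{H_1}.
\]

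An entirely symmetric argument, with the roles of $A_0$ and $A_1^*$ swapped, applies to $\langle q_n'', r_n''\rangle_{H_1}$: writing $q_n'' = \mathcal{A}_1^* v_n$ with $v_n \rightharpoonup v$ in $\dom(\mathcal{A}_1^*)$ and using that $((\mathcal{A}_1^*)_{\textnormal{k}}^\diamond r_n)_n$ converges strongly in $\dom(\mathcal{A}_1^*)^*$ (weakly convergent and relatively compact) to $(\mathcal{A}_1^*)_{\textnormal{k}}^\diamond r$, one concludes $\langle q_n'', r_n''\rangle_{H_1} \to \langle q'', r''\rangle_{H_1}$. Summing the two limits yields $\langle q_n, r_n\rangle_{H_1} \to \langle q', r'\rangle + \langle q'', r''\rangle = \langle q, r\rangle_{H_1}$, as desired.

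The only real obstacle is the bookkeeping: one has to verify carefully that $\mathcal{A}_0^\diamond q_n' = \mathcal{A}_{0,\textnormal{k}}^\diamond q_n$ (which is immediate from $\mathcal{A}_{0,\textnormal{k}}^\diamond = \mathcal{A}_0^\diamond \pi_0$ and $\pi_0 = \pi_0^2$), and that weak convergence plus relative compactness in a Banach space forces norm convergence; both are routine. The conceptual content is that exactness of the complex decouples the pairing into two blocks on which $\mathcal{A}_0$ and $\mathcal{A}_1^*$ act as isomorphisms, turning a product of two weakly convergent sequences into a pairing of one strongly and one weakly convergent sequence.
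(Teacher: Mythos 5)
Your proof is correct. Note that the paper itself contains no proof of Theorem~\ref{thm:DCL} --- it is imported verbatim from \cite{W17_DCL} --- but your argument (the orthogonal decomposition $H_1=\rge(A_0)\oplus\rge(A_1^*)$ coming from exactness and closedness, inversion of the topological isomorphisms $\mathcal{A}_0$ and $\mathcal{A}_1^*$ from Proposition~\ref{prop:varwp}, upgrading ``weakly convergent and relatively compact'' to norm convergence, and then pairing a norm-convergent sequence of functionals with a weakly convergent sequence) is exactly the argument of that reference, so nothing is missing. The only cosmetic point you gloss over is a complex conjugation in the second block (since $(\mathcal{A}_1^*)^\diamond r_n''$ acts by $v\mapsto\langle r_n'',A_1^*v\rangle$ while you need $\langle \mathcal{A}_1^*v_n,r_n''\rangle$ when $H_1$ is complex), which does not affect the limit.
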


For easy reference, we will use $\pi_0$ and $\pi_1$ for the orthogonal projections in $H_1$ projecting on $\rge(A_0)$ and $\rge(A_1^*)$, respectively.

\begin{lemma}\label{lem:sys} Let $a\in \mathcal{M}(\alpha,\beta,(A_0,A_1))$. Let $v,w\in H_1$. Then the following conditions are equivalent:
\begin{enumerate}
  \item[(i)] $w=av$;
  \item[(ii)] $\pi_0 w = \pi_0 av$ and $\pi_1 v = \pi_1 a^{-1}w$.
\end{enumerate}
\end{lemma}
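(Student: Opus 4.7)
The direction (i)$\Rightarrow$(ii) is immediate. If $w = av$, then $\pi_0 w = \pi_0 a v$ by applying $\pi_0$, and since $a$ is continuously invertible (as $a \in \mathcal{M}(\alpha,\beta,(A_0,A_1))$), we have $v = a^{-1}w$, so $\pi_1 v = \pi_1 a^{-1} w$ by applying $\pi_1$.

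For (ii)$\Rightarrow$(i), I would exploit the orthogonal decomposition \eqref{eq:od}, namely $H_1 = \rge(A_0)\oplus \rge(A_1^*)$, which gives $\pi_0 + \pi_1 = 1_{H_1}$ and $\kar(\pi_0)=\rge(A_1^*)$, $\kar(\pi_1)=\rge(A_0)$. The condition $\pi_1 v = \pi_1 a^{-1}w$ is equivalent to $a^{-1}w - v \in \kar(\pi_1) = \rge(A_0)$. Setting $x \coloneqq a^{-1}w - v \in \rge(A_0)$, we obtain $w = a(v+x) = av + ax$, hence $w - av = ax$. On the other hand, the condition $\pi_0 w = \pi_0 av$ gives $w - av \in \kar(\pi_0) = \rge(A_1^*)$, so $ax \in \rge(A_1^*)$, i.e., $\pi_0 ax = 0$.

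It then remains to show that $x=0$. Since $x \in \rge(A_0)$, identifying $x$ with $\iota_{\textnormal{r},A_0}^* x$ under $\iota_{\textnormal{r},A_0}$, we have
\[
   \pi_0 a x = \iota_{\textnormal{r},A_0}\iota_{\textnormal{r},A_0}^* a \iota_{\textnormal{r},A_0}\iota_{\textnormal{r},A_0}^* x = \iota_{\textnormal{r},A_0} a_{00} \iota_{\textnormal{r},A_0}^* x = 0,
\]
which forces $a_{00}\iota_{\textnormal{r},A_0}^* x = 0$. By the defining property of $\mathcal{M}(\alpha,\beta,(A_0,A_1))$, namely $\Re a_{00} \geq \alpha 1_{A_0}$, Lemma~\ref{lem:conpd}(a) yields that $a_{00}$ is continuously invertible on $\rge(A_0)$. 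Therefore $\iota_{\textnormal{r},A_0}^* x = 0$ and hence $x = 0$, which gives $w = av$ as desired.

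The core step---where the statement is essentially \emph{used}---is the invertibility of $a_{00}$; all other parts are direct manipulations of the Helmholtz-type decomposition $\pi_0 + \pi_1 = 1$. I do not anticipate any real obstacle beyond keeping the bookkeeping between the ambient space $H_1$ and the range space $\rge(A_0)$ (via $\iota_{\textnormal{r},A_0}$) straight.
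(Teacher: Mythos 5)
Your proof is correct, and it is a genuinely different and somewhat leaner route than the paper's. The paper proves (ii)$\Rightarrow$(i) by passing to the full $2\times 2$ block representation of both $a$ and $a^{-1}$, invoking the Schur-complement formula from Lemma~\ref{lem:boinv}(b) for $(a^{-1})_{10}$ and $(a^{-1})_{11}$, and then rearranging the two scalar block equations until they reproduce $\pi_0 w = a_{00}\pi_0 v + a_{01}\pi_1 v$ and $\pi_1 w = a_{10}\pi_0 v + a_{11}\pi_1 v$, i.e., $w=av$. Your argument instead stays at the level of the subspaces $\rge(A_0)=\kar(\pi_1)$ and $\rge(A_1^*)=\kar(\pi_0)$: you encode the two hypotheses as $x\coloneqq a^{-1}w-v\in\rge(A_0)$ and $ax=w-av\in\rge(A_1^*)$, observe that these force $a_{00}\,\iota_{\textnormal{r},A_0}^*x=0$, and then use only the invertibility of $a_{00}$ (from $\Re a_{00}\geq\alpha$ via Lemma~\ref{lem:conpd}(a)) to conclude $x=0$. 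This avoids Lemma~\ref{lem:boinv} entirely and, incidentally, shows the implication holds under the weaker hypotheses that $a$ is invertible and $a_{00}$ is invertible, without needing anything about the Schur complement $a_{11}-a_{10}a_{00}^{-1}a_{01}$. Both approaches are valid; yours is the cleaner one to read and the paper's has the merit of making the explicit block formulas visible, which are re-used elsewhere (e.g., in the proofs of Theorem~\ref{thm:chH} and Lemma~\ref{lem:mat}).
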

\begin{proof}
Note that (i) trivially implies (ii). For the other implication, we use the block matrix representation worked out in Lemma \ref{lem:boinv}. Condition (ii) is equivalent to
\begin{equation}\label{eq:sys1}\begin{pmatrix} a_{00} & a_{01}\\ 0 & 0
\end{pmatrix} \begin{pmatrix} \pi_0 v \\ \pi_1 v\end{pmatrix}  = \begin{pmatrix} \pi_0w \\ 0\end{pmatrix}
\end{equation} and
\begin{multline*}
\begin{pmatrix} 0 \\ \pi_1 v\end{pmatrix} =   \begin{pmatrix} 0 & 0 \\ 0 & 1   
   \end{pmatrix}   \begin{pmatrix}
     \big(a^{-1}\big)_{00} & \big(a^{-1}\big)_{01}\\
     \big(a^{-1}\big)_{10} & \big(a^{-1}\big)_{11}
   \end{pmatrix}\begin{pmatrix} \pi_0 w \\ \pi_1 w\end{pmatrix} \\ 
   = \begin{pmatrix} 0 & 0 \\ -\big(a_{11}-a_{10}a_{00}^{-1}a_{01}\big)^{-1}a_{10}a_{00}^{-1} & \big(a_{11}-a_{10}a_{00}^{-1}a_{01}\big)^{-1}
\end{pmatrix}\begin{pmatrix} \pi_0 w \\ \pi_1 w\end{pmatrix}
\end{multline*}
This equation implies
\[
  \big(a_{11}-a_{10}a_{00}^{-1}a_{01}\big) \pi_1 v = \pi_1 w -a_{10}a_{00}^{-1} \pi_0 w.
\]
Next, from \eqref{eq:sys1}, we obtain $\pi_0 w = a_{00} \pi_0 v+ a_{01}\pi_1 v$. Hence,
\begin{align*}
   \big(a_{11}-a_{10}a_{00}^{-1}a_{01}\big) \pi_1 v  &= \pi_1 w -a_{10}a_{00}^{-1} \big(a_{00} \pi_0 v+ a_{01}\pi_1 v\big)
 \\ & = \pi_1 w -a_{10}\pi_0 v-a_{10}a_{00}^{-1} a_{01}\pi_1 v.
\end{align*}
Thus,
\[
  \pi_1 w = a_{11}\pi_1v+a_{10}\pi_0 v.
\]
This equation together with \eqref{eq:sys1} implies (i).
\end{proof}

\begin{lemma}\label{lem:mat} Let $a\in L(H_1)$ and $b\in \mathcal{M}(\alpha,\beta,(A_0,A_1))$. Then the following conditions are equivalent
\begin{enumerate}
 \item[(i)] $a=b$;
 \item[(ii)] $ b^{-1}a \pi_0 = \pi_0\text{ and } ab^{-1}\pi_1 = \pi_1.$
\end{enumerate}
\end{lemma}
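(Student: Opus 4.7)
The implication (i)$\Rightarrow$(ii) is immediate, since if $a=b$ then both $b^{-1}a$ and $ab^{-1}$ equal the identity on $H_1$, so they act as the identity on $\rge(\pi_0)$ and $\rge(\pi_1)$, respectively. The real content is (ii)$\Rightarrow$(i).

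For this direction the plan is to reformulate the two identities equivalently as
\[
a\pi_0 = b\pi_0, \qquad ab^{-1}\pi_1 = \pi_1,
\]
and then exploit the block-matrix representation of $a$ and $b$ associated with the decomposition $H_1 = \rge(A_0) \oplus \rge(A_1^*)$. In this representation $\pi_0$ and $\pi_1$ are the two diagonal coordinate projections, so the first identity $a\pi_0 = b\pi_0$ reads off the first block column and yields at once
\[
a_{00} = b_{00}, \qquad a_{10} = b_{10}.
\]

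For the second identity I would plug in the explicit formula for $b^{-1}$ supplied by Lemma~\ref{lem:boinv}(b). Writing $s \coloneqq b_{11}-b_{10}b_{00}^{-1}b_{01}$ (invertible on $\rge(A_1^*)$ since $b\in \mathcal{M}(\alpha,\beta,(A_0,A_1))$), one has
\[
b^{-1}\pi_1 = \begin{pmatrix} -b_{00}^{-1}b_{01}\,s^{-1} \\ s^{-1} \end{pmatrix}\pi_1,
\]
and then using the already established equalities $a_{00}=b_{00}$, $a_{10}=b_{10}$ a direct computation gives
\[
ab^{-1}\pi_1 = \begin{pmatrix} (a_{01}-b_{01})\,s^{-1} \\ a_{11}s^{-1} - b_{10}b_{00}^{-1}b_{01}\,s^{-1} \end{pmatrix}\pi_1.
\]
Equating this to $\pi_1 = \operatorname{diag}(0,1)$ on $H_1$ and using that $s^{-1}$ maps $\rge(A_1^*)$ onto itself, the top block forces $a_{01}=b_{01}$ and the bottom block then forces $a_{11}s^{-1} = (s + b_{10}b_{00}^{-1}b_{01})s^{-1} = b_{11}s^{-1}$, i.e.\ $a_{11}=b_{11}$. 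All four blocks of $a$ and $b$ agree, so $a=b$.

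I do not foresee a real obstacle; the argument is a bookkeeping exercise with the 2-by-2 block form, the only non-trivial input being the explicit block inverse of Lemma~\ref{lem:boinv}(b) and the fact that the Schur complement $b_{11}-b_{10}b_{00}^{-1}b_{01}$ is continuously invertible on $\rge(A_1^*)$, which is precisely what membership in $\mathcal{M}(\alpha,\beta,(A_0,A_1))$ guarantees.
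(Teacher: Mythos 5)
Your proof is correct and follows essentially the same route as the paper: establish $a\pi_0 = b\pi_0$ to read off the first block column, then substitute the explicit block inverse from Lemma~\ref{lem:boinv}(b) into $ab^{-1}\pi_1=\pi_1$ to recover the second block column. The only cosmetic difference is that you name the Schur complement $s$ up front, whereas the paper carries $(b^{-1})_{11}$ symbolically and multiplies by its inverse at the end; the bookkeeping and the use of $\mathcal{M}(\alpha,\beta,(A_0,A_1))$ to guarantee the needed invertibilities are the same.
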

\begin{proof}
The implication (i)$\Rightarrow$(ii) is evidently true. Thus, we assume (ii) to hold. We aim for showing $a=b$. For this, we note that $b^{-1}a\pi_0 =\pi_0$ implies $a\pi_0 =b\pi_0$. Thus, using the block matrix representation from Section~\ref{sec:block}, we infer 
\[
   \begin{pmatrix} a_{00} & a_{01} \\ a_{10} & a_{11} \end{pmatrix}  \begin{pmatrix} 1 & 0\\ 0 & 0 \end{pmatrix} =    \begin{pmatrix} b_{00} & b_{01} \\ b_{10} & b_{11} \end{pmatrix}  \begin{pmatrix} 1 & 0\\ 0 & 0 \end{pmatrix},
\]
which implies 
\begin{equation}\label{eq:pi0a}
    a_{00} = b_{00} \text{ and } a_{10} = b_{10}.
\end{equation}
Next, from $ab^{-1}\pi_1 =\pi_1$, we obtain
\begin{align*}
   \begin{pmatrix} 0 & 0 \\ 0 & 1 \end{pmatrix} & =  \begin{pmatrix} a_{00} & a_{01} \\ a_{10} & a_{11} \end{pmatrix}
   \begin{pmatrix} (b^{-1})_{00} & (b^{-1})_{01} \\ (b^{-1})_{10} & (b^{-1})_{11} \end{pmatrix} \begin{pmatrix} 0 & 0 \\ 0 & 1 \end{pmatrix} 
   \\ & =  \begin{pmatrix} a_{00} & a_{01} \\ a_{10} & a_{11} \end{pmatrix}
   \begin{pmatrix} 0 & (b^{-1})_{01} \\ 0 & (b^{-1})_{11} \end{pmatrix} 
   \\ & = \begin{pmatrix} 0 & a_{00}(b^{-1})_{01}+ a_{01}(b^{-1})_{11} \\ 0 &a_{10}(b^{-1})_{01}+ a_{11}(b^{-1})_{11} \end{pmatrix} 
\end{align*}
Thus, using \eqref{eq:pi0a}, we infer
\begin{align*}
   - b_{00} (b^{-1})_{01} &= a_{01}(b^{-1})_{11} \\
     1 - b_{10} (b^{-1})_{01} &= a_{11}(b^{-1})_{11}.
\end{align*}
Multiplying both equations by $(b^{-1})_{11}^{-1}$ from the right and using the expressions stated in Lemma \ref{lem:boinv}(b), we obtain
\begin{equation}\label{eq:pi1a}
   a_{01} = - b_{00}(b^{-1})_{01}(b^{-1})_{11}^{-1} = - b_{00}\big(-b_{00}^{-1}b_{01}(b^{-1})_{11}\big)(b^{-1})_{11}^{-1}= b_{01}
\end{equation}
and, similarly,
\begin{equation}\label{eq:pi1b}
  a_{11} = (b^{-1})_{11}^{-1} - b_{10} (b^{-1})_{01}(b^{-1})_{11}^{-1} = b_{11}-b_{10}b_{00}^{-1}b_{01} + b_{10}b_{00}^{-1}b_{01} = b_{11}.
\end{equation}
Thus, the equations \eqref{eq:pi0a} together with \eqref{eq:pi1a} and \eqref{eq:pi1b} imply $a=b$ and, hence, the assertion.
\end{proof}
We like to point out that in the implication `(ii)$\Rightarrow$(i)' of Lemma \ref{lem:mat}, the invertibility of $a$ is implied rather than assumed.

We may now present the proof of Theorem \ref{thm:chH3}. We note that the implication `(i)$\Rightarrow$(ii)' should be seen as an abstract implementation of Tartar's method of oscillating test functions.

\begin{proof}[Proof of Theorem \ref{thm:chH3}] We shall assume that $a\in \mathcal{M}(\alpha,\beta,(A_0,A_1))$ and that $(a_n)_n$ nonlocally $H$-converges to $a$ and let $(q_n)_n$ and $q$ be as in (ii). By Theorem~\ref{thm:chH}, we shall assume without loss of generality that $\kappa(n)=n$ since any subsequence of $(a_n)_n$ also nonlocally $H$-converges to $a$. By Corollary \ref{cor:chH1}, $(a_n^*)_n$ nonlocally $H$-converges to $a^*$. Let $v\in \dom(\mathcal{A}_0)$ and define $v_n$ to be the solution of 
\[
    \langle a_n^*A_0 v_n, A_0 \phi\rangle = f(\phi) \quad(\phi\in \dom(\mathcal{A}_0)),
\]
where $f\in \dom(\mathcal{A}_0)^*$ is given by
\[
    f(\phi) = \langle a^*\mathcal{A}_0 v,\mathcal{A}_0\phi\rangle\quad (\phi\in \dom(\mathcal{A}_0)).
\]
Since $(a_n^*)_n$ nonlocally $H$-converges to $a^*$, we obtain that $(v_n)_n$ weakly converges to some $w\in\dom(\mathcal{A}_0)$ satisfying
\[
    \langle a^*A_0 w, A_0 \phi\rangle = \langle a^*\mathcal{A}_0 v,\mathcal{A}_0\phi\rangle\quad (\phi\in \dom(\mathcal{A}_0)),
\]
which, by Theorem \ref{thm:varwp}, leads to
\[
   w = \mathcal{A}_0^{-1}(a^*)_{00}^{-1}(\mathcal{A}_0^\diamond)^{-1}f=\mathcal{A}_0^{-1}(a^*)_{00}^{-1}(\mathcal{A}_0^\diamond)^{-1}(\mathcal{A}_0^\diamond)(a^*)_{00}\mathcal{A}_0v = v.
\]
Moreover, by nonlocal $H$-convergence, we deduce $a_n^*A_0 v_n \rightharpoonup a^*A_0v$ in $H_1$ as $n\to\infty$. We note, in particular, that $\mathcal{A}_{0,\textnormal{k}}^\diamond (a_n^*A_0 v_n)=f$ and $(\mathcal{A}_1^*)_{\textnormal{k}}^\diamond A_0 v_n =0$, by the complex property. For the latter note that $\kar((\mathcal{A}_1^*)_{\textnormal{k}}^\diamond)=\kar(A_1)$. Without loss of generality, we may assume that $(a_nq_n)_n$ weakly converges to some $r\in H_1$. For $n\in\mathbb{N}$ we have
\begin{equation}\label{eq:dcln}
   \langle a_n q_n, \mathcal{A}_0 v_n\rangle =    \langle q_n, a_n^*\mathcal{A}_0 v_n\rangle.
\end{equation}
Using Theorem \ref{thm:DCL} together with the assumptions (a) and (b) imposed on $q$, we infer from equation \eqref{eq:dcln} by letting $n\to\infty$
\[
   \langle r, \mathcal{A}_0 v \rangle = \langle q, a^*\mathcal{A}_0 v\rangle = \langle aq,\mathcal{A}_0 v\rangle.
\]
Since $v\in \dom(\mathcal{A}_0)$ can be chosen arbitrarily, we obtain
\begin{equation}\label{eq:pi0}
    \pi_0 r = \pi_0 aq,
\end{equation}
where $\pi_0$ is the orthogonal projection on $\rge(A_0)$.

Next, let $s\in \dom(\mathcal{A}_1^*)$. Let $(s_n)_n$ be the sequence in $\dom(\mathcal{A}_1)$ satisfying
\[
   \langle (a_n^{-1})^* A_1^* s_n, A_1^* \psi \rangle =    \langle (a^{-1})^* A_1^* s, A_1^* \psi \rangle
\]
By the nonlocal $H$-convergence of $(a_n^*)_n$ to $a^*$ it follows (invoking Theorem~\ref{thm:varwp} again) that
\[
   s_n \rightharpoonup s\in \dom(\mathcal{A}_0), \text{ and }(a_n^{-1})^* A_1^* s_n\rightharpoonup (a^{-1})^*A_1^*s.
\]
Moreover, we have that
\[
 (  \mathcal{A}_1^*)_{\textnormal{k}}^\diamond (a_n^{-1})^* A_1^* s_n =  (  \mathcal{A}_1^*)_{\textnormal{k}}^\diamond(a^{-1})^* A_1^* s
\]
as well as 
\[
  \mathcal{A}_{0,\textnormal{k}}^\diamond A_1^* s_n = 0.
\]
Next, for $n\in\mathbb{N}$, we have
\[
   \langle q_n, A_1^*s_n\rangle = \langle a_nq_n, (a_n^{-1})^*A_1^*s_n\rangle.
\]
By Theorem~\ref{thm:DCL} together with the assumptions on $q_n$, we may let $n\to\infty$ and obtain
\[
      \langle q, A_1^*s\rangle = \langle r, (a^{-1})^*A_1^*s\rangle.
\]
As $s\in \dom(\mathcal{A}_1^*)$ was arbitrary, this yields 
\begin{equation}\label{eq:pi1}
   \pi_1 q = \pi_1 a^{-1}r,
\end{equation}
where $\pi_1$ is the orthogonal projection onto $\rge(A_1^*)$.
Applying Lemma~\ref{lem:sys} to $w=r$ and $v=q$, we obtain $aq=r$.

We shall now assume that (ii) holds. By Theorem~\ref{thm:Hcom}, we may choose a $\kappa\colon\mathbb{N}\to\mathbb{N}$ strictly monotone such that of $(a_{\kappa(n)})_n$ nonlocally $H$-converges to some $b\in \mathcal{M}(\alpha,\beta,(A_0,A_1))$. Next, let $f\in \dom(\mathcal{A}_0)^*$ and $g\in \dom(\mathcal{A}_1^*)^*$ and let $(u_n)_n$ as well as $(v_n)_n$ satisfy 
\[
   \langle a_{\kappa(n)} A_0 u_n, A_0 \phi\rangle = f(\phi),\quad    \langle a_{\kappa(n)}^{-1} A_1^* v_n, A_1^* \psi\rangle = g(\psi),
\]for all $\phi\in\dom(\mathcal{A}_0)$ and $\psi\in \dom(\mathcal{A}_1^*)$. By nonlocal $H$-convergence, we obtain
\begin{align*}
   u_n\rightharpoonup u\in \dom(\mathcal{A}_0), &\ a_{\kappa(n)} A_0 u_n \rightharpoonup b A_0 u\\
   v_n\rightharpoonup v\in \dom(\mathcal{A}_1^*), &\ a_{\kappa(n)}^{-1} A_0 v_n \rightharpoonup b^{-1} A_0 v,
\end{align*}
where $u$ and $v$ satisfy
\[
   \langle b A_0 u, A_0 \phi\rangle =  f(\phi),\quad    \langle b^{-1} A_1^* v, A_1^* \psi\rangle =  g(\psi),
\]for all $\phi\in\dom(\mathcal{A}_0)$ and $\psi\in \dom(\mathcal{A}_1^*)$. We observe that 
\begin{align*}
 &  \mathcal{A}_{0,\textnormal{k}}^\diamond (a_{\kappa(n)} A_0 u_n)=f, \\
 & (\mathcal{A}_1^*)_{\textnormal{k}}^\diamond A_0 u_n = 0, \\
 &  \mathcal{A}_{0,\textnormal{k}}^\diamond A_1^* v_n  = 0,\\
 &  (\mathcal{A}_1^*)_{\textnormal{k}}^\diamond a_{\kappa(n)}^{-1} A_1^* v_n =g.
\end{align*}
Hence, by the assumption applied to $q_n = A_0 u_n$ or $q_n = a_{\kappa(n)}^{-1} A_1^* v_n$, we obtain
\[
   a_{\kappa(n)} A_0 u_n \rightharpoonup a A_0 u,\text{ and } a_{\kappa(n)}a_{\kappa(n)}^{-1} A_1^* v_n\rightharpoonup a b^{-1}A_1^*v.
\]
Thus, $b^{-1}a A_0 u = A_0 u$ and $A_1^*v = ab^{-1} A_1^* v$. As $f$ and $g$ are arbitrary, as in the proof of `(i)$\Rightarrow$(ii)' we infer that $u\in \dom(\mathcal{A}_0)$ and $v\in \dom(\mathcal{A}_1^*)$ are arbitrary, as well. Hence,
\[
   b^{-1}a \pi_0 = \pi_0\text{ and } ab^{-1}\pi_1 = \pi_1.
\]By Lemma~\ref{lem:mat}, we obtain $a=b$. The subsequence principle concludes the proof.
\end{proof}

\section{An application to Maxwell's equations}\label{sec:max}

In this section, we shall consider a homogenisation problem for Maxwell's equations. In contrast to many other discussions of homogenisation problems for the Maxwell system, we shall treat the full 3-dimensional time-dependent problem. Moreover, the setting is arranged in a way that we may allow for the homogenisation of highly oscillatory mixed type problems, where several regions of the underlying material are considered to have no dielectricity at all. That is to say, at certain regions of the underlying domain, one may or may not use the eddy current approximation. This goes well beyond the available results in the literature. 

Equations having highly oscillatory change of type have also been analysed in \cite{W16_SH,FW17_1D,CW17_1D}. In these references, however, the attention is restricted to $1+1$-dimensional model examples.

For other treatments of the homogenisation of the full time-dependent 3D-Maxwell's equations we refer to \cite{Wellander2001} and \cite{Barbatis2003}. In these references, the coefficients are assumed to be periodic. We shall furthermore refer to \cite{Sjoeberg2005,CW17_FH}, where the periodicity of the problem is exploited with the help of the Floquet--Bloch or Gelfand transformation. In particular, we refer to the seminal work \cite{Suslina2008} and the references therein.

In an open set $\Omega\subseteq \mathbb{R}^3$, Maxwell's equations are formulated as follows. Find $E,H\colon \mathbb{R}\times\Omega \to \mathbb{R}^3$ for a given $J\colon \mathbb{R}\times \Omega\to \mathbb{R}^3$ such that
\begin{align*}
   & \partial_t \epsilon E   + \sigma E - \curl H = J\\
   & \partial_t \mu B +\interior{\curl} E = 0,
\end{align*}
where for simplicity, we assume zero initial conditions. Moreover, $\epsilon,\mu,\sigma\in L(L^2(\Omega)^3)$ (dielectricity, permeability, conductivity) are given bounded linear operators with $\epsilon,\mu$ being selfadjoint.

In the Hilbert space framework, we shall apply next, we will favour the following block-operator-matrix form
\begin{equation}\label{eq:max}
   \Big(\partial_t \begin{pmatrix}
     \epsilon & 0 \\ 0 & \mu
   \end{pmatrix} + \begin{pmatrix}
     \sigma & 0 \\ 0 & 0
   \end{pmatrix} + \begin{pmatrix}
     0 & -\curl \\ \interior{\curl} & 0
   \end{pmatrix}\Big)\begin{pmatrix} E \\ H \end{pmatrix}= \begin{pmatrix} J \\ 0 \end{pmatrix}.
\end{equation}
Before turning to a homogenisation result for Maxwell's equations (see in particular Example~\ref{ex:max3} below), we shall shortly recall the well-posedness result, which will be used in the following. 
For a Hilbert space $H$ and $\nu>0$ we define 
\[
L_\nu^2(\mathbb{R};H)\coloneqq \{ f\in L_{\textnormal{loc}}^2(\mathbb{R};H); \int_{\mathbb{R}} \| f(t)\|_H^2 \exp(-2\nu t) dt<\infty\}.
\]
We recall from \cite[Corollary 2.5]{KPSTW14_OD} that the Fourier--Laplace transformation
\[
   \mathcal{L}_\nu \phi (\xi)\coloneqq \frac{1}{\sqrt{2\pi}} \int_\mathbb{R} \phi(t)\exp(-it\xi-\nu t)dt\quad (\phi\in C_c(\mathbb{R};H))
\]
can be extended unitarily as an operator from $L_\nu^2(\mathbb{R};H)$ onto $L^2(\mathbb{R};H)$. Moreover, we have that the weak derivative $\partial_t$ realised as an operator with maximal domain in $L_\nu^2(\mathbb{R};H)$ enjoys the spectral representation
\[
  \partial_t = \mathcal{L}_\nu^* (im+\nu)\mathcal{L}_\nu,
\]
where $im+\nu$ is the multiplication operator of multiplying by $x\mapsto ix+\nu$ with maximal domain. We denote for $\mu\geq 0$
\[
   \mathcal{H}^\infty(\mathbb{C}_{\Re>\mu}; L(H))\coloneqq \{ M\colon \mathbb{C}_{\Re>\mu}\to L(H); M \text{ analytic and bounded}\}. 
\] For the well-posedness of Maxwell's equations we shall employ the following theorem.

\begin{theorem}[{{\cite[Solution Theory]{PicPhy}}}]\label{thm:st} Let $c>0$, $\mu\geq 0$, $M \in \mathcal{H}^\infty( \mathbb{C}_{\Re>\mu};L(H))$, $\nu>\mu$. Assume that
\[
    \Re \lambda M(\lambda) \geq c\quad(\lambda\in \mathbb{C}_{\Re>\mu}).
\]
Let $A$ be a skew-self-adjoint operator in $H$.
Then the operator
\[
  \mathcal{B}\coloneqq \overline{\partial_t M(\partial_t)+A}\coloneqq \mathcal{L}_\nu^* \big((im+\nu)M(im+\nu)+A\big)\mathcal{L}_\nu,
\]
where $(im+\nu)M(im+\nu)+A\big)$ is the (abstract) multiplication operator of multiplying by $x\mapsto (ix+\nu)M(ix+\nu)+A$, is continuously invertible in $L_\nu^2(\mathbb{R};H)$; we have $\|\mathcal{B}^{-1}\|\leq 1/c$.
\end{theorem}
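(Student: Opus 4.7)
The plan is to transfer the problem to the spectral (Fourier--Laplace) side where $\partial_t$ becomes the multiplication operator $(im+\nu)$, and then to invert fibre-wise in the spectral parameter $\xi\in\mathbb{R}$. Since $\mathcal{L}_\nu \colon L^2_\nu(\mathbb{R};H)\to L^2(\mathbb{R};H)$ is unitary, it suffices to prove that the operator
\[
   T \coloneqq (im+\nu)M(im+\nu)+A
\]
on $L^2(\mathbb{R};H)$ is continuously invertible with $\|T^{-1}\|\leq 1/c$; conjugating by $\mathcal{L}_\nu$ then yields the claim for $\mathcal{B}$.

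For each fixed $\xi\in\mathbb{R}$, set $T_0(\xi)\coloneqq (i\xi+\nu)M(i\xi+\nu)\in L(H)$; this is bounded because $M$ is uniformly bounded on the half-plane $\mathbb{C}_{\Re>\mu}$ and the argument $i\xi+\nu$ lies there. The key positivity assumption gives
\[
   \Re \langle T_0(\xi)\phi,\phi\rangle \geq c\langle \phi,\phi\rangle\qquad (\phi\in H,\ \xi\in\mathbb{R}).
\]
Defining the fibre operator $T(\xi)\coloneqq T_0(\xi)+A$ on $\dom(A)\subseteq H$, skew-self-adjointness of $A$ yields $\Re\langle A\phi,\phi\rangle=0$ for all $\phi\in\dom(A)$, so
\[
    \Re\langle T(\xi)\phi,\phi\rangle\geq c\|\phi\|^2\qquad (\phi\in\dom(A)).
\]
The very same estimate holds for $T(\xi)^*=T_0(\xi)^*-A$, since the adjoint of the bounded operator $T_0(\xi)$ has the same real part. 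By Lemma~\ref{lem:conpd}(a) applied on the level of numerical ranges (or a direct Lax--Milgram argument), $T(\xi)$ is then injective with closed range; the corresponding statement for $T(\xi)^*$ yields density of the range of $T(\xi)$. Hence each $T(\xi)$ is a bijection $\dom(A)\to H$ with $\|T(\xi)^{-1}\|\leq 1/c$, uniformly in $\xi$.

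It remains to assemble these fibre inverses into a bounded inverse of $T$ on $L^2(\mathbb{R};H)$. Analyticity of $M$ on $\mathbb{C}_{\Re>\mu}$ makes $\xi\mapsto T_0(\xi)$ continuous in the norm topology, and hence $\xi\mapsto T(\xi)^{-1}$ is strongly continuous on $H$; in particular it is strongly measurable and uniformly bounded by $1/c$. The associated pointwise-multiplication operator $S$ on $L^2(\mathbb{R};H)$, given by $(Sg)(\xi)\coloneqq T(\xi)^{-1}g(\xi)$, therefore satisfies $\|S\|\leq 1/c$. A direct verification on the dense subspace of $L^2(\mathbb{R};\dom(A))\cap\dom((im+\nu)M(im+\nu))$ shows $TSg=g$ and $STf=f$ there; taking closures (in the sense of $\mathcal{B}=\overline{\partial_t M(\partial_t)+A}$ transported to the spectral side) gives $T^{-1}=S$ on all of $L^2(\mathbb{R};H)$.

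The main obstacle I anticipate is the careful book-keeping with domains: both $(im+\nu)M(im+\nu)$ and $A$ (the latter now viewed fibre-wise on $L^2(\mathbb{R};H)$) are unbounded, so one has to identify a sufficiently large common core on which $T$ agrees with its closure and on which the inverse $S$ produced above actually lands. Once the measurability and uniform boundedness of the fibre inverses are established, the norm bound $1/c$ is immediate, and the remaining work is purely a density and closure argument.
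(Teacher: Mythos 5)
The paper does not prove this theorem; it is cited from Picard's original work \cite{PicPhy}, so there is no in‑paper proof to compare against. That said, your sketch reproduces the standard argument from Picard's solution theory accurately: pass to the Fourier--Laplace side via the unitary $\mathcal{L}_\nu$, reduce to the multiplication operator $T$, observe that the numerical range of each fibre $T(\xi)=(i\xi+\nu)M(i\xi+\nu)+A$ lies in $\{\Re z\ge c\}$ because the bounded part has $\Re\ge c$ and the skew-self-adjoint $A$ contributes nothing to the real part, apply the same to $T(\xi)^*=T_0(\xi)^*-A$ to get surjectivity and the uniform bound $\|T(\xi)^{-1}\|\le 1/c$, then assemble via measurability.

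Two small refinements are worth noting. First, you claim strong continuity of $\xi\mapsto T(\xi)^{-1}$, but your own resolvent identity $T(\xi)^{-1}-T(\eta)^{-1}=T(\xi)^{-1}(T_0(\eta)-T_0(\xi))T(\eta)^{-1}$ combined with $\|T(\cdot)^{-1}\|\le 1/c$ and the norm-continuity of $T_0$ on the line $\Re\lambda=\nu$ actually gives \emph{norm}-continuity, which makes measurability of the fibre inverses immediate without any further argument. Second, the domain/closure concern you flag at the end is in fact already built into the statement: the paper \emph{defines} $\mathcal{B}$ (conjugated by $\mathcal{L}_\nu$) as the abstract multiplication operator associated with the operator-valued symbol $\xi\mapsto(i\xi+\nu)M(i\xi+\nu)+A$, i.e.\ with domain $\{f\in L^2(\R;H);\ f(\xi)\in\dom(A)\text{ a.e.},\ T(\cdot)f(\cdot)\in L^2\}$; this multiplication operator is already closed. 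On that domain your $S$, given by $(Sg)(\xi)=T(\xi)^{-1}g(\xi)$, lands in $\dom(T)$ by construction and satisfies $TS=I$, $ST=I$ directly, so no separate core-identification or closure step is needed. With those two clarifications your argument is complete and coincides with the route the cited reference takes.
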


Recall from the spectral representation for $\partial_t$ that for $\nu>0$, the operator $\partial_t$ is continuously invertible.  

\begin{remark}
Theorem \ref{thm:st} applies to \eqref{eq:max} with the setting
\[
    H=L^2(\Omega)^3\oplus L^2(\Omega)^3,\ M(\partial_t) =   \begin{pmatrix}
     \epsilon & 0 \\ 0 & \mu
   \end{pmatrix} + \partial_t^{-1}\begin{pmatrix}
     \sigma & 0 \\ 0 & 0
   \end{pmatrix},\ A= \begin{pmatrix}
     0 & -\curl \\ \interior{\curl} & 0
   \end{pmatrix}.
\]
Note that the positive definiteness requirement translates into
\[
    \Re \lambda M(\lambda)  = \Re \lambda \left(  \begin{pmatrix}
     \epsilon & 0 \\ 0 & \mu
   \end{pmatrix} + \frac1\lambda\begin{pmatrix}
     \sigma & 0 \\ 0 & 0
   \end{pmatrix}\right) =  \Re \left(\lambda   \begin{pmatrix}
     \epsilon & 0 \\ 0 & \mu
   \end{pmatrix} + \begin{pmatrix}
     \sigma & 0 \\ 0 & 0
   \end{pmatrix}\right)\geq c,
\] for some $c>0$ and all $\lambda$ with $\Re\lambda$ large enough. Thus,
\[
 \Re \lambda M(\lambda) \geq c \iff \big(\lambda \epsilon+\Re \sigma\geq c\big)\, \land\, \big(\mu\geq c\big).
\]
\end{remark}

For the main homogenisation theorem we shall apply to Maxwell's equations, we will need the following construction principle for complexes.

\begin{proposition}\label{prop:blockcom} Let $B_0\colon \dom(B_0)\subseteq K_0\to K_1$, $B_1\colon \dom(B_1)\subseteq K_1\to K_2$, $B_2\colon \dom(B_2)\subseteq K_2\to K_3$ be densely defined and closed linear operators acting in the Hilbert spaces $K_0$, $K_1$, $K_2$, and $K_3$. Assume that $(B_0,B_1)$, $(B_1,B_2)$ are compact and exact. Define 
$(A_0,A_1)\coloneqq\left(\left(\begin{smallmatrix}0 & B_2^* \\ B_0 & 0\end{smallmatrix}\right),\left(\begin{smallmatrix}0 & B_1 \\ -B_1^* & 0\end{smallmatrix}\right)\right)$ with $\dom(A_0)=\dom(B_0)\oplus \dom(B_2^*)$ and $\dom(A_1)=\dom(B_1^*)\oplus \dom(B_1)$ with $H_0= K_0\oplus K_3$, $H_1=H_2=K_2\oplus K_1$.

Then $(A_0,A_1)$ is compact and exact. 
\end{proposition}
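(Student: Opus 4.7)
The plan is to verify the four properties---complex, closed, exact, compact---in turn, exploiting the block anti-diagonal structure of $A_0$ and $A_1$. For the complex property, I would observe that for $(x,y)\in \dom(A_0)=\dom(B_0)\oplus\dom(B_2^*)$ one has $A_0(x,y)=(B_2^*y, B_0x)$. Since $(B_0,B_1)$ is a complex, $B_0x\in\rge(B_0)\subseteq \kar(B_1)$, so $B_0x\in\dom(B_1)$ and $B_1B_0x=0$. Likewise, Proposition~\ref{prop:comelm}(a) applied to $(B_1,B_2)$ shows $(B_2^*,B_1^*)$ is a complex, so $B_2^*y\in\kar(B_1^*)\subseteq\dom(B_1^*)$ and $B_1^*B_2^*y=0$. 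Therefore $A_1A_0(x,y)=(B_1B_0x,-B_1^*B_2^*y)=0$, that is, $\rge(A_0)\subseteq\kar(A_1)$.

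Next, I would identify $A_0^*$. A direct calculation using the definition of the Hilbert space adjoint on block operators gives $A_0^*=\bigl(\begin{smallmatrix}0 & B_0^*\\ B_2 & 0\end{smallmatrix}\bigr)$ with $\dom(A_0^*)=\dom(B_2)\oplus\dom(B_0^*)$. For compactness of $(A_0,A_1)$, I then observe that $\dom(A_0^*)\cap\dom(A_1)=\bigl(\dom(B_2)\cap\dom(B_1^*)\bigr)\oplus\bigl(\dom(B_0^*)\cap\dom(B_1)\bigr)$, and each summand embeds compactly into the respective Hilbert space by the hypothesized compactness of $(B_1,B_2)$ and $(B_0,B_1)$. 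A direct sum of two compact embeddings is compact, so $(A_0,A_1)$ is compact; closedness follows automatically by Proposition~\ref{prop:comelm}(e).

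Finally, for exactness, I would compute $\kar(A_1)=\kar(B_1^*)\oplus\kar(B_1)$ and $\rge(A_0)=\rge(B_2^*)\oplus\rge(B_0)$ directly from the block form. Exactness of $(B_0,B_1)$ gives $\kar(B_1)=\rge(B_0)$. Closedness of $(B_1,B_2)$ (from Proposition~\ref{prop:comelm}(e)) combined with its exactness, via Proposition~\ref{prop:comelm}(c), yields that $(B_2^*,B_1^*)$ is exact, whence $\kar(B_1^*)=\rge(B_2^*)$. Combining these identifications gives $\kar(A_1)=\rge(A_0)$, completing the proof.

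The argument is essentially mechanical bookkeeping, and no single step presents a serious obstacle. The only point that requires attention is the adjoint calculation in the block structure, in particular checking that $B_2^*y$ actually lands in $\dom(B_1^*)$ when verifying the complex property; this relies on Proposition~\ref{prop:comelm}(a) rather than on a direct domain inclusion, and is the reason the dualization results of the previous section are precisely what is needed here.
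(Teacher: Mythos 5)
Your proof is correct and follows essentially the same route as the paper: identify $A_0^*$ in block form, use compactness of $(B_0,B_1)$ and $(B_2^*,B_1^*)$ (the latter via Proposition~\ref{prop:comelm}) to get compactness of the intersection of domains, and use exactness of $(B_0,B_1)$ together with $(B_2^*,B_1^*)$ to identify $\rge(A_0)$ with $\kar(A_1)$. The only cosmetic difference is that you explicitly verify the complex property before exactness, whereas the paper derives exactness directly, from which the complex property follows.
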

\begin{proof} We frequently use Proposition \ref{prop:comelm} in the following.
Since $(B_1,B_2)$ is compact, $(B_1,B_2)$ is closed. As $(B_1,B_2)$ is also exact, we obtain that $(B_2^*,B_1^*)$ is exact, as well. Hence,
\[
   \rge(A_0)=\rge(B_2^*)\oplus \rge(B_0) = \kar(B_1^*)\oplus \kar(B_1) = \kar(A_1),
\]
which shows that $(A_0,A_1)$ is exact. We are left with showing that $(A_0,A_1)$ is compact. For this, we realise that
$
    \dom(A_0^*)= \dom(B_2)\oplus \dom(B_0^*).
$
Hence,
\[
   \dom(A_0^*)\cap \dom(A_1) = \big(\dom(B_2)\cap \dom(B_1^*)\big)\oplus \big(\dom(B_0^*)\cap \dom(B_1)\big)
\]
Since $(B_1,B_2)$ is compact, so is $(B_2^*,B_1^*)$. Hence, $\big(\dom(B_2)\cap \dom(B_1^*)\big)\hookrightarrow K_2$ compactly. The compactness of $(B_0,B_1)$, thus, implies that $ \dom(A_0^*)\cap \dom(A_1)\hookrightarrow H_1=K_2\oplus K_1$ compactly, that is, $(A_0,A_1)$ is compact.
\end{proof}

\begin{example}\label{ex:max0} Let $\Omega\subseteq \mathbb{R}^3$ be an open bounded simply connected weak Lipschitz domain with connected complement. Then the typical situation for Maxwell's equations for applying Proposition \ref{prop:blockcom} is as follows: $B_0 = \grad$, $B_1=\curl$, and $B_2=\dive$ with $\dom(\grad)=H^1(\Omega)$, $\dom(\curl)=H(\curl,\Omega)$ and $\dom(\dive)=H(\dive,\Omega)$, $K_0 = L^2(\Omega)$, $K_1=L^2(\Omega)^3$, $K_2=L^2(\Omega)^3$, and $K_3=L^2(\Omega)$. The assumptions on $\Omega$ render $(\grad,\curl)$ and $(\curl,\dive)$ exact and compact. Indeed, the compactness of the complexes follows from Example \ref{ex:diffo} (a2) and (b2). Thus, Proposition~\ref{prop:comelm}(e) implies closedness of the complexes. Next, by Example \ref{ex:dfnf}, $(\grad,\curl)$ is exact as ${\Omega}$ is simply connected. Moreover, $(\curl,\dive)$ is exact, if and only if $(\interior{\grad},\interior{\curl})$ is exact, by Proposition \ref{prop:comelm} and the closedness of $(\curl,\dive)$. By Example~\ref{ex:dfnf}, $(\curl,\dive)$ is, thus, exact since $\mathbb{R}^3\setminus\Omega$ is connected.
\end{example}

\begin{theorem}[Homogenisation theorem]\label{thm:hommax} Let $B_0\colon \dom(B_0)\subseteq K_0\to K_1$, $B_1\colon \dom(B_1)\subseteq K_1\to K_2$, $B_2\colon \dom(B_2)\subseteq K_2\to K_3$ be densely defined and closed linear operators acting in the Hilbert spaces $K_0$, $K_1$, $K_2$, and $K_3$. Assume that $(B_0,B_1)$, $(B_1,B_2)$ are compact and exact. Define $H\coloneqq K_2\oplus K_1$. Let $(M_n)_n$ in $\mathcal{H}^\infty(\mathbb{C}_{\Re>\mu};L(H))$ be bounded and let $M\in \mathcal{H}^\infty(\mathbb{C}_{\Re>\mu};L(H))$ for some $\mu\geq 0$.. Assume
\[
    \Re \lambda M_n(\lambda)\geq c \quad (\lambda \in \mathbb{C}_{\Re>\mu})
\]
as well as for all $\lambda \in \mathbb{R}_{>\mu}$
\[
    M_n (\lambda)\to M(\lambda)
\]
$H$-nonlocally with respect to $\left(\left(\begin{smallmatrix}0 & B_2^* \\ B_0 & 0\end{smallmatrix}\right),\left(\begin{smallmatrix}0 & B_1 \\ -B_1^* & 0\end{smallmatrix}\right)\right)$ as $n\to\infty$.

Then
\[
    \overline{\left(\partial_t M_n(\partial_t)+\left(\begin{smallmatrix}0 & B_1 \\ -B_1^* & 0\end{smallmatrix}\right)\right)}^{-1} \to     \overline{\left(\partial_t M(\partial_t)+\left(\begin{smallmatrix}0 & B_1 \\ -B_1^* & 0\end{smallmatrix}\right)\right)}^{-1}
\]
in the weak operator topology of $L(L_\nu^2(\mathbb{R};H))$ for all $\nu>\mu$.
\end{theorem}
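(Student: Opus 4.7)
The plan is to apply the Fourier--Laplace transform to reduce the claimed weak operator convergence of the time-dependent inverses to a pointwise statement for the operator-valued symbols, then to handle the latter at real arguments via the div-curl characterisation (Theorem~\ref{thm:chH3}), and finally to extend to complex arguments by Vitali's theorem. Since $\mathcal{L}_\nu$ is unitary and, as in Theorem~\ref{thm:st}, intertwines $\overline{\partial_t M_n(\partial_t)+A_1}$ with multiplication by $x\mapsto (ix+\nu)M_n(ix+\nu)+A_1$, the desired WOT convergence in $L(L_\nu^2(\mathbb{R};H))$ amounts, for $f,g\in L_\nu^2(\mathbb{R};H)$, to convergence of
\[
    \int_\mathbb{R}\langle((ix+\nu)M_n(ix+\nu)+A_1)^{-1}(\mathcal{L}_\nu f)(x),(\mathcal{L}_\nu g)(x)\rangle_H\,dx.
\]
By Theorem~\ref{thm:st} the integrand is pointwise bounded by $(1/c)\|(\mathcal{L}_\nu f)(x)\|\,\|(\mathcal{L}_\nu g)(x)\|\in L^1(\mathbb{R})$, so by dominated convergence it suffices to prove pointwise (in $x$) weak operator convergence of $((ix+\nu)M_n(ix+\nu)+A_1)^{-1}$ in $L(H)$.

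For fixed real $\lambda>\mu$, I would apply Theorem~\ref{thm:chH3} with coefficient $a_n=\lambda M_n(\lambda)$, which by hypothesis and positive scaling nonlocally $H$-converges to $\lambda M(\lambda)\in\mathcal{M}(\alpha',\beta',(A_0,A_1))$ for some $\alpha',\beta'>0$. Fix $\phi\in H$ and put $z_n\coloneqq(\lambda M_n(\lambda)+A_1)^{-1}\phi$; Theorem~\ref{thm:st} gives $\|z_n\|\le\|\phi\|/c$, so a subsequence satisfies $z_n\rightharpoonup z$. The block form in Proposition~\ref{prop:blockcom} yields $A_1^*=-A_1$, and exactness of $(A_0,A_1)$ gives $\rge(A_1^*)=\kar(A_0^*)=\rge(A_0)^{\bot}$; hence $\pi_0 A_1 z_n=0$ and, from $\lambda M_n z_n=\phi-A_1 z_n$, $\mathcal{A}_{0,\textnormal{k}}^\diamond(\lambda M_nz_n)=\mathcal{A}_{0,\textnormal{k}}^\diamond\phi$ independent of $n$, in particular relatively compact in $\dom(\mathcal{A}_0)^*$. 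Moreover $(\mathcal{A}_1^*)_{\textnormal{k}}^\diamond(z_n)(\psi)=\langle A_1z_n,\psi\rangle=\langle\phi-\lambda M_nz_n,\psi\rangle$, and $(\phi-\lambda M_nz_n)_n$ is bounded in $H_1$; compactness of $(A_0,A_1)$ from Proposition~\ref{prop:blockcom} together with $\dom(A_1)=\dom(A_1^*)$ supplies the compact embedding $\dom(\mathcal{A}_1^*)\hookrightarrow H_1$, so duality yields relative compactness in $\dom(\mathcal{A}_1^*)^*$. Theorem~\ref{thm:chH3} now delivers $\lambda M_nz_n\rightharpoonup\lambda Mz$, whence $A_1z_n\rightharpoonup\phi-\lambda Mz$, and the weakly closed graph of $A_1$ gives $z\in\dom(A_1)$ with $(\lambda M+A_1)z=\phi$. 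Uniqueness of such a $z$ comes from testing $(\lambda M+A_1)y=0$ against $\pi_1 y$: the skew-adjoint $A_1$-contribution vanishes, and by Lemma~\ref{lem:boinv}(b) the surviving Schur complement coefficient $\lambda(M_{11}-M_{10}M_{00}^{-1}M_{01})=((\lambda M)^{-1})_{11}^{-1}$ has real part at least $\lambda\alpha'$, forcing $\pi_1 y=0$ and then $\pi_0 y=0$ via the $\rge(A_0)$-projection of the equation; thus $z_n\rightharpoonup z$ along the full sequence.

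To pass from real to complex $\lambda$, for each $\phi,\psi\in H$ the scalar map $\lambda\mapsto\langle(\lambda M_n(\lambda)+A_1)^{-1}\phi,\psi\rangle$ is analytic on $\mathbb{C}_{\Re>\mu}$ and uniformly bounded by $(1/c)\|\phi\|\|\psi\|$ thanks to Theorem~\ref{thm:st}; it converges on $\mathbb{R}_{>\mu}$ by the preceding paragraph, and Vitali's theorem then delivers convergence on all of $\mathbb{C}_{\Re>\mu}$, in particular along the line $\lambda=ix+\nu$, completing the reduction of the first paragraph. The main obstacle I anticipate is the bookkeeping in the div-curl step, namely identifying which projections annihilate the $A_1$-contributions via skew-adjointness and transferring the compact embedding $\dom(\mathcal{A}_1^*)\hookrightarrow H_1$ from the compactness of $(A_0,A_1)$; once these are in place, the Vitali extension and the final dominated-convergence step are routine.
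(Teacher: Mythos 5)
Your argument is correct and takes a genuinely different route from the paper's. The paper works directly with the block operator matrix form with respect to $\rge(A_0)\oplus\rge(A_1^*)$, performs a Schur complement reduction to isolate the component on which $A_1$ acts nontrivially, invokes the block--matrix characterisation Theorem~\ref{thm:chH}, and then uses the compactness of the complex via Lemma~\ref{lem:oc} (compact resolvent of the reduced skew-self-adjoint part $\mathcal{B}$) to upgrade the weak convergence of one component to strong convergence and pass to the limit. You instead avoid the explicit Schur reduction entirely: you fix $\phi$, set $z_n=(\lambda M_n(\lambda)+A_1)^{-1}\phi$, feed $q_n=z_n$ into the div-curl characterisation Theorem~\ref{thm:chH3}, and obtain $\lambda M_n z_n\rightharpoonup\lambda Mz$; weak closedness of the graph of $A_1$ and uniqueness of the limit equation then identify $z$. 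Both routes use the compactness of the complex at one key step, but differently -- the paper to get strong convergence via Lemma~\ref{lem:oc}, you to verify the compactness hypothesis~(b) of Theorem~\ref{thm:chH3} via the compact embedding $\dom(\mathcal{A}_1^*)\hookrightarrow H_1$ (which indeed follows since $\dom(\mathcal{A}_1^*)=\kar(A_0^*)\cap\dom(A_1)\subseteq\dom(A_0^*)\cap\dom(A_1)$ with coinciding graph norms). You also spell out explicitly, via the unitary $\mathcal{L}_\nu$ and dominated convergence, the reduction of the $L(L_\nu^2(\mathbb{R};H))$-WOT statement to pointwise-in-frequency convergence, a step the paper leaves largely implicit; the Vitali argument you give to pass from $\mathbb{R}_{>\mu}$ to the line $\Re\lambda=\nu$ serves the same purpose as the paper's appeal to the identity theorem. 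Two small remarks: in the uniqueness step the claimed lower bound on the Schur complement should read $\alpha'$, not $\lambda\alpha'$ (since $\alpha'$ is already the coercivity constant for $\lambda M(\lambda)$), though this does not affect the conclusion; and one should note that only the implication (i)$\Rightarrow$(ii) of Theorem~\ref{thm:chH3} is used, which does not require separability of $H_1$, so no separability hypothesis needs to be added.
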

\begin{proof}
 $(A_0,A_1)\coloneqq\left(\left(\begin{smallmatrix}0 & B_2^* \\ B_0 & 0\end{smallmatrix}\right),\left(\begin{smallmatrix}0 & B_1 \\ -B_1^* & 0\end{smallmatrix}\right)\right)$ is compact and exact, by Proposition \ref{prop:blockcom}.  
 Let $\lambda \in \mathbb{R}_{>\mu}$. We write $M_{n,ij}(\lambda) \in L(\rge(A_j),\rge(A_i))$ according to the decomposition induced by $\rge(A_0)\oplus \rge(A_1)$ for all $i,j\in\{0,1\}$. Let $F\in H$. We define
 \[
     U_n\coloneqq \left(\lambda M_n(\lambda) + \left(\begin{smallmatrix}0 & B_1 \\ -B_1^* & 0\end{smallmatrix}\right)\right)^{-1}F.
 \]
 Writing $F_j, U_{j,n}$ for the components in $\rge(A_j)$ for $j\in\{0,1\}$, we obtain the following equivalent formulation for the  equation defining $U_n$:
 \begin{equation}\label{eq:maxhom}
    \lambda \begin{pmatrix} M_{n,00}(\lambda) & M_{n,01}(\lambda) \\ M_{n,10}(\lambda)   & M_{n,11}(\lambda) 
     \end{pmatrix}\begin{pmatrix}  U_{0,n}\\ U_{1,n} \end{pmatrix} + \begin{pmatrix} \mathcal{B} & 0 \\ 0 & 0 \end{pmatrix}\begin{pmatrix} U_{0,n} \\ U_{1,n} \end{pmatrix} = \begin{pmatrix} F_0 \\ F_1 \end{pmatrix},
 \end{equation}
 where $\mathcal{B}$ denotes the operator acting as $\left(\begin{smallmatrix}0 & B_1 \\ -B_1^* & 0\end{smallmatrix}\right)$ which is domain-wise restricted to the orthogonal complement of the null space of $\left(\begin{smallmatrix}0 & B_1 \\ -B_1^* & 0\end{smallmatrix}\right)$ and co-domain-wise restricted to the range of $\left(\begin{smallmatrix}0 & B_1 \\ -B_1^* & 0\end{smallmatrix}\right)$. A straightforward computation shows that equation \eqref{eq:maxhom} equivalently reads as
 \begin{multline*}
      \begin{pmatrix} \lambda M_{n,00}(\lambda)-\lambda M_{n,01}(\lambda)M_{n,11}(\lambda)^{-1}M_{n,10}(\lambda) & 0 \\  M_{n,11}(\lambda)^{-1}M_{n,10}(\lambda)   & 1 
     \end{pmatrix}\begin{pmatrix}  U_{0,n}\\ U_{1,n} \end{pmatrix} + \begin{pmatrix} \mathcal{B} & 0 \\ 0 & 0 \end{pmatrix}\begin{pmatrix} U_{0,n} \\ U_{1,n} \end{pmatrix}\\ = \begin{pmatrix} F_0 - M_{n,01}(\lambda)M_{n,11}(\lambda)^{-1}F_1 \\ \tfrac{1}{\lambda} M_{n,11}(\lambda)^{-1}F_1 \end{pmatrix}
 \end{multline*}
 or 
  \begin{multline}\label{eq:maxhom2}
 \begin{pmatrix}  U_{0,n}\\ U_{1,n} \end{pmatrix} =\\    \begin{pmatrix} \left(\lambda\left(M_{n,00}(\lambda)- M_{n,01}(\lambda)M_{n,11}(\lambda)^{-1}M_{n,10}(\lambda)\right) + \mathcal{B}\right)^{-1} \left(F_0  - M_{n,01}(\lambda)M_{n,11}(\lambda)^{-1}F_1 \right) \\  - M_{n,11}(\lambda)^{-1}M_{n,10}(\lambda) U_{0,n} + \tfrac{1}{\lambda} M_{n,11}(\lambda)^{-1}F_1\end{pmatrix}. \end{multline}
 By Theorem \ref{thm:chH}, we have that 
 \begin{align*}
    \lambda\left(M_{n,00}(\lambda)- M_{n,01}(\lambda)M_{n,11}(\lambda)^{-1}M_{n,10}(\lambda)\right) &\to 
   \lambda\left(M_{00}(\lambda)- M_{01}(\lambda)M_{11}(\lambda)^{-1}M_{10}(\lambda)\right)\\
    M_{n,01}(\lambda)M_{n,11}(\lambda)^{-1}& \to M_{01}(\lambda)M_{11}(\lambda)^{-1}\\
    M_{n,11}(\lambda)^{-1}M_{n,10}(\lambda) &\to M_{11}(\lambda)^{-1}M_{10}(\lambda) \\
    M_{n,11}(\lambda)^{-1} &\to M_{11}(\lambda)^{-1}
   \end{align*}
   as $n\to \infty$ with convergence in the respective weak operator topologies.
   We note that, by the identity theorem the convergence of the just mentioned operator sequences does actually hold for all $\lambda\in \mathbb{C}$ provided $\Re\lambda$ is large enough.
   
   Next, by the compactness of the complex $(A_0,A_1)$, the operator $\mathcal{B}$ has compact resolvent. By Lemma \ref{lem:oc} below applied to $B=\mathcal{B}$, $T_n =\lambda\left(M_{n,00}(\lambda)- M_{n,01}(\lambda)M_{n,11}(\lambda)^{-1}M_{n,10}(\lambda)\right)$ and $\phi_n = \left(F_0  - M_{n,01}(\lambda)M_{n,11}(\lambda)^{-1}F_1 \right)$, we deduce that $(U_{0,n})_n$ converges in norm to some $U_0$. Hence, $(U_{1,n})_n$ weakly converges to some $U_1$. Letting $n\to \infty$ in \eqref{eq:maxhom2} thus leads to
   \[
 \begin{pmatrix}  U_{0}\\ U_{1} \end{pmatrix} =\\    \begin{pmatrix} \left(\lambda\left(M_{00}(\lambda)- M_{01}(\lambda)M_{11}(\lambda)^{-1}M_{10}(\lambda)\right) + \mathcal{B}\right)^{-1} \left(F_0  - M_{01}(\lambda)M_{11}(\lambda)^{-1}F_1 \right) \\  - M_{11}(\lambda)^{-1}M_{10}(\lambda) U_{0} + \tfrac{1}{\lambda} M_{11}(\lambda)^{-1}F_1\end{pmatrix}.
   \]Rearranging terms, we obtain with $U=(U_0,U_1)$
   \[
    \left(\lambda M (\lambda) + \left(\begin{smallmatrix}0 & B_1 \\ -B_1^* & 0\end{smallmatrix}\right)\right)U = F.
   \]
  This settles the proof.
\end{proof}

We complete the latter proof by stating and proving Lemma \ref{lem:oc}.

\begin{lemma}\label{lem:oc} Let $B\colon \dom(B)\subseteq H\to H$ be skew-self-adjoint in the Hilbert space $H$ and assume that $\dom(B)\hookrightarrow H$ is compact. Assume furthermore that $(T_n)_n$ is a sequence in $L(H)$ such that 
$\Re T_n\geq c$ for all $n\in \mathbb{N}$. If $T_n\to T$ in the weak operator topology for some $T\in L(H)$ as $n\to \infty$, then 
\[
   (    T_n+B)^{-1}\phi_n \to    (    T+B)^{-1}\phi
\] in $H$
for all $(\phi_n)_n$ weakly convergent to some $\phi\in H$.
\end{lemma}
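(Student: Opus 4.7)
The approach is to combine the uniform coercivity of $T_n+B$ with the compactness of $\dom(B)\hookrightarrow H$ and a standard subsequence/uniqueness argument. First I would observe that skew-self-adjointness of $B$ gives $\Re\langle (T_n+B)u,u\rangle = \Re\langle T_n u,u\rangle \geq c\|u\|^2$ for $u\in\dom(B)$, so $T_n+B$ is continuously invertible with $\|(T_n+B)^{-1}\|\leq 1/c$ uniformly in $n$. By Lemma~\ref{lem:conpd}(b) the WOT limit $T$ also satisfies $\Re T\geq c$, so $T+B$ is continuously invertible as well. Thus the right-hand side $(T+B)^{-1}\phi$ of the claimed limit is well-defined.

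Next, set $u_n\coloneqq (T_n+B)^{-1}\phi_n$. The uniform bound gives that $(u_n)_n$ is bounded in $H$ (as $(\phi_n)_n$ is weakly, hence norm, bounded), and then the identity $Bu_n = \phi_n - T_n u_n$ shows that $(u_n)_n$ is bounded in the graph norm of $B$. Here the hypothesis that $\dom(B)\hookrightarrow H$ is compact enters: from any subsequence of $(u_n)_n$ one can extract a further subsequence $(u_{n_k})_k$ converging in norm in $H$ to some $u$.

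The next step is to identify $u$ as $(T+B)^{-1}\phi$. The key passage to the limit is $T_{n_k}u_{n_k}\rightharpoonup Tu$, which I would verify by writing $\langle T_{n_k}u_{n_k},v\rangle = \langle u_{n_k},T_{n_k}^*v\rangle$ and using that WOT convergence $T_n\to T$ is equivalent to WOT convergence $T_n^*\to T^*$, combined with the strong convergence $u_{n_k}\to u$ (a strong–weak bilinear pairing). Consequently $Bu_{n_k} = \phi_{n_k} - T_{n_k}u_{n_k} \rightharpoonup \phi - Tu$, and since the graph of the closed operator $B$ is weakly closed, $u\in\dom(B)$ with $(T+B)u=\phi$, i.e.\ $u=(T+B)^{-1}\phi$.

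The main obstacle I anticipate is precisely this mixed strong–WOT limit inside the product $T_{n_k}u_{n_k}$; joint continuity of multiplication fails in WOT (compare Proposition~\ref{prop:invdis} and the surrounding discussion), so strong convergence of one factor, obtained via the compact embedding, is essential. Once the limit is identified and shown to be independent of the chosen subsequence, a standard subsequence principle upgrades the convergence to the full sequence, yielding the claim.
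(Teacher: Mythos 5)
Your proof is correct and follows essentially the same route as the paper's: both set $u_n=(T_n+B)^{-1}\phi_n$, obtain a uniform bound in the graph norm of $B$, use the compact embedding $\dom(B)\hookrightarrow H$ to extract a subsequence converging in $H$-norm, pass to the limit in $\phi_n=T_nu_n+Bu_n$, and conclude by uniqueness plus the subsequence principle. The only difference is cosmetic: you spell out the strong--WOT argument for $T_{n_k}u_{n_k}\rightharpoonup Tu$ (which the paper leaves implicit) and the bound on $(u_n)$ in $\dom(B)$ (which the paper delegates to a citation).
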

\begin{proof}
 Let $\phi_n, \phi$ be as in the statement. We define
 \[
    u_n\coloneqq  (    T_n+B)^{-1}\phi_n.
 \]
We obtain that $(u_n)_n$ is bounded in $\dom(B)$; see also \cite[Lemma 2.12]{EGW17_D2N} for the precise argument. Possibly choosing a subsequence (not relabelled) of $(u_n)_n$, we may assume that $u_n\rightharpoonup u$ in $\dom(B)$ for some $u$. In particular, we obtain that $u_n\to u$ in $H$. Hence, in the equality $\phi_n = T_n u_n + Bu_n$, we let $n\to\infty$ and obtain
\[
   \phi = Tu + Bu.
\]
The continuous invertibility of $T+B$ identifies $u$ and thus the whole sequence converges weakly in $\dom(B)$ and strongly in $H$, which is the assertion.
\end{proof}

\begin{remark}
  A contradiction argument yields that the convergence implied in Lemma \ref{lem:oc} together with the compactness assumption is sufficient for operator norm convergence of $(T_n+B)^{-1} \to (T+B)^{-1}$ as $n\to\infty$.
\end{remark}

\begin{remark}
The proof of Theorem \ref{thm:hommax} is a variant of the rationale employed in the proof of \cite[Theorem 5.5]{W16_HPDE}. However, we note that the conditions in \cite{W16_HPDE} are more restrictive than the ones here. Indeed, in \cite{W16_HPDE} only a compactness statement for `$G$-convergence' was obtained. Moreover, in order to prove well-posedness of the limit equation, the class of sequences $M_n$ was more restrictive in the sense that a change of type was not permitted.
\end{remark}

Since we have discussed nonlocal $H$-convergence with respect to operator complexes like $(\grad,\curl)$ and $(\interior{\grad},\interior{\curl})$ only, it might be of interest to put the convergence assumed in Theorem~\ref{thm:hommax} into perspective of nonlocal $H$-convergence of simpler complexes. This is done in the following result and the subsequent example.

\begin{proposition}\label{prop:diagcase} Let the assumptions and definitions of Proposition~\ref{prop:blockcom} be in effect, $\alpha,\beta>0$. Let $(\epsilon_n)_n$ in $\mathcal{M}(\alpha,\beta,(B_0,B_1))$ and $(\mu_n)_n$ in $\mathcal{M}(\alpha,\beta,(B_2^*,B_1^*))$ be bounded sequences, $\epsilon \in L(K_2)$, $\mu \in L(K_1)$ be continuously invertible. Then for all $n\in\mathbb{N}$
\[
   \begin{pmatrix}
     \epsilon_n & 0 \\ 0 & \mu_n
   \end{pmatrix} \in \mathcal{M}(\alpha,\beta,(A_0,A_1)).
\]
Moreover, the following conditions are equivalent.
\begin{enumerate}
 \item[(i)] $(\epsilon_n)_n\to \epsilon$ $H$-nonlocally w.r.t.~$(B_0,B_1)$ and $(\mu_n)_n \to \mu$ $H$-nonlocally w.r.t.~$(B_2^*,B_1^*)$. 
 \item[(ii)] $\big( \left( \begin{smallmatrix}
     \epsilon_n & 0 \\ 0 & \mu_n
   \end{smallmatrix}\right) \big)_n\to \left(  \begin{smallmatrix}
     \epsilon & 0 \\ 0 & \mu
   \end{smallmatrix}\right) $ $H$-nonlocally w.r.t.~$(A_0,A_1)$.
\end{enumerate}
\end{proposition}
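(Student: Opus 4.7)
The plan is to use the block-matrix characterisation of nonlocal $H$-convergence from Theorem~\ref{thm:chH} and exploit the diagonal form of $\left(\begin{smallmatrix}\epsilon & 0 \\ 0 & \mu\end{smallmatrix}\right)$: each of the four quantities appearing in Theorem~\ref{thm:chH}(ii) will split as an orthogonal direct sum, one summand built from $\epsilon$ and one from $\mu$, whence the equivalence follows from applying Theorem~\ref{thm:chH} three times.

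First, I would set up the requisite direct-sum identifications. Exactness of $(B_0,B_1)$ together with Proposition~\ref{prop:comelm} yields $K_1=\rge(B_0)\oplus \rge(B_1^*)$, and exactness of $(B_1,B_2)$ yields $K_2=\rge(B_2^*)\oplus \rge(B_1)$. Regrouping the four summands gives
\[
  H_1 = K_2\oplus K_1 = \bigl(\rge(B_2^*)\oplus \rge(B_0)\bigr) \oplus \bigl(\rge(B_1)\oplus \rge(B_1^*)\bigr) = \rge(A_0)\oplus \rge(A_1^*),
\]
which is exactly the Helmholtz-type decomposition \eqref{eq:od} attached to $(A_0,A_1)$ via Proposition~\ref{prop:blockcom}. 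Since $a=\left(\begin{smallmatrix}\epsilon & 0 \\ 0 & \mu\end{smallmatrix}\right)$ preserves the $K_2\oplus K_1$ split and each of the four pieces $\rge(B_2^*), \rge(B_0), \rge(B_1), \rge(B_1^*)$ lies entirely in one of $K_2$ or $K_1$, a direct verification yields the block-diagonal identity
\[
   a_{ij} = \epsilon_{ij}\oplus \mu_{ij}\quad (i,j\in\{0,1\}),
\]
where $\epsilon_{ij}$ denotes the $(i,j)$-block of $\epsilon$ with respect to the complex on $K_2$ and $\mu_{ij}$ the corresponding block of $\mu$ with respect to $(B_0,B_1)$. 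This identity is the central bookkeeping step.

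Because positivity inequalities and invertibility are preserved under orthogonal direct sums, the first assertion on membership in $\mathcal{M}(\alpha,\beta,(A_0,A_1))$ is immediate from the decoupling together with Lemma~\ref{lem:boinv} applied to both the $\epsilon$- and the $\mu$-summand. For the equivalence (i)$\Leftrightarrow$(ii), I would observe that composition, inversion, and the Schur complement all commute with orthogonal direct sums, so each of the four sequences $(a_{n,00}^{-1})_n$, $(a_{n,10}a_{n,00}^{-1})_n$, $(a_{n,00}^{-1}a_{n,01})_n$ and $(a_{n,11}-a_{n,10}a_{n,00}^{-1}a_{n,01})_n$ appearing in Theorem~\ref{thm:chH}(ii) splits as an orthogonal direct sum of the corresponding sequences built from $\epsilon_n$ and from $\mu_n$. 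Weak operator topology convergence on an orthogonal direct sum is equivalent to weak operator topology convergence on each summand, so Theorem~\ref{thm:chH} applied once to $(A_0,A_1)$ and once to each of the subcomplexes $(B_0,B_1)$ and $(B_2^*,B_1^*)$ yields the claimed equivalence. The only nontrivial step is the block-decomposition identity; the remainder is purely formal.
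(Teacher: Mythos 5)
Your proof is correct and follows essentially the same route as the paper's: both proceed by exhibiting $H_1 = K_2\oplus K_1$ in terms of the four ranges $\rge(B_2^*), \rge(B_1), \rge(B_0), \rge(B_1^*)$, observing that the diagonal coefficient therefore decouples into blocks $a_{ij}=\epsilon_{ij}\oplus\mu_{ij}$, and invoking Theorem~\ref{thm:chH} together with the compatibility of Schur complements, inversion and weak operator convergence with orthogonal direct sums. One minor remark: the reference to Lemma~\ref{lem:boinv} in checking the first assertion (membership in $\mathcal{M}(\alpha,\beta,(A_0,A_1))$) is unnecessary overhead, since the required positivity and invertibility conditions involve only $a_{00}$, $a_{00}^{-1}$, $(a^{-1})_{11}$, $(a^{-1})_{11}^{-1}$, each of which splits componentwise by the decoupling without any Schur-complement computation.
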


\begin{example}\label{ex:max1} With the setting given in Example \ref{ex:max0}, we obtain for operator sequences  $(\epsilon_n)_n$ and $(\mu_n)_n$ in $L(L^2(\Omega)^3)$ satisfying suitable positive definiteness constraints and for $\epsilon,\mu\in L(L^2(\Omega)^3)$ that 
$\big(   \left( \begin{smallmatrix}
     \epsilon_n & 0 \\ 0 & \mu_n
   \end{smallmatrix}\right) \big)_n\to \left(  \begin{smallmatrix}
     \epsilon & 0 \\ 0 & \mu
   \end{smallmatrix}\right) $ $H$-nonlocally w.r.t.~$(A_0,A_1)$ if and only if $(\epsilon_n)_n\to \epsilon$ $H$-nonlocally w.r.t.~$(\grad,\curl)$ and $(\mu_n)_n\to \mu$ $H$-nonlocally w.r.t.~$(\interior{\grad},\interior{\curl})$.
\end{example}

\begin{proof}[Proof of Proposition~\ref{prop:diagcase}] By Proposition~\ref{prop:blockcom} and \eqref{eq:od}, we have the decomposition  $H_1 = \rge(A_0)\oplus \rge(A_1^*)$. Moreover, we obtain from the exactness of $(B_0,B_1)$ and $(B_2^*,B_1^*)$ the decomposition
\[
   H_1 = K_2\oplus K_1 = \big(\rge(B_2^*)\oplus \rge(B_1)\big)\oplus \big(\rge(B_0)\oplus \rge(B_1^*)\big).
\]
Next, from $\rge(A_0)=\rge(B_2^*)\oplus \rge(B_0)$ and $\rge(A_1^*)=\rge(B_1)\oplus \rge(B_1^*)$, we deduce using the operators
\begin{align*}
  U & \coloneqq \begin{pmatrix} \iota_{\textnormal{r},A_0} & \iota_{\textnormal{r},A_1^*}\end{pmatrix} \\
  U_1 & \coloneqq \begin{pmatrix} \iota_{\textnormal{r},B_0} & \iota_{\textnormal{r},B_1^*}\end{pmatrix} \\
  U_0 &  \coloneqq \begin{pmatrix} \iota_{\textnormal{r},B_2^*} & \iota_{\textnormal{r},B_1}\end{pmatrix},
\end{align*}
(see also \eqref{eq:U}) that for any $\epsilon \in L(K_2)$ and $\mu\in L(K_1)$, we have
\[
  U^* \begin{pmatrix} \epsilon & 0 \\ 0 & \mu \end{pmatrix} U = \begin{pmatrix} \begin{pmatrix} \epsilon_{00} & 0 \\ 0 & \mu_{00}  \end{pmatrix} & \begin{pmatrix} \epsilon_{01} & 0 \\ 0 & \mu_{01} \end{pmatrix} \\ \begin{pmatrix} \epsilon_{10} & 0 \\ 0 & \mu_{10} \end{pmatrix} & \begin{pmatrix} \epsilon_{11} & 0 \\ 0 & \mu_{11} \end{pmatrix} \end{pmatrix},
\]
where
\begin{align*}
   U_0^* \epsilon U_0 &= \begin{pmatrix} \epsilon_{00} & \epsilon_{01} \\ \epsilon_{10} & \epsilon_{11} \end{pmatrix}\text{ and }\\
      U_1^* \mu U_1 &= \begin{pmatrix} \mu_{00} & \mu_{01} \\ \mu_{10} & \mu_{11} \end{pmatrix}.
\end{align*}
These representations of $\varepsilon$ and $\mu$ applied to $\epsilon_n$ and $\mu_n$ instead yield the first assertion of the present proposition; the equivalence then follows from Theorem~\ref{thm:chH} in a straightforward manner.
\end{proof}

\begin{remark}
\label{rem:hommax} The main result in \cite{Barbatis2003} is contained in Theorem \ref{thm:hommax}. In fact, it suffices to take the setting as outlined in Example \ref{ex:max0}; we also refer to Example~\ref{ex:max1}. We also recall that local $H$-convergence implies nonlocal $H$-convergence with respect to both $(\interior{\grad},\interior{\curl})$ and $({\grad},{\curl})$ (see also Theorem~\ref{thm:lHnlH} and Remark \ref{rem:neu}). Moreover, we note that the coefficients treated in \cite{Barbatis2003} are arranged in a way that their Fourier--Laplace transformed images locally $H$-converge. We also refer to the subsequent example.
\end{remark}

A more concrete example with change of type, that is, where the underlying problem is such that the Maxwell's equations rapidly oscillate between the parabolic eddy current problem and the hyperbolic full Maxwell's equations, is considered next. 

\begin{example}\label{ex:max3} Let $\Omega\subseteq \mathbb{R}^3$ be such that both $(\interior{\grad},\interior{\curl})$ and $(\grad,\curl)$ are compact and exact sequences; e.g.~$\Omega$ being an open, simply connected and bounded weak Lipschitz domain with connected $\mathbb{R}^3\setminus{\Omega}$. 

(a) Let $\epsilon, \mu, \sigma \in L^\infty(\mathbb{R}^3)^{3\times 3}$ be $[0,1)^3$-periodic with $\epsilon,\mu$ attaining values in the (possibly complex) self-adjoint matrices. Define $\eps_n(x)\coloneqq \eps(nx)$ for a.a.~$x\in \mathbb{R}^3$ and similarly for $\mu_n$, and $\sigma_n$. Assume that there exists $\eta>0$ such that for all $\lambda\in \mathbb{C}_{\Re>\eta}$ we have
\begin{equation}\label{eq:pdmax}
    \Re \left(\lambda \eps+\sigma\right), \mu\geq c\quad(n\in\mathbb{N})
\end{equation}
for some $c>0$ almost everywhere in the sense of positive definiteness. We emphasise that if $\eps(x)\geq c_1$ for a.a.\ $x\in \Omega_1$ and $\Re \sigma(x)\geq c_1$ for a.a.\ $x\in \Omega_2\coloneqq\mathbb{R}^3\setminus \Omega_1$ for some $c_1>0$ that $\epsilon$ on $\Omega_2$ and $\sigma$ on $\Omega_1$ are allowed to vanish, while the positive definiteness condition in \eqref{eq:pdmax} can still be warranted. This introduces a highly oscillatory change of type. Then by Theorem \ref{thm:hommax} (see also Remark~\ref{rem:hommax} and Example~\ref{ex:max1})
\begin{multline*}
   \overline{ \left(\partial_t \begin{pmatrix} \epsilon_n & 0 \\ 0 & \mu_n \end{pmatrix} + \begin{pmatrix} \sigma_n & 0 \\ 0 & 0 \end{pmatrix} + \begin{pmatrix} 0 & -\curl \\ \interior{\curl} & 0 \end{pmatrix}\right)}^{-1} \\ \to     \overline{\left(\partial_t \begin{pmatrix} \Big\langle \epsilon+(\cdot)\sigma\Big\rangle_{\textrm{hom}}(\partial_t) & 0 \\ 0 & \Big\langle \mu \Big\rangle_{\textrm{hom}} \end{pmatrix} + \begin{pmatrix} 0 & -\curl \\ \interior{\curl} & 0 \end{pmatrix}\right)}^{-1}
\end{multline*} in the weak operator topology of $L\big(L_\nu^2(\mathbb{R};L^2(\Omega)^6)\big)$ as $n\to\infty$,
where $\Big\langle \mu \Big\rangle_{\textrm{hom}}$ is the standard homogenised matrix associated with $\mu$ and 
\[
   \Big\langle \epsilon+(\cdot)\sigma\Big\rangle_{\textrm{hom}}(\partial_t) \coloneqq \Big(\lambda \mapsto \Big\langle \epsilon+\lambda^{-1}\sigma\Big\rangle_{\textrm{hom}}\Big)(\partial_t).
\]
This is a memory term that occurs during the homogenisation process. We note here that such an effect has been observed already in \cite[p. 144]{Jikov1994}, but also in \cite[Theorem 3.2]{Wellander2001}.

(b) Let $\epsilon,\sigma$ as in (a) and $\mu_n\coloneqq a_n+k_n*$ as in Example~\ref{ex:mcv3} (we shall also re-use the notation $a$ and $k*$ for the operators  mentioned in that example). Then the results in Example~\ref{ex:mcv3}, Example~\ref{ex:max1}, and Theorem~\ref{thm:hommax} yield
\begin{multline*}
   \overline{ \left(\partial_t \begin{pmatrix} \epsilon_n & 0 \\ 0 & a_n+k_n* \end{pmatrix} + \begin{pmatrix} \sigma_n & 0 \\ 0 & 0 \end{pmatrix} + \begin{pmatrix} 0 & -\curl \\ \interior{\curl} & 0 \end{pmatrix}\right)}^{-1} \\ \to     \overline{\left(\partial_t \begin{pmatrix} \Big\langle \epsilon+(\cdot)\sigma\Big\rangle_{\textrm{hom}}(\partial_t) & 0 \\ 0 & a+k* \end{pmatrix} + \begin{pmatrix} 0 & -\curl \\ \interior{\curl} & 0 \end{pmatrix}\right)}^{-1}
\end{multline*}
in the weak operator topology of $L\big(L_\nu^2(\mathbb{R};L^2(\Omega)^6)\big)$ as $n\to\infty$.
We emphasise that the convolution $k*$ is computed with respect to the \emph{spatial} variables. Thus, the limit model is both nonlocal in space and time.
\end{example}

\section{More examples}\label{sec:mex}

In this section, we shall provide two more applications. In fact, since our results has been developed for the abstract setting of 
closed complexes in Hilbert spaces and suitable operators as coefficients, this section may also be read as the versatility of the notion of complexes in the analysis of partial differential equations.

\subsection{Homogenisation problems for fourth order elliptic equations}

In this section, we shall revisit the homogenisation problem for thin plates (see e.g.~\cite{Pastukhova2016})
In that reference, the author studied operator norm error estimates for the homogenisation problem associated to the differential expression
\[
    \sum_{i,j,s,h\in \{1,2,3\}} \partial_i \partial_j a_{ijsh} \partial_s\partial_h,
\]
where the coefficients $a_{ijsh}$ are highly oscillatory. It is easy to see that the latter differential expression can be reformulated as 
\[
    \dive \Dive a \Grad \grad,
\]
where $\Grad$ is the Jacobian matrix and $\Dive$ the row-wise divergence and $a$ acts as a mapping from $2$-tensors to $2$-tensors. The variational formulation is then given by
\[
    \langle a \Grad \grad u , \Grad\grad \phi \rangle = f(\phi)
\]
for $\phi$ belonging to a suitable test-function space. If we assume $\Grad\grad$ to be endowed with full homogeneous boundary conditions (i.e.~the $L^2$-closure of $\Grad\grad$ restricted to test functions compactly supported in $\Omega$), it is possible to derive the second variational problem to be discussed for nonlocal homogenisation problems, which we will do in the following. In fact, it will turn out that the closed and exact complex involving $\Grad\grad$ bases on $(\interior{\grad},\interior{\curl})$. For more aspects of this (and an extension of this complex) we refer to the Pauly--Zulehner complex (\cite{Pauly2016}; see also \cite{Quenneville-Bair2015}). 

We introduce the following differential operators:

\begin{definition} Let $\Omega\subseteq \mathbb{R}^3$ be open and bounded. We define
\begin{align*}
 \interior{\He} & \colon H_0^2(\Omega) \subseteq L^2(\Omega)  \to L^2_{\sym}(\Omega)^{3\times 3}
 \\ \phi & \mapsto (\partial_{i}\partial_j \phi)_{i,j\in \{1,2,3\}}
 \\ \interior{\Curl}_{\sym} & \colon \dom(\interior{\curl})^3\cap L^2_{\sym}(\Omega)^{3\times 3} \subseteq L^2_{\sym}(\Omega)^{3\times 3}  \to L^2_{\trf}(\Omega)^{3\times 3}
 \\ (\phi_{ij})_{i,j\in\{1,2,3\}} &\mapsto \begin{pmatrix} \interior{\curl} (\phi_{1j})_{j\in\{1,2,3\}} \\ \interior{\curl} (\phi_{2j})_{j\in\{1,2,3\}} \\ \interior{\curl} (\phi_{3j})_{j\in\{1,2,3\}} \end{pmatrix}, 
 \end{align*}
 where $L^2_{\sym}(\Omega)^{3\times 3}$ is the set of symmetric $3$-by-$3$ matrices with entries from $L^2(\Omega)$ and $L^2_{\trf}(\Omega)^{3\times 3}$ is the set of $3$-by-$3$ matrices with vanishing matrix trace and entries from $L^2(\Omega)$. 
 \end{definition}
 
 For convenience of the reader, we show exactness of $(\interior{\He},\interior{\Curl}_{\sym})$ with a proof independent of \cite{Pauly2016}. Recall that $(\interior{\grad},\interior{\curl})$ is closed and exact, for instance, if $\Omega$ is an open, bounded weak Lipschitz domain with connected complement.
 
 \begin{theorem}\label{thm:gghe} Let $\Omega\subseteq \mathbb{R}^3$ open and bounded with $(\interior{\grad},\interior{\curl})$  exact and closed. Then $(\interior{\He},\interior{\Curl}_{\sym})$ is an exact and closed complex. 
 \end{theorem}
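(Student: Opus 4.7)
The plan is to reduce everything to the assumed closedness and exactness of $(\interior{\grad},\interior{\curl})$ by treating the two operators row-by-row and then using symmetry. First I would verify the complex property: for $\phi\in H_0^2(\Omega)$ one has $\partial_i\phi\in H_0^1(\Omega)$, so the $i$-th row of $\interior{\He}\phi$ is $\interior{\grad}(\partial_i\phi)$. Applying $\interior{\curl}$ row-wise gives $\interior{\curl}\interior{\grad}(\partial_i\phi)=0$ by the complex property of $(\interior{\grad},\interior{\curl})$, so $\interior{\Curl}_{\sym}\interior{\He}\phi=0$. A direct index check shows that for any symmetric matrix field the row-wise curl is automatically traceless (the three diagonal entries pair up and cancel after using symmetry), so the prescribed codomain $L^2_{\trf}(\Omega)^{3\times 3}$ causes no loss.

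Next I would show that $\interior{\He}$ has closed range by iterating a Poincar\'e-type estimate. Closedness of $\rge(\interior{\grad})$ and injectivity of $\interior{\grad}$ on $H_0^1(\Omega)$ (a consequence of the exactness of $(\interior{\grad},\interior{\curl})$, hence of $\interior{\grad}$ having trivial kernel on $H_0^1$) yield a constant $c>0$ with $\|u\|_{L^2}\leq c\|\interior{\grad}u\|_{L^2}$ for $u\in H_0^1(\Omega)$. Applying this first to $\partial_i\phi\in H_0^1(\Omega)$ and then to $\phi\in H_0^1(\Omega)$ produces $\|\phi\|_{L^2}\leq c^2\|\interior{\He}\phi\|_{L^2}$, giving injectivity of $\interior{\He}$ together with closed range.

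For exactness, let $\psi\in\dom(\interior{\Curl}_{\sym})$ satisfy $\interior{\Curl}_{\sym}\psi=0$. Each row $\psi_i\in\dom(\interior{\curl})$ fulfils $\interior{\curl}\psi_i=0$, and exactness of $(\interior{\grad},\interior{\curl})$ produces $u_i\in H_0^1(\Omega)$ with $\psi_i=\interior{\grad}u_i$, i.e.\ $\psi_{ij}=\partial_j u_i$. The symmetry $\psi_{ij}=\psi_{ji}$ turns into $\partial_j u_i=\partial_i u_j$, and since $u\coloneqq(u_1,u_2,u_3)\in H_0^1(\Omega)^3\subseteq \dom(\interior{\curl})$ this reads $\interior{\curl}u=0$. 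A second application of exactness of $(\interior{\grad},\interior{\curl})$ yields $\phi\in H_0^1(\Omega)$ with $\interior{\grad}\phi=u$. Then $\partial_i\phi=u_i\in H_0^1(\Omega)$ for every $i$, which places $\phi$ in $H_0^2(\Omega)=\dom(\interior{\He})$ and gives $\interior{\He}\phi=(\partial_j u_i)_{ij}=\psi$.

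The step I expect to require the most care is the last identification $\phi\in H_0^2(\Omega)$ rather than merely $\phi\in H^2(\Omega)\cap H_0^1(\Omega)$ with every first-order partial in $H_0^1(\Omega)$. Under the mild regularity that typically underlies the assumption that $(\interior{\grad},\interior{\curl})$ is closed and exact (weak Lipschitz with connected complement is the canonical case, see Example~\ref{ex:dfnf}), this is the standard characterisation of $H_0^2(\Omega)$ via vanishing trace of $\phi$ and of its normal derivative. If one wants to stay purely inside the abstract framework, one may instead zero-extend $u$ to $\tilde u\in H^1(\mathbb{R}^3)^3$, invoke Poincar\'e's lemma on $\mathbb{R}^3$ to obtain a potential $\tilde\phi\in H^2(\mathbb{R}^3)$ with $\grad\tilde\phi=\tilde u$, and fix the constant so that $\tilde\phi$ vanishes on the connected exterior of $\Omega$; restriction then gives the desired $\phi\in H_0^2(\Omega)$.
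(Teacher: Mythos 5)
Your argument follows the same row-by-row reduction to $(\interior{\grad},\interior{\curl})$ that the paper uses, and most of the individual steps are sound: the complex property (with the helpful side-check that the row-wise curl of a symmetric field is automatically trace-free, which the paper leaves implicit), the iterated Poincar\'e estimate $\|\phi\|_{L^2}\leq c^2\|\interior{\He}\phi\|_{L^2}$ giving injectivity and closed range of $\interior{\He}$, and the double application of exactness of $(\interior{\grad},\interior{\curl})$ to produce a potential.

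There is, however, a genuine gap. Recall that a complex $(A_0,A_1)$ is called closed precisely when \emph{both} $\rge(A_0)$ \emph{and} $\rge(A_1)$ are closed. You establish closedness of $\rge(\interior{\He})$ but never address $\rge(\interior{\Curl}_{\sym})$. Exactness, $\rge(\interior{\He})=\kar(\interior{\Curl}_{\sym})$, says nothing about whether $\rge(\interior{\Curl}_{\sym})$ is closed -- kernels of closed operators are always closed, so nothing extra is gained there. The paper fills this in by using that $\interior{\curl}$ has closed range (part of the standing hypothesis), hence is bounded below on $\dom(\interior{\curl})\cap\kar(\interior{\curl})^\bot$, and then identifying $\kar(\interior{\Curl}_{\sym})=\kar(\interior{\curl})^3\cap L^2_{\sym}(\Omega)^{3\times 3}$ in order to transfer that lower bound row-wise to $\interior{\Curl}_{\sym}$ on $\kar(\interior{\Curl}_{\sym})^\bot$. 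Without this (or some equivalent argument) you have not shown the complex is closed, only that $\interior{\He}$ has closed range.

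Your scruple at the final step -- that $\phi\in H_0^1(\Omega)$ with every $\partial_i\phi\in H_0^1(\Omega)$ need not a priori lie in $H_0^2(\Omega)=\dom(\interior{\He})$ -- is a fair one; the paper simply writes ``It follows that $\psi\in\dom(\interior{\He})$'' without elaboration. The zero-extension / Poincar\'e-lemma route you sketch is a reasonable way to close it, though it relies on connectedness of $\mathbb{R}^3\setminus\Omega$ and on enough boundary regularity to pass from ``vanishes outside $\overline{\Omega}$'' back to $H_0^2(\Omega)$, i.e.\ the same kind of hypotheses under which one knows $(\interior{\grad},\interior{\curl})$ is exact and closed in the first place.
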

 
 \begin{lemma}\label{lem:heh2} Let $\Omega\subseteq \mathbb{R}^3$ open and bounded. Then the graph norm of $\interior{\He}$ and the $H^2$-norm are equivalent on $H^2_0(\Omega)$.  
 \end{lemma}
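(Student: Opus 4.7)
The plan is to observe that the nontrivial direction reduces to two applications of Poincar\'e's inequality. The graph norm of $\interior{\He}$ on $H^2_0(\Omega)$ is $\|\phi\|_{L^2}^2+\sum_{i,j}\|\partial_i\partial_j\phi\|_{L^2}^2$, and this is trivially dominated by $\|\phi\|_{H^2}^2$. For the reverse inequality I need to control the first-order derivatives and $\phi$ itself by $\|\interior{\He}\phi\|_{L^2}$.

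First I would note that by the very definition of $H^2_0(\Omega)$ as the closure of $C_c^\infty(\Omega)$ in the $H^2$-norm, every $\phi\in H^2_0(\Omega)$ lies in $H^1_0(\Omega)$ and each $\partial_i\phi$ lies in $H^1_0(\Omega)$ as well. Since $\Omega$ is bounded, Poincar\'e's inequality applies to $H^1_0(\Omega)$: there is a constant $C_P>0$ such that $\|u\|_{L^2(\Omega)}\leq C_P\|\grad u\|_{L^2(\Omega)^3}$ for all $u\in H^1_0(\Omega)$.

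Applying Poincar\'e to $u=\partial_i\phi$ yields
\[
   \|\partial_i\phi\|_{L^2}^2\leq C_P^2\sum_{j=1}^3\|\partial_j\partial_i\phi\|_{L^2}^2,
\]
and summing over $i\in\{1,2,3\}$ gives $\|\grad\phi\|_{L^2}^2\leq C_P^2\|\interior{\He}\phi\|_{L^2}^2$. A second application of Poincar\'e to $\phi$ itself then yields $\|\phi\|_{L^2}^2\leq C_P^4\|\interior{\He}\phi\|_{L^2}^2$. Combining these three estimates produces a constant $\widetilde{C}>0$ with
\[
   \|\phi\|_{H^2}^2=\|\phi\|_{L^2}^2+\|\grad\phi\|_{L^2}^2+\sum_{i,j}\|\partial_i\partial_j\phi\|_{L^2}^2\leq \widetilde{C}\bigl(\|\phi\|_{L^2}^2+\|\interior{\He}\phi\|_{L^2}^2\bigr),
\]
which is the required inequality.

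There is no real obstacle here; the only point requiring a moment's care is the justification that $\partial_i\phi\in H^1_0(\Omega)$ whenever $\phi\in H^2_0(\Omega)$, which follows immediately by approximating $\phi$ by a sequence $(\phi_n)_n$ in $C_c^\infty(\Omega)$ in the $H^2$-norm: then $(\partial_i\phi_n)_n$ is Cauchy in $H^1(\Omega)$ with limit $\partial_i\phi$, and each $\partial_i\phi_n\in C_c^\infty(\Omega)\subseteq H^1_0(\Omega)$, so the limit lies in $H^1_0(\Omega)$ as well.
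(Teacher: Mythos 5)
Your proof is correct and follows essentially the same route as the paper: two successive applications of Poincar\'e's inequality, first to the partial derivatives $\partial_i\phi$ and then to $\phi$ itself, to bound the lower-order terms of the $H^2$-norm by $\|\interior{\He}\phi\|_{L^2}$. The only cosmetic difference is that the paper proves the estimate for $\phi\in C_c^\infty(\Omega)$ and then extends by density, whereas you work directly on $H^2_0(\Omega)$ after observing that $\partial_i\phi\in H^1_0(\Omega)$; both are equally valid.
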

 \begin{proof}
   Let $\phi \in C_c^\infty(\Omega)$. Then we have for all $i,j\in\{1,2,3\}$
   \[
       \| \phi\|_{L^2(\Omega)}\leq c\| \partial_j \phi \|_{L^2(\Omega)}\leq c^2 \| \partial_i\partial_j \phi \|_{L^2(\Omega)}\leq c^2 \| \interior{\He}\, \phi \|_{L^2(\Omega)}\leq c^2\|\phi\|_{H^2(\Omega)}
   \]
  where the last and second last inequalities are trivial and the first and the second one follow from Poincar\'e's inequality for some suitable constant $c>0$. Thus, the graph norm of $\interior{\He}$ and the $H^2$-norm are equivalent on $C_c^\infty(\Omega)$. Since $H_0^2(\Omega)=\overline{C_c^\infty(\Omega)}^{H^2}$ we obtain the assertion.
 \end{proof}
 
 Note that Lemma \ref{lem:heh2} particularly implies that $\interior{\He}$ is closed and that $C_c^\infty(\Omega)$ is an operator core for $\interior{\He}$.
  
 \begin{proof}[Proof of Theorem \ref{thm:gghe}] Let us start with the complex property. So, let $\phi\in C_c^\infty(\Omega)$. Then
 \[
    \interior{\Curl}_{\sym}\interior{\He}\phi = \begin{pmatrix} \interior{\curl} (\interior{\He}\phi_{1j})_{j\in\{1,2,3\}} \\ \interior{\curl} (\interior{\He}\phi_{2j})_{j\in\{1,2,3\}} \\ \interior{\curl} (\interior{\He}\phi_{3j})_{j\in\{1,2,3\}} \end{pmatrix} =
    \begin{pmatrix} \interior{\curl} (\interior{\grad}\partial_1\phi) \\ \interior{\curl} (\interior{\grad}\partial_2\phi) \\ \interior{\curl} (\interior{\grad}\partial_3\phi) \end{pmatrix} = 0.
 \] By Lemma \ref{lem:heh2}, $C_c^\infty(\Omega)$ is an operator core for $\interior{\He}$. Thus, $\rge(\interior{\He})\subseteq \kar(\interior{\Curl}_{\sym})$.
 
 Next, we show closedness of the complex. For this we note that $H_0^2(\Omega)$ embeds compactly into $L^2(\Omega)$. Hence, by Lemma \ref{lem:heh2}, we deduce that $\rge(\interior{\He})$ is closed. 
 
 Since $\interior{\curl}$ has closed range -- by the closed graph theorem --  we find $c>0$ such that for all $\phi \in \dom(\interior{\curl})\cap \kar(\interior{\curl})^\bot$ we have
 \[
    \| \phi\|^2_{L^2}\leq c \|\interior{\curl}\phi\|_{L^2}^2.
 \] It is easy to see that $\kar(\interior{\Curl}_{\sym})=\kar(\interior{\curl})^3\cap L_{\sym}^2(\Omega)^{3\times 3}$. Hence, if $\Phi\in \dom(\interior{\Curl}_{\sym})\cap \kar( \interior{\Curl}_{\sym})^{\bot}$ we obtain
 \[
   \| \Phi\|_{ L_{\sym}^2}^2 =\sum_{i\in\{1,2,3\}} \| (\Phi_{ij} )_{j\in\{1,2,3\}}\|^2 \leq \sum_{i\in\{1,2,3\}} c\| \interior{\curl}\,(\Phi_{ij} )_{j\in\{1,2,3\}}\|^2 = c \|\interior{\Curl}_{\sym}\Phi\|^2_{L^2}.
 \]
 This yields closedness of the range of $\interior{\Curl}_{\sym}$.
 
 We are left with showing the exactness of the complex. More precisely, it remains to prove 
 \[
   \rge(\interior{\He})\supseteq \kar(\interior{\Curl}_{\sym}).
 \]
 For this let $\Phi\in \kar(\interior{\Curl}_{\sym})$. Then there exists $\phi_i \in \dom(\interior{\grad})$ such that $\interior{\grad}\, \phi_i = (\Phi_{ij})_{j\in\{1,2,3\}}$ for all $i\in\{1,2,3\}$ since $(\interior{\grad},\interior{\curl})$ is exact. Since $\Phi$ is symmetric, we deduce that $\partial_j\phi_i = \partial_i \phi_j$. Thus, $(\phi_{i})_{i\in\{1,2,3\}} \in \kar(\curl)\cap \dom(\interior{\grad})^3\subseteq \kar(\interior{\curl})$. Hence, by the exactness of $(\interior{\grad},\interior{\curl})$ we find $\psi\in \dom(\interior{\grad})$ such that $\interior{\grad}\psi = (\phi_{i})_{i\in\{1,2,3\}} $. It follows that $\psi\in \dom(\interior{\He})$. Moreover, we obtain
 \[
    \interior{\He}\, \psi =  \big((\interior{\grad}\, \partial_i \psi)^T\big)_{i\in \{1,2,3\}} = 
    \big((\interior{\grad}\, \phi_i)^T\big)_{i\in \{1,2,3\}} = \Phi.
 \]
 This shows the assertion.
 \end{proof}
 
 \begin{remark}
 It is an easy exercise to show that $\interior{\Curl}_{\sym}^*= \sym {\Curl}_{\trf}$, where $\sym M=\tfrac12(M+M^T)$ and $\Curl_{\trf}$ is the distributional row-wise curl operator with no boundary conditions acting on trace-free matrices. It is remarkable that the equations for the description of nonlocal $H$-convergence of $(a_n)_n$ to some $a$ with respect to the complex $(\interior{\He},\interior{\Curl}_{\sym})$ are of different order. Indeed, one equation is
 \[
      \langle a_n \Grad \grad u_n , \Grad\grad \phi \rangle =  f(\phi)
       \]
       for suitable test functions $\phi$ and given right-hand side $f$. This corresponds to the 4th order equation mentioned above. The second variational problem reads
\[
       \langle a_n^{-1} \sym {\Curl}_{\trf} v_n , \sym {\Curl}_{\trf} \psi \rangle = g(\psi),
\]
which leads to a 2nd order problem, only.
\end{remark}

\subsection{An $H$-compactness result for Riemannian manifolds} 

The general setting developed in the previous sections allows for $H$-compactness results also on manifolds. We shall refer to the fairly recent result in \cite{Hoppe2017}, where the corresponding local problem has been discussed. 

We refer to \cite{Weck1974} or \cite{Picard1984} for the precise functional analytic setting to be sketched below.

Let $\Lambda$ be a $d$-dimensional $C^\infty$-manifold and let $\Omega\subseteq \Lambda$ be an open submanifold of $\Lambda$. For any $q\in \{0,\ldots, n\}$ this induces $L_q^2(\Omega)$, the completion of the space of compactly supported $q$-forms on $\Omega$ endowed with the scalar product
\[
    \langle \omega,\eta \rangle = \int_{\Omega} \omega \land *\eta,
\]
where $*$ denotes the Hodge duality and $\land$ the alternating product. 

Next, using the thus defined scalar product, we let $\mathrm{d}$ be the (distributional) exterior derivative on $L_q^2(\Omega)$ with values in the $L_{q+1}^2(\Omega)$. The adjoint of this operator is set to be $\interior{\delta}$. Similarly, we let $\interior{\mathrm{d}}$ be the closure of $\mathrm{d}$ restricted to $C^1$-forms with compact support in $\Omega$; with adjoint $\delta$. In order to stress the dimension of the underlying spaces, we write $\mathrm{d}_{q\to q+1}$  (similarly for other operators). 

Assume that $\big(\interior{\mathrm{d}}_{q\to q+1}, \interior{\mathrm{d}}_{q+1\to q+2}\big)$ is exact and compact. Then the translation of our compactness theorem  to the present setting reads as follows.

\begin{theorem} Let $(a_n)_n$ be a sequence in $\mathcal{M}(\alpha,\beta,(\interior{\mathrm{d}}_{q\to q+1}, \interior{\mathrm{d}}_{q+1\to q+2}))$. 

Then there is a convergent subsequence of $(a_n)_n$, which $H$-nonlocally converges with respect to $(\interior{\mathrm{d}}_{q\to q+1}, \interior{\mathrm{d}}_{q+1\to q+2})$.

The nonlocal $H$-limit is unique.
\end{theorem}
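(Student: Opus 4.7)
The plan is to reduce the assertion entirely to the abstract compactness and uniqueness results developed in Section~\ref{sec:metco}, applied to the concrete closed exact complex $(A_0,A_1) = (\interior{\mathrm{d}}_{q\to q+1}, \interior{\mathrm{d}}_{q+1\to q+2})$ on the Hilbert space $H_1 = L^2_{q+1}(\Omega)$. By the standing assumption that this complex is compact and exact, Proposition~\ref{prop:comelm}(e) yields that it is also closed, so the machinery from Sections~\ref{sec:nonH}--\ref{sec:metco} applies verbatim.

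The main inputs will be Theorem~\ref{thm:met} and Corollary~\ref{cor:Hu}. To invoke Theorem~\ref{thm:met}, I first check that $L^2_{q+1}(\Omega)$ is separable: by its very definition it is the completion of the space of compactly supported smooth $(q+1)$-forms under the scalar product induced by $\int_\Omega \omega \wedge *\eta$, and this pre-Hilbert space is separable. Next, assuming (implicitly, since $\mathcal{M}(\alpha,\beta,\cdot)$ is not a priori a norm-bounded set, as evidenced by the first remark after Theorem~\ref{thm:met}) that $(a_n)_n$ is bounded in $L(L^2_{q+1}(\Omega))$, the remark following Theorem~\ref{thm:Hcom} furnishes constants $\alpha' = \alpha$ and $\beta' \geq \beta$ with $(a_n)_n \subseteq \mathcal{M}_1(\alpha',\beta',(A_0,A_1))$.

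Applying Theorem~\ref{thm:met} to this enlarged set, I extract a subsequence that $\tau_H$-converges to some $a \in \mathcal{M}_1(\alpha',\beta',(A_0,A_1))$; by Theorem~\ref{thm:chH4} this is precisely nonlocal $H$-convergence to $a$. It then remains to verify that the limit lies in $\mathcal{M}(\alpha,\beta,(A_0,A_1))$ with the original constants. Using the block-matrix identity $(a^{-1})_{11}^{-1} = a_{11} - a_{10} a_{00}^{-1} a_{01}$ from Lemma~\ref{lem:boinv}(b) together with the weak-operator convergences $a_{n,00}^{-1} \to a_{00}^{-1}$ and $a_{n,11} - a_{n,10} a_{n,00}^{-1} a_{n,01} \to a_{11} - a_{10} a_{00}^{-1} a_{01}$ that are built into $\tau_H$-convergence, Lemma~\ref{lem:conpd}(b),(d) transports the coercivity bounds $\Re a_{n,00} \geq \alpha$, $\Re a_{n,00}^{-1} \geq 1/\beta$, $\Re (a_n^{-1})_{11} \geq 1/\beta$, and $\Re (a_n^{-1})_{11}^{-1} \geq \alpha$ to the limit, giving $a \in \mathcal{M}(\alpha,\beta,(A_0,A_1))$ as required.

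Uniqueness of the nonlocal $H$-limit is then immediate from Corollary~\ref{cor:Hu}, which was already proved at the abstract level via Theorem~\ref{thm:chH}. The only genuine subtlety is the implicit boundedness of $(a_n)_n$ in operator norm: because membership in $\mathcal{M}(\alpha,\beta,\cdot)$ controls the diagonal block $a_{00}$ (via $\|a_{00}\| \leq \beta$ from Lemma~\ref{lem:conpd}(a) applied to $a_{00}^{-1}$) but leaves $a_{01}$, $a_{10}$, and $a_{11}$ a priori unbounded, the reduction to $\mathcal{M}_1(\alpha',\beta',\cdot)$ requires this additional assumption. With that caveat acknowledged, the Riemannian setting contributes nothing beyond the verification that the abstract closed-complex framework is applicable, and the proof is essentially a direct transcription of Theorem~\ref{thm:met} and Corollary~\ref{cor:Hu}.
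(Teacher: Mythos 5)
Your proof is correct and follows the paper's intended approach: since $(\interior{\mathrm{d}}_{q\to q+1}, \interior{\mathrm{d}}_{q+1\to q+2})$ is assumed compact and exact, the theorem is a direct instantiation of Theorem \ref{thm:met} (compactness/metrisability) and Corollary \ref{cor:Hu} (uniqueness), and the paper itself introduces it precisely as a ``translation of our compactness theorem.'' Your flag on operator-norm boundedness is well taken and worth recording: Theorem \ref{thm:met} only gives sequential compactness of $\mathcal{M}_1(\alpha,\beta,(A_0,A_1))$, whereas $\mathcal{M}(\alpha,\beta,(A_0,A_1))$ is not a norm-bounded subset of $L(H_1)$ (the defining inequalities control only $a_{00}$, $a_{00}^{-1}$, $(a^{-1})_{11}$, $(a^{-1})_{11}^{-1}$, leaving $a_{01}$, $a_{10}$, $a_{11}$ a priori uncontrolled), so the statement tacitly presupposes that $(a_n)_n$ is norm-bounded, as is made explicit whenever this compactness device is applied elsewhere in the paper (e.g.\ Example \ref{ex:mcv2}); your passage via $\mathcal{M}_1(\alpha',\beta',\cdot)$ followed by the constant-recovery argument using Lemma \ref{lem:conpd}(d) is the right way to repair it.
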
 

It might be worth explicitly writing down the definition of nonlocal $H$-convergence in this particular setting.

$(a_n)_n$ nonlocally $H$-converges to some $a$ with respect to $(\interior{\mathrm{d}}_{q\to q+1}, \interior{\mathrm{d}}_{q+1\to q+2})$, if and only if for all $f \in \dom(\interior{\mathrm{d}}_{q\to q+1}|_{\kar(\interior{\mathrm{d}}_{q\to q+1})^{\bot}})^*$ and $g\in \dom(\delta_{q+2\to q+1}|_{\kar(\delta_{q+2\to q+1})^{\bot}})^*$
 and corresponding $u_n$ and $v_n$ such that
\[
    \langle a_n \interior{\mathrm{d}}u_n , \interior{\mathrm{d}}\phi \rangle = f(\phi)\text{ and }    \langle a_n^{-1}
     \delta v_n , \delta \phi \rangle = g(\psi)
\]
for $\phi \in \dom(\interior{\mathrm{d}}_{q\to q+1}|_{\kar(\interior{\mathrm{d}}_{q\to q+1})^{\bot}})$ and $\psi\in \dom(\delta_{q+2\to q+1}|_{\kar(\delta_{q+2\to q})^{\bot}})$
implies $u_n \rightharpoonup u$ and $v_n\rightharpoonup v$ as well as $ a_n \mathrm{d} u_n \rightharpoonup a \mathrm{d}u$ and $a_n^{-1} \delta  v_n \rightharpoonup a^{-1}\delta v$, where $u$ and $v$ satisfy
\[
  \langle a \mathrm{d}u , \mathrm{d}\phi \rangle = f(\phi)\text{ and }    \langle a^{-1}
    \delta v , \delta \psi \rangle = g(\psi).
\]
Of course also the div-curl type characterisation in Theorem~\ref{thm:chH3} carries over to the special case discussed here.

\section{Concluding Remarks}

The present article discussed a generalisation of the well-known concept of $H$-convergence for nonlocal operator coefficients. This generalised concept, nonlocal $H$-convergence, yields another characterisation of local $H$-convergence for local operators. In particular, this automatically implies convergence of a different partial differential equation (in an appropriate sense). With this observation in mind we realise that local $H$-convergence of multiplication operators readily implies a homogenisation result for Maxwell's equations with nonperiodic coefficients.  

Yet there is more, the rationale in this article provides a way of identifying the nonlocal coefficients completely from certain solution operators of elliptic partial differential equations. Indeed, the above arguments imply that knowledge of the solution operator associated to 
\[
   \dive a \interior{\grad} u = f, \text{ and }\interior{\curl} a^{-1} \curl v = g
\]
together with all the `fluxes'
\[
  a \interior{\grad} u\text{ and } a^{-1} \curl v
\]
for sufficiently many $f$ and $g$ are needed to uniquely identify $a$. The above arguments also show that in order to use less data than the mentioned ones, more information on $a$ has to be assumed a priori.

\section*{Acknowledgements}
The author is highly indebted to the anonymous referee for their thorough and critical review, which lead to a major improvement of the manuscript.
\bigskip

\bibliographystyle{plain}

\begin{thebibliography}{10}

\bibitem{Barbatis2003}
G.~Barbatis and I.~G. Stratis.
\newblock Homogenization of {M}axwell's equations in dissipative bianisotropic
  media.
\newblock {\em Math. Methods Appl. Sci.}, 26(14):1241--1253, 2003.

\bibitem{Bauer2016}
Sebastian Bauer, Dirk Pauly, and Michael Schomburg.
\newblock The {M}axwell compactness property in bounded weak {L}ipschitz
  domains with mixed boundary conditions.
\newblock {\em SIAM J. Math. Anal.}, 48(4):2912--2943, 2016.

\bibitem{Bensoussan1978}
Alain Bensoussan, Jacques-Louis Lions, and George Papanicolaou.
\newblock {\em Asymptotic analysis for periodic structures}, volume~5 of {\em
  Studies in Mathematics and its Applications}.
\newblock North-Holland Publishing Co., Amsterdam-New York, 1978.

\bibitem{Buffa2002}
A.~Buffa, M.~Costabel, D.~Sheen.
\newblock On traces for $H(\curl,\Omega)$ in Lipschitz domains,
\newblock {\em  J. Math. Anal. Appl.} 276(2): 845--867, 2002.

\bibitem{Carillo18}
J. A. Carrillo, R. S. Gvalani, G. A. Pavliotis, A. Schlichting
\newblock Long-time behaviour and phase transitions for the McKean--Vlasov equation on the torus.
\newblock arXiv:1806.01719v2

\bibitem{Chen2017}
Gui-Qiang~G. Chen and Siran Li.
\newblock Global weak rigidity of the gauss--codazzi--ricci equations and
  isometric immersions of riemannian manifolds with lower regularity.
\newblock {\em The Journal of Geometric Analysis}, Aug 2017.

\bibitem{CW17_1D}
K.~Cherednichenko and M.~Waurick.
\newblock Resolvent estimates in homogenisation of periodic problems of
  fractional elasticity.
\newblock {\em {Journal of Differential Equations}}, 264(6):3811--3835, 2018.

\bibitem{Cherednichenko2006}
K.~D. Cherednichenko, V.~P. Smyshlyaev, and V.~V. Zhikov.
\newblock Non-local homogenized limits for composite media with highly
  anisotropic periodic fibres.
\newblock {\em Proceedings of the Royal Society of Edinburgh: Section A
  Mathematics}, 136(1):87–114, 2006.

\bibitem{Ciattoni2015}
Alessandro Ciattoni and Carlo Rizza.
\newblock Nonlocal homogenization theory in metamaterials: Effective
  electromagnetic spatial dispersion and artificial chirality.
\newblock {\em Phys. Rev. B}, 91:184207, May 2015.

\bibitem{Cioranescu1999}
D.~Cioranescu and P.~Donato.
\newblock {\em An introduction to homogenization}, volume~17 of {\em Oxford
  Lecture Series in Mathematics and its Applications}.
\newblock The Clarendon Press, Oxford University Press, New York, 1999.

\bibitem{CW17_FH}
S.~Cooper and M.~Waurick.
\newblock Fibre homogenisation.
\newblock Technical report, U Bath, 2017.

\bibitem{Du2016}
Qiang Du, Robert Lipton, and Tadele Mengesha.
\newblock Multiscale analysis of linear evolution equations with applications
  to nonlocal models for heterogeneous media.
\newblock {\em ESAIM Math. Model. Numer. Anal.}, 50(5):1425--1455, 2016.

\bibitem{Evgrafov2018}
A.~Evgrafov and J.C.~Bellido
\newblock From nonlocal Eringen's model to fractional elasticity.
\newblock arXiv:1806.03906

\bibitem{Fish2002}
Jacob Fish, Wen Chen, and Gakuji Nagai.
\newblock Non-local dispersive model for wave propagation in heterogeneous
  media: multi-dimensional case.
\newblock {\em Internat. J. Numer. Methods Engrg.}, 54(3):347--363, 2002.

\bibitem{FW17_1D}
S.~Franz and M.~Waurick.
\newblock Resolvent estimates and numerical implementation for the
  homogenisation of one-dimensional periodic mixed type problems.
\newblock {\em ZAMM}, 98(7): 1284--1294, 2018.


\bibitem{Gorlach2016}
Maxim~A. Gorlach, Tatiana~A. Voytova, Mikhail Lapine, Yuri~S. Kivshar, and
  Pavel~A. Belov.
\newblock Nonlocal homogenization for nonlinear metamaterials.
\newblock {\em Phys. Rev. B}, 93:165125, Apr 2016.

\bibitem{Hoppe2017}
Helmer Hoppe, Jun Masamune, and Stefan Neukamm.
\newblock {$H$-compactness of elliptic operators on weighted Riemannian
  Manifolds}.
\newblock Technical report, 2017.
\newblock arXiv:1710.09352.

\bibitem{Jikov1994}
V.~V. Jikov, S.~M. Kozlov, and O.~A. Oleinik.
\newblock {\em Homogenization of differential operators and integral
  functionals}.
\newblock Springer-Verlag, Berlin, 1994.

\bibitem{KPSTW14_OD}
A.~Kalauch, R.~Picard, S.~Siegmund, S.~Trostorff, and M.~Waurick.
\newblock {A Hilbert Space Perspective on Ordinary Differential Equations with
  Memory Term}.
\newblock {\em {Journal of Dynamics and Differential Equations}},
  26(2):369--399, 2014.

\bibitem{Keller2011}
O.~Keller.
\newblock {\em Quantum Theory of Near-Field Electrodynamics}.
\newblock Springer, Berlin Heidelberg, 2011.

\bibitem{Mendez2017}
J.~Flores Mendez, M.~Salazar Villanueva, Selene Hernandez-Rodriguez, and
  J.I.~Rodriguez Mora.
\newblock Dynamic homogenization in the nonlocal and local regimes for a
  phononic superlattice: Resonant elastic metamaterial.
\newblock {\em Results in Physics}, 7:1376 -- 1378, 2017.

\bibitem{Misur2017}
Marin Misur.
\newblock {\em {H-distributions and compactness by compensation}}.
\newblock PhD thesis, University of Zagreb, 2017.

\bibitem{Murat1978}
Fran\c{c}ois Murat.
\newblock Compacit\'e par compensation.
\newblock {\em Ann. Scuola Norm. Sup. Pisa Cl. Sci. (4)}, 5(3):489--507, 1978.

\bibitem{Murat1997}
Fran\c{c}ois Murat and Luc Tartar.
\newblock {$H$}-convergence.
\newblock In {\em Topics in the mathematical modelling of composite materials},
  volume~31 of {\em Progr. Nonlinear Differential Equations Appl.}, pages
  21--43. Birkh\"auser Boston, Boston, MA, 1997.

\bibitem{Pastukhova2016}
S.~E. Pastukhova.
\newblock Homogenization estimates of operator type for fourth order elliptic
  equations.
\newblock {\em Algebra i Analiz}, 28(2):204--226, 2016.

\bibitem{Pauly2017}
Dirk Pauly.
\newblock {A Global div-curl-Lemma for Mixed Boundary Conditions in Weak
  Lipschitz Domains and a Corresponding Generalized $A^*_0$-$A_1$-Lemma in
  Hilbert Spaces}.
\newblock Technical report, University of Duisburg--Essen, 2017.
\newblock arXiv:1707.00019.

\bibitem{Pauly2016}
Dirk Pauly and Walter Zulehner.
\newblock {On Closed and Exact Grad-grad- and div-Div-Complexes, Corresponding
  Compact Embeddings for Tensor Rotations, and a Related Decomposition Result
  for Biharmonic Problems in 3D}.
\newblock Technical report, 2016.

\bibitem{Piatnitski2017}
A.~Piatnitski and E.~Zhizhina.
\newblock Periodic homogenization of nonlocal operators with a convolution-type
  kernel.
\newblock {\em SIAM J. Math. Anal.}, 49(1):64--81, 2017.

\bibitem{Picard82}
R.~Picard.
\newblock On the boundary value problems of electro- and magnetostatics.
\newblock {\em Proc. Roy. Soc. Edinburgh Sect. A}, 92(1-2):165--174, 1982.

\bibitem{Picard1984}
R.~Picard.
\newblock An elementary proof for a compact imbedding result in generalized
  electromagnetic theory.
\newblock {\em Math. Z.}, 187(2):151--164, 1984.

\bibitem{P84}
R.~Picard
\newblock On the low frequency asymptotics in electromagnetic theory. 
\newblock {\em J. Reine Angew. Math.} 354 (1984), 50--73. 

\bibitem{PicPhy}
R.~Picard.
\newblock {A structural observation for linear material laws in classical
  mathematical physics}.
\newblock {\em {Mathematical Methods in the Applied Sciences}}, 32:1768--1803,
  2009.

\bibitem{PTW15_FI}
R.~Picard, S.~Trostorff, and M.~Waurick.
\newblock {On Evolutionary Equations with Material Laws Containing Fractional
  Integrals}.
\newblock {\em {Math. Meth. Appl. Sci.}}, 38(15):3141--3154, 2015.

\bibitem{Quenneville-Bair2015}
V.~Quenneville-Bair.
\newblock {\em {A New Approach to Finite Element Simulation of General
  Relativity}}.
\newblock PhD thesis, APAM, Columbia University, University of Minnesota,
  Minneapolis, USA, 2015.

\bibitem{Sjoeberg2005}
Daniel Sj\"oberg, Christian Engstr\"om, Gerhard Kristensson, David J.~N. Wall,
  and Niklas Wellander.
\newblock A {F}loquet-{B}loch decomposition of {M}axwell's equations applied to
  homogenization.
\newblock {\em Multiscale Model. Simul.}, 4(1):149--171, 2005.

\bibitem{Suslina2008}
T. A.~Suslina,
\newblock {Averaging of the stationary periodic Maxwell system taking a corrector into account.}
\newblock {\em St. Petersburg Math. J.} 19(3), 455--494, 2008.

\bibitem{Tartar1989}
Luc Tartar.
\newblock Nonlocal effects induced by homogenization.
\newblock In {\em Partial differential equations and the calculus of
  variations, {V}ol.\ {II}}, volume~2 of {\em Progr. Nonlinear Differential
  Equations Appl.}, pages 925--938. Birkh\"auser Boston, Boston, MA, 1989.

\bibitem{Tartar1997}
Luc Tartar.
\newblock Estimations of homogenized coefficients.
\newblock In {\em Topics in the mathematical modelling of composite materials},
  volume~31 of {\em Progr. Nonlinear Differential Equations Appl.}, pages
  9--20. Birkh\"auser Boston, Boston, MA, 1997.

\bibitem{Tartar2009}
Luc Tartar.
\newblock {\em The general theory of homogenization. A personalized introduction.} Volume~7 of {\em Lecture
  Notes of the Unione Matematica Italiana}.
\newblock Springer-Verlag, Berlin; UMI, Bologna, 2009.

\bibitem{EGW17_D2N}
A.F.M. ter Elst, G.~Gorden, and M.~Waurick.
\newblock {The Dirichlet-to-Neumann operator for divergence form problems}.
\newblock {\em Annali di Matematica Pura ed Applicata}, 2018, doi:10.1007/s10231-018-0768-2.

\bibitem{TW14_FE}
S.~Trostorff and M.~Waurick.
\newblock {A note on elliptic type boundary value problems with maximal
  monotone relations}.
\newblock {\em {Mathematische Nachrichten}}, 287(13):1545--1558, 2014.

\bibitem{Tsukerman2017}
Igor Tsukerman.
\newblock Classical and non-classical effective medium theories: new
  perspectives.
\newblock {\em Phys. Lett. A}, 381(19):1635--1640, 2017.

\bibitem{W12_HO}
M.~Waurick.
\newblock {A Hilbert Space Approach to Homogenization of Linear Ordinary
  Differential Equations Including Delay and Memory Terms}.
\newblock {\em {Mathematical Methods in the Applied Sciences}},
  35(9):1067--1077, 2012.

\bibitem{Waurick2014}
M.~Waurick.
\newblock {G-convergence of linear differential equations}.
\newblock {\em {Journal of Analysis and its Applications}}, 33(4):385--415,
  2014.

\bibitem{W14_FE}
M.~Waurick.
\newblock {Homogenization in fractional elasticity}.
\newblock {\em {SIAM J. Math. Anal.}}, 46(2):1551--1576, 2014.

\bibitem{W16_Gcon}
M.~Waurick.
\newblock G-convergence and the weak operator topology.
\newblock In {\em PAMM}, volume~16, pages 521--522, 2016.

\bibitem{W16_HPDE}
M.~Waurick.
\newblock {On the homogenization of partial integro-differential-algebraic
  equations}.
\newblock {\em {Operators and Matrices}}, 10(2):247--283, 2016.

\bibitem{W16_SH}
M.~Waurick.
\newblock {Stabilization via Homogenization}.
\newblock {\em {Applied Mathematics Letters}}, 60:101--107, 2016.

\bibitem{W17_DCL}
M.~Waurick.
\newblock A functional analytic perspective to the div-curl lemma.
\newblock {\em Journal of Operator Theory}, 80(1): 95--111, 2018.


\bibitem{Weck1974}
N.~Weck.
\newblock Maxwell's boundary value problem on {R}iemannian manifolds with
  nonsmooth boundaries.
\newblock {\em J. Math. Anal. Appl.}, 46:410--437, 1974.

\bibitem{Wellander2001}
Niklas Wellander.
\newblock Homogenization of the {M}axwell equations. {C}ase {I}. {L}inear
  theory.
\newblock {\em Appl. Math.}, 46(1):29--51, 2001.

\bibitem{Yvonnet2014}
J.~Yvonnet and G.~Bonnet.
\newblock A consistent nonlocal scheme based on filters for the homogenization
  of heterogeneous linear materials with non-separated scales.
\newblock {\em International Journal of Solids and Structures}, 51(1):196 --
  209, 2014.

\end{thebibliography}

\noindent
Marcus Waurick \\Department of Mathematics and Statistics, University of Strathclyde,\\
Livingstone Tower, 26 Richmond Street, Glasgow G1 1XH,\\
Scotland\\
Email: {\tt marcus.wau\rlap{\textcolor{white}{hugo@egon}}rick@strath.\rlap{\textcolor{white}{darmstadt}}ac.uk}

\end{document}